\numberwithin{equation}{section} 
\numberwithin{figure}{section} 
\theoremstyle{plain}
\newtheorem{theorem}{Theorem}
  \theoremstyle{plain}
  \theoremstyle{definition}
  \newtheorem{definition}[theorem]{Definition}
  \theoremstyle{plain}
  \newtheorem{lemma}[theorem]{Lemma}
  \theoremstyle{plain}
  \theoremstyle{remark}
  \newtheorem*{rem*}{Remark}
\def\AMS{{\it AMS subject classifications: }}
\providecommand{\assumptionname}{Assumption}
\newtheorem{assumption}{\assumptionname}{\bfseries}{\rmfamily}
\newcommand{\vect}[1]{\boldsymbol{#1}}
\newcommand{\tensor}[1]{\boldsymbol{#1}}
\def \ve{\varepsilon}
\def\longrightharpoonup{\relbar\joinrel\rightharpoonup}
\def\longleftharpoondown{\leftharpoondown\joinrel\relbar}
\def\longrightleftharpoons{
  \mathop{
    \vcenter{
      \hbox{
      \ooalign{
        \raise1pt\hbox{$\longrightharpoonup\joinrel$}\crcr
	  \lower1pt\hbox{$\longleftharpoondown\joinrel$}
	  }
      }
    }
  }
}
\numberwithin{equation}{section}
\title[Water transport in plant tissues]{Homogenization approach to water transport in plant tissues with periodic microstructures}
\author{Andr\'es Chavarr\'{\i}a-Krauser, Mariya Ptashnyk}
 \thanks{\noindent Center for Modelling and Simulation in the Biosciences \& Interdisciplinary Center for Scientific Computing, Universit\"at Heidelberg, INF 368, 69120 Heidelberg, Germany;   
 Department  of Mathematics, University of Dundee, Old Hawkhill,  Dundee DD1 4HN
Scotland, UK \\
andres.chavarria@bioquant.uni-heidelberg.de, mptashnyk@maths.dundee.ac.uk}
\begin{document}

\maketitle

\begin{abstract}
Water flow in plant tissues takes place in two different physical domains separated by semipermeable membranes: cell insides and cell walls. The assembly of all cell insides and cell walls are termed \emph{symplast} and \emph{apoplast}, respectively. Water transport is pressure driven in both, where osmosis plays an essential role in membrane crossing. In this paper, a microscopic model of water flow and transport of an osmotically active solute in a plant tissue is considered. The model is posed on the scale of a single cell and the tissue is assumed to be composed of periodically distributed cells. The flow in the symplast can be regarded as a viscous Stokes flow, while Darcy's law applies in the porous apoplast. Transmission conditions at the interface (semipermeable membrane) are obtained by balancing the mass fluxes through the interface and by describing the protein mediated transport as a surface reaction. Applying homogenization techniques, macroscopic equations for water and solute transport in a 
plant tissue are derived. The macroscopic problem is given by a Darcy law with a force term proportional to the difference in concentrations of the osmotically active solute in the symplast and apoplast; i.e. the flow is also driven by the local concentration difference and its direction can be different than the one prescribed by the pressure gradient.
\end{abstract}

{\it Key words:} 
{plant tissues,  osmotic pressure,  water flow,  homogenisation,  two-scale convergence,  flows in porous media}


\AMS {35B27,  35K61,  74Qxx,  76M50,  76S05,  76D07 }



\section{Introduction}

Plant tissues are in general composed of two domains separated by selective membranes: apoplast and symplast. The apoplast is composed of cell walls and intercellular spaces, while the symplast is constituted by all protoplasts which can be connected by plasmodesmata. Therefore, the path of water and solutes is threefold: apoplastic, symplastic and transcellular, \cite{steudle:1998,steudle:2000}. A first quantitative model of water transport in plants was proposed by van den Honert, \cite{honert:1948,tyree:1997}. The idea was to describe water flow in analogy to the flow of electric current through a resistor network. This phenomenological approach is still contemporary, \cite{Nobel:1999}, and is also used in engineering to describe water supply networks. Pressure assumes the role of the electric potential and, hence, pressure gradients produce a flux proportional to the hydraulic conductivity. This pressure driven flux was extended to include osmotically driven fluxes (diffusional fluxes) and the concept of 
water potential was introduced, \cite{Nobel:1999}. The general concept adapted from nonequilibrium thermodynamics is that, differences in water potential produce equilibrating forces which drive the water fluxes, \cite{tyree:1969}. This relation will be presented below in more detail. Plant biologists have used this concept to describe water uptake of single cells (Fig. \ref{fig:waterfluxcell}), e.g. during cell expansion \cite{Nobel:1999}.

\begin{figure}[t]
\centering
\includegraphics[width=13cm]{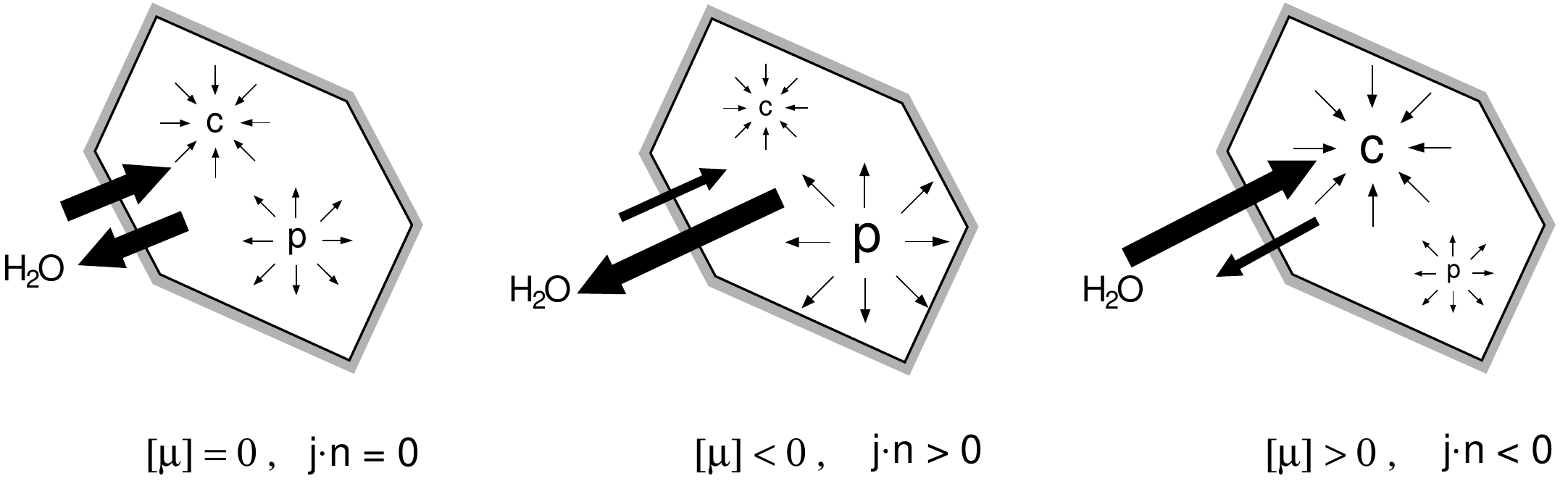}
 \caption{Scheme of water flux into and out of a single cell. The flux is proportional to the jump in chemical potential $[\mu]$, which depends on internal pressure $p$ and solute concentration $c$.}
\label{fig:waterfluxcell}
\end{figure}

Besides van den Honert's approach, less has been undertaken to extend and apply the concept for whole tissues. An interesting question is how the concepts should be used in continuum models of tissues. As will become clear later, the central problem is to find suitable transmission conditions, which describe the fluxes through the plasma membranes, and thus, between the apoplast and symplast. Another interesting task is to obtain simplified models for situations where the cell scale is small compared to the tissue or organ considered. Take for example functional structural plant models, in which often macroscopic sections of organs are combined and simulated, \cite{vos:2010}. Whilst less complexity is the principle of a simplified model, sufficient information on the microstructure should still be part of it. Fortunatelly, plant tissues tend to be sufficiently periodic and periodic homogenization lends to treat the problem, \cite{cioranescu:1999}.\\

\begin{figure}[b]
\centering
\includegraphics[width=8cm]{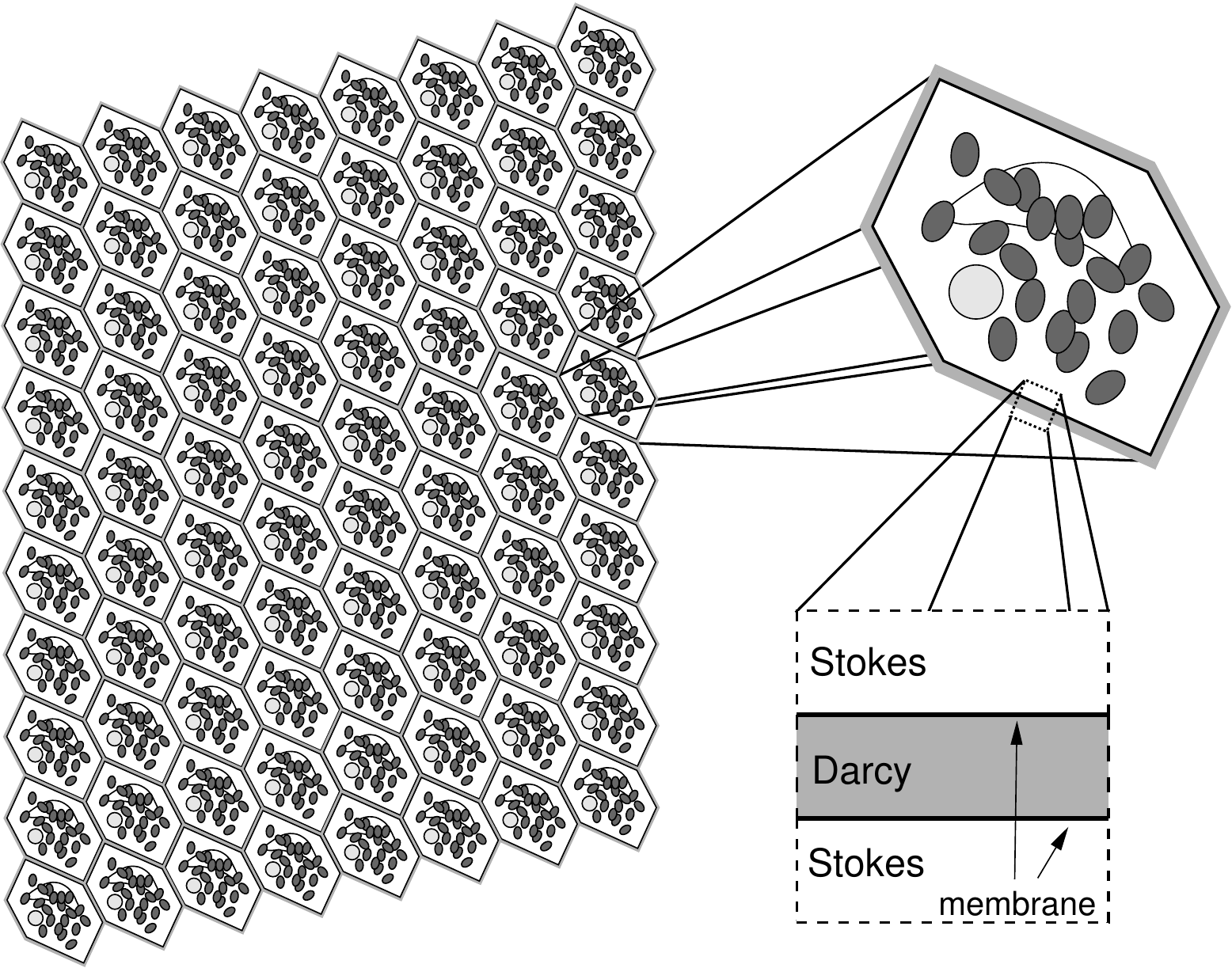}
 \caption{Scheme of a typical periodic plant tissue, for example lamellae of the moss \textit{Plagiomnium affine}.}
\label{fig:plant_tissue}
\end{figure}

A model of water and solute fluxes in plant tissues needs a precise description of transport processes between the cell inside (symplast) and the cell wall (apoplast) (Fig. \ref{fig:plant_tissue}). This transition takes place through semipermeable membranes and represents the connection between two different physical domains. These domains are the porous cell wall described best by a Darcy law and the cell cytoplasm in which a viscous flow applies. One of the main mathematical problems is that, in contrast to \cite{Mikelic_2000, Arbogast}, the free fluid and porous media
domains do not  interact directly, as the membrane separates the domains and controls actively and passively the fluxes of water and solutes. Thus the continuity  of normal forces and the  Beavers-Joseph-Saffman transmission condition between free fluid and porous medium do not apply. A similar situation to the one here is found in models of early atherosclerotic lesions \cite{calvez:2010} or low-density lipoproteins transfer through arterial walls, \cite{prosi:2005, sun:2007}. Kedem-Katchalsky transmission conditions are used by those authors to couple the Navier-Stokes equations for blood flow in the arterial lumen with a Darcy law in the arterial wall. However,  the dependence of fluid flow across the membrane on the oncotic pressure difference -- proportional to the jump in lipoproteins concentration -- was neglected by those authors to simplify the analysis of the model equations. Besides, transport of the solute in plant cells is mediated  by proteins and can take place opposite to the gradient in 
chemical potential by usage of energy (e.g. ATPase pumps) and has to be based on a surface reaction mechanism.\\

Coupled  free fluid and porous media problems have received an increasing attention during the last years from the mathematical and the numerical point of view. Well-posedness analysis and numerical algorithms for coupled Stokes-Darcy and Navier-Stokes-Darcy problems with  Beavers-Joseph-Saffman transmission condition between free fluid and porous medium   were addressed in \cite{Quarteroni_2009, Galvis, Layton2003} and references therein. 
Along many results on the homogenization of Stokes and Navier-Stokes problems and derivation of Darcy law, \cite{Allaire1989, hornung, Lipton, Mikelic_2000, Tartar1980}, 
the multiscale analysis for a Stokes-Dracy system modeling water flow in a vuggy porous media with Beavers-Joseph-Saffman transmission condition  was considered in  \cite{Arbogast}. The macroscopic equations were derived using formal asymptotic expansion and two-scale convergence method. A formal asymptotic expansion was also applied successfully to define  the transport velocity of auxin in a plant tissue, \cite{AndresMariya}.

New transmission conditions at the cell-membrane-cell wall interface and the coupling between the flow velocity and solute concentrations via transmission conditions reflecting the osmotic nature of the water flow through a semipermeable membrane distinguish our model from the problem studied in \cite{Arbogast}. Additional technical difficulties are introduced due to the distinction between  symplastic and apoplastic velocities and the presence of plasmodesmata populated cell wall pieces. To show the existence of a unique solution of the microscopic model we apply the abstract theory of mixed problems (saddle-point problems), \cite{Girault,Layton2003}, where the coercivity in the divergence-free space and the inf-sup condition ensure the existence of a unique solution of the coupled Stokes-Darcy model. 
The methods of the two-scale convergence and unfolding operator are used  to derive macroscopic equations for the fluid flow and for the transport of osmotic active solutes.
A generalized Darcy law with a force term given by  the difference of solute concentrations in apoplast and symplast defines the macroscopic water velocity.
Two initial-boundary-value problems for the concentration of solutes in  symplast and in apoplast, respectively,  coupled via ordinary differential equations for the transporter concentrations, describe the dynamics of solute concentrations in a plant tissue.\\


The paper is organized as follows. A thorough introduction to non-advective water fluxes is given. These concepts are then used to derive a biophysical model for transport of  water and osmotically active  solutes through a cell membrane, and to obtain transmission conditions between the symplast and apoplast (Section~\ref{PhysModel}).  Based on this biophysical model, a microscopic model for transport in a plant tissue is formulated (Section~\ref{MicroModel}). Well-posedness and a priori estimates for solutions of the microscopic model are shown (Section~\ref{A_priori_Estim}), followed by derivation of averaged macroscopic equations for water and solute transport defined on the scale of a plant tissue (Section~\ref{MacroModel}). At last, some  results on two-scale convergence and periodic unfolding method are formulated (Appendix).

\subsection{Water fluxes and chemical potential}

As noted above, water fluxes in cells are known to be driven by gradients in chemical potential, \cite{Nobel:1999}. Biologist use the concept of water potential $\Psi$, which is the chemical potential of water per unit specific volume -- i.e. expressed in pressure units instead of energy per particle. The chemical potential $\mu_i$ of a species $i$ is a measure of how much the internal energy of a thermodynamical system changes when the number of particles of species $i$ is varied, \cite{Landau:5}. In equilibrium, the entropy of the system is maximal and consequently the temperature $T$, pressure $p$ and the chemical potentials $\mu_i$ are constant in abscence of external fields. Consider now a situation in which the system is not in equilibrium. In this case $\mu_i$ will not be constant and diffusion fluxes arise, which drive the system into equilibrium. If the system is near to equilibrium, the diffusion fluxes can be assumed to be proportional to the gradients in chemical potentials $\nabla \mu_i$, \cite{
Landau:6}. Irreversible thermodynamics proposes that for small gradients, the diffusion mass flux density $\vect{j}_i$ of a species $i$ is given by a weighted sum of the gradients of all chemical potentials. 

Diffusion is a spontaneous molecular equilibration process which does not produce bulk flows in closed systems. The so called mass constraint applies when the system is not divided by a membrane, \cite{Giovangigli:1999}
\begin{equation}\label{eq:massconstraint}
\vect{j} = \sum_i \vect{j}_i = 0 \; ,
\end{equation}
which states that diffusion cannot produce a macroscopic movement of the mixture. Condition \eqref{eq:massconstraint} is not valid when the domain is separated by a semipermeable membrane, as such membranes allow the movement of the solvent but not of all solutes. A consequence is that in such a case the system tends to a local equilibrium: entropy reaches a local maximum and removing the membrane would allow a further increase. To account for semipermeable membranes, the concepts were extended to include a so called \emph{reflection coefficient}, \cite{Nobel:1999}. This coefficient is a measure of how much solutes are reflected by the membrane. A value of one means that all solutes are retained, while all solutes cross freely for a value of zero. The reflection coefficient lies in general between $0$ and $1$ for a real membrane, \cite{Nobel:1999}.

For the sake of simplicity, we will consider here and in the sequel the case of a binary mixture of a solvent ($i=1$) and a solute ($i=2$). Following the notation of \cite{Landau:6}, we introduce the mass fraction of the solute as a concentration
\begin{equation}
 c := \rho_2/ \rho \; ,
\end{equation}
where $\rho_i$, for $i=1,2$ is the mass density of species $i$ and $\rho=\rho_1+\rho_2$ the mass density of the mixture. Consequently the ``concentration'' of the solvent is $1-c$. This duality of the concentrations allows to introduce one chemical potential 
\begin{equation}
 \mu= \frac{\mu_2}{m_2} - \frac{\mu_1}{m_1}\; ,
\end{equation}
instead of two, \cite{Landau:6}, where $m_i$ is the mass of one particle of species $i$, for $i=1,2$. Note that $m_1$ and $m_2$ are needed to obtain a chemical potential density in units energy per mixture mass. The flux density of the solute is then $\vect{j}_2=-\alpha \nabla \mu$ for $\alpha > 0$. Using the mass constraint \eqref{eq:massconstraint}, delivers for that case the flux density $\vect{j}_1 = \alpha \nabla \mu$ of the solvent. The combined chemical potential cannot be used in the case of a semipermeable membrane and the approach needs to be extended by a reflection coefficient. For this purpose, we introduce two diffusion driving potentials
\begin{align}\label{eq:chempotentials}
  \begin{aligned}
    \tilde\mu_1 &:=\frac{\mu_1}{m_1} - (1-\varsigma)\frac{\mu_2}{m_2}\; & \text{and} \quad \tilde\mu_2 &:=(1-\varsigma)\left(\frac{\mu_2}{m_2} - \frac{\mu_1}{m_1}\right)\; , \\
    \tilde\mu_1 &=-(1-\varsigma)\mu + \varsigma\frac{\mu_1}{m_1}& \text{and} \quad \tilde\mu_2 &=(1-\varsigma)\mu\; ,
  \end{aligned}
\end{align}
where $0 \leq \varsigma \leq 1$ is a reflection coefficient. Setting $\varsigma=0$ renders $\tilde{\mu}_2 = -\tilde{\mu}_1 = \mu$, which is consistent with \cite{Landau:6}, while setting $\varsigma=1$ gives $\tilde\mu_1=\mu_1/m_1$ and $\tilde\mu_2 = 0$. The corresponding mass flux densities are simply
\begin{align}\label{eq:totalmassfluxgeneral}
  \begin{aligned}
    \vect{j}_i & =- \alpha\, \nabla \tilde\mu_i\; , &&\text{for } i=1,2\; ,\\
    \vect{j} & =- \alpha\, \nabla(\tilde{\mu}_1+ \tilde{\mu}_2) =- \alpha\, \varsigma\, \nabla \mu_1\; , \\
  \end{aligned}
\end{align}
where $\alpha > 0$ is a coefficient related to the permeability towards the solvent. The expression for $\vect{j}$ shows that reflection ($\varsigma \neq 0$) is a must to produce bulk fluxes, and is actually the mechanism exploited by plant cells, \cite{Nobel:1999}. Fig. \ref{fig:waterfluxcell} presents a scheme of water fluxes through a membrane of a single cell with a semipermeable membrane ($\varsigma=1$, $\tilde\mu_2 = 0$ and $\vect{j}_2=0$).\\

Often in mathematical models, diffusion is assumed to be driven only by concentration gradients, which is equivalent to setting $\tilde{\mu}_2 = \tilde{\mu}_2(c)$. For this case, Eqs. \eqref{eq:totalmassfluxgeneral} deliver Fick's law with reflection, \cite{Landau:6}
\begin{equation}\label{eq:fick}
\vect{j}_2 \approx -\rho (1-\varsigma)D\, \nabla c \; ,
\end{equation}
where $D > 0$ is the usual diffusion coefficient [see Eq. \eqref{eq:coefficients}]. This approach ignores that the chemical potential depends also on pressure $p$ and temperature $T$, which implies that gradients in $p$ and $T$ produce also fluxes. This dependency is normally assumed to be small, although the pressure term is known to be important in sedimentation processes, \cite{Vailati:1998}, and in cases where the pressure is nonharmonic and concentration gradients are small, \cite{Chavarria:2010}. Fick's law \eqref{eq:fick} is a fairly good approximation in a homogeneous domain, but is inapplicable for obtaining transmission conditions at semipermeable interfaces. Large pressure differences can arise across a membrane and the contributions of pressure and concentration driven diffusion are of comparable magnitude.\\


To account for pressure driven diffusion, the potentials $\tilde\mu_i$ are set to depend on concentration and pressure. For simplicity, the dependence on temperature is neglected, as large gradients are not usual in plant tissues. Eqs. \eqref{eq:totalmassfluxgeneral} render
\begin{align}\label{eq:generalfluxgradthree}
  \begin{aligned}
    \vect{j}_2 & = -\rho\,(1-\varsigma) \big(D\,\nabla c + G\, \nabla p\big)\; ,\\
    \vect{j} & = \rho\, \varsigma \big(\mathcal{D}\,\nabla c - \mathcal{G}\, \nabla p \big) \qquad \text{and}\\
    \vect{j}_1 & = -\vect{j}_2 + \vect{j}\; ,
    \end{aligned}
\end{align}
where
\begin{align}\label{eq:coefficients}
D & = \frac{\alpha}{\rho}\frac{\partial \mu}{\partial c} > 0\; , & G & = \frac{\alpha}{\rho}\frac{\partial \mu}{\partial p}\; , & \mathcal{D} & = -\frac{\alpha}{\rho}\frac{\partial \mu_1}{\partial c} > 0\; , & \mathcal{G} & = \frac{\alpha}{\rho}\frac{\partial \mu_1}{\partial p}>0 \; .
\end{align}
The diffusion coefficients $D$ and $\mathcal{D}$ are positive, while the barodiffusion coefficient $G$ has no definite sign, \cite{Landau:6}. Assuming incompressibility of the solvent has as a consequence that $\mathcal{G}$ is positive, \cite{Nobel:1999}. These signs concord with what is known from biology: diffusion fluxes across the mebrane follow the concentration gradient and are oriented against the pressure gradient (see Fig. \ref{fig:waterfluxcell}).

\section{Biophysical model}\label{PhysModel}
\subsection{Mass conservation}
The total flux density of a species and the mixture is given by the combination of the contributions of advection $\rho_i\, \vect{v}$, diffusion $\vect{j}_i$ and membrane transport via transporting proteins $\vect{a}_i$
\begin{align}
  \begin{aligned}
    \vect{J}_i & = \rho_i\, \vect{v} + \vect{j}_i + \vect{a}_i\; , \qquad i=1,2 \; ,\\
    \vect{J} & = \rho\, \vect{v} + \vect{j} + \vect{a}\; , 
  \end{aligned}
\end{align}
where $\vect{j} := \sum_i \vect{j}_i$ and $\vect{a}:=\sum_i \vect{a}_i$. Consequently, conservation of the species and total mass are given by
\begin{subequations}
\begin{align}
 \partial_t \rho_i + \operatorname{div}(\rho_i\, \vect{v} + \vect{j}_i + \vect{a}_i ) & = 0 \; , \qquad i=1,2 \; ,\label{eq:speciesconservation}\\
 \partial_t \rho + \operatorname{div}(\rho\, \vect{v} + \vect{j} + \vect{a} ) & = 0 \; . \label{eq:totalconservation}
\end{align}
\end{subequations}

First, consider the laws in a compartment without an interface. Transport via proteins takes place only on membranes. Therefore, $\vect{a}_i$ are zero inside a domain $\Omega$ that does not contain a membrane, and the mass constraint \eqref{eq:massconstraint} applies. Conservation of solute and mass in $\Omega$ follows thus
\begin{subequations}
\begin{align}
 \partial_t \rho_i + \operatorname{div}\big(\rho_i\, \vect{v} + \vect{j}_i\big) & = 0 \qquad \text{in}\ \Omega \times (0,\infty)\; , \qquad i=1,2 \; ,\label{eq:speciesconservationgeneral}\\
 \partial_t \rho + \operatorname{div}(\rho\, \vect{v}) & = 0 \qquad \text{in}\ \Omega \times (0,\infty)\; . \label{eq:totalconservationgeneral}
\end{align}
\end{subequations}

\begin{figure}[t]
\centering
\includegraphics[width=6cm]{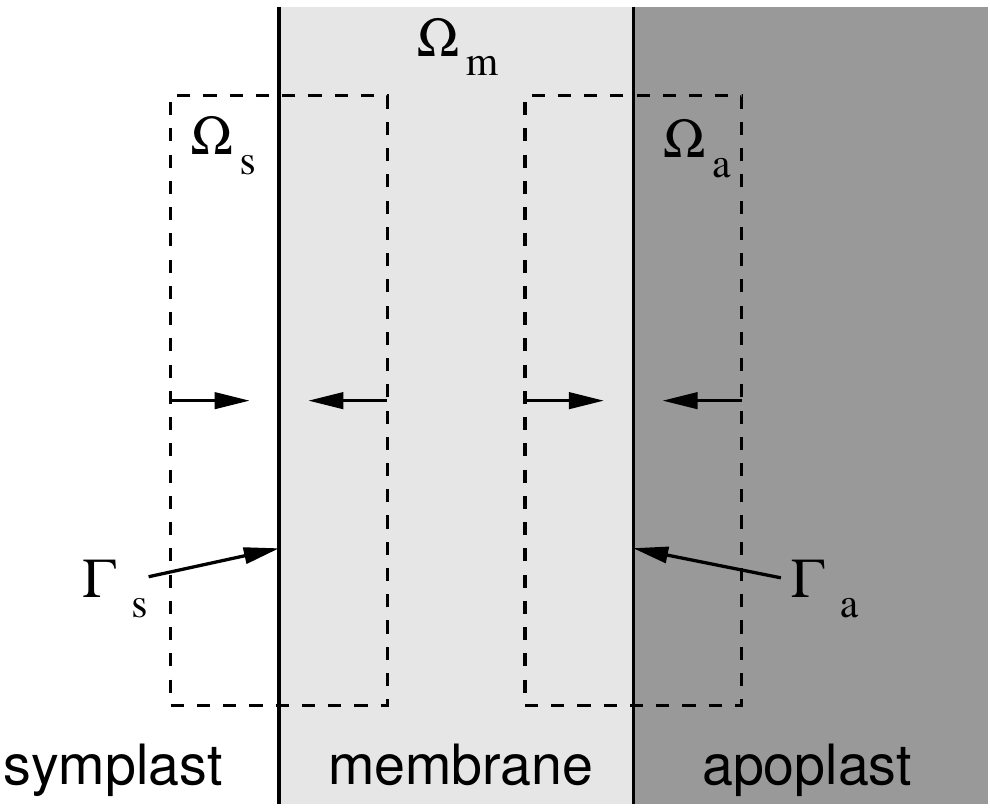}
 \caption{Scheme of the method used to obtain the transmission conditions. Small domains $\Omega_s$ and $\Omega_a$ enclosing the interface symplast-membrane and membrane-apoplast, respectively, are reduced to the surfaces $\Gamma_s$ and $\Gamma_a$. The membrane is represented by the domain $\Omega_m$.}
\label{fig:transmission}
\end{figure}

Consider now the case where a membrane is present. Shall $\Omega_s$ be a domain enclosing an arbitrary piece $\Gamma_s$ of the interface between the symplast and the membrane (Fig. \ref{fig:transmission}). Shall the membrane be represented by the domain $\Omega_m$. Note that the derivation of the conditions for the interface between membrane and apoplast is conducted in the same manner and will not be given here explicitly. Because of the membrane, $\varsigma \neq 0$ and $\vect{a}_i \neq 0$ for $i=1,2$. Integration of \eqref{eq:speciesconservation} over $\Omega_s$ and application of Gau\ss' law gives
\[
 \partial_t \int_{\Omega_s} \rho_i\, dx + \int_{\partial \Omega_s} ( \rho_i\, \vect{v} + \vect{j}_i + \vect{a}_i ) \cdot \vect{n}\, d\gamma = 0 \; .
\]
The membrane can be assumed to not allow advective fluxes, so that on $\partial \Omega_s \cap \Omega_m$ we have $\vect{v} = 0$. On $\partial \Omega_s \backslash \overline{\Omega}_m$ there is no protein mediated transport so that $\vect{a} = 0$ there. The thickness of $\Omega_s$ is reduced to zero such that the interface is kept inside ($\Omega_s \to \Gamma_s$). The first integral tends to zero, while the second tends to an integral over $\Gamma_s$
\[
 \int_{\Gamma_s} \big( - \rho_i\, \vect{v} + \overline{\vect{j}}_i - \vect{j}_i + \vect{a}_i \big) \cdot \vect{n}\, d\gamma = 0 \; ,
\]
where $\overline{\vect{j}}_i$ is the diffusion flux in $\Omega_m$. Note that $\vect{n}$ points from the symplast to the apoplast. Because $\Omega_s$ and $\Gamma_s$ were chosen arbitrarily, the integrand has to be zero. A similar approach can be applied to the mass flux of the mixture, where $\vect{j}=0$ in $\Omega$ is used. We obtain the conditions
\begin{align}
\begin{aligned}
 (\rho_i\, \vect{v} + \vect{j}_i ) \cdot \vect{n} & =  (\vect{a}_i + \overline{\vect{j}}_i)\cdot \vect{n} \qquad \text{on}\ (\Gamma_s \cup \Gamma_a) \times (0,\infty)\; ,\\
 \rho\, \vect{v} \cdot \vect{n} & =  (\vect{a} + \overline{\vect{j}})\cdot \vect{n}\ \ \qquad \text{on}\ (\Gamma_s \cup \Gamma_a) \times (0,\infty)\; ,
\end{aligned}
\end{align}
where $\overline{\vect{j}} = \overline{\vect{j}}_1 + \overline{\vect{j}}_2$. The fluxes in $\Omega$ were written on the left hand side, while the fluxes in $\Omega_m$ are on right hand side. The diffusion fluxes $\overline{\vect{j}}_i$ are normally assumed to be constant in the membrane (constant gradients), \cite{Nobel:1999}. Protein mediated transport can be considered to be a chemical reaction allowing buffering of species, and hence, the fluxes $\vect{a}_{i,\sf I}$ and $\vect{a}_{i,\sf II}$ through $\Gamma_s$ and $\Gamma_a$, respectively, have not necessarily to be equal. Expressions for $\vect{a}_{i,\sf I}$ and $\vect{a}_{i,\sf II}$ will be developed in Sec. \ref{sec:protein_transport}.

The membrane $\Omega_m$ is very thin compared to the rest and is usually reduced to an interface denoted here as $\Gamma_m$. An expression for the membrane's buffering effect is obtained by integration of \eqref{eq:speciesconservation} over $\Omega_m$, application of Gau\ss' law and reduction of $\Omega_m$ to $\Gamma_m$. This process is equivalent to subtraction of the transmission conditions on $\Gamma_s$ from those on $\Gamma_a$. Finally, the transmission conditions on the reduced membrane interface $\Gamma_m$ are in total
\begin{subequations}
\begin{align}
 (\rho_i\, \vect{v} + \vect{j}_i ) \cdot \vect{n} & =  (\vect{a}_{i,\sf I} + \overline{\vect{j}}_i)\cdot \vect{n} \qquad \text{on}\ \Gamma_m \times (0,\infty)\; ,\qquad i=1,2\; , \label{eq:speciestransmission}\\
 \rho\, \vect{v} \cdot \vect{n} & =  (\vect{a}_{\sf I} + \overline{\vect{j}})\cdot \vect{n}\quad\!\qquad \text{on}\ \Gamma_m \times (0,\infty)\; , \label{eq:totalmasstransmission}\\
 [\rho_i\, \vect{v} + \vect{j}_i] \cdot \vect{n} & =  [\vect{a}_i] \cdot \vect{n}\ \ \qquad\qquad \text{on}\ \Gamma_m \times (0,\infty)\; ,\qquad i=1,2\; ,\label{eq:speciesmembraneconservation}\\
 [\rho\, \vect{v}] \cdot \vect{n} & =  [\vect{a}]\cdot \vect{n} \quad\qquad\qquad \text{on}\ \Gamma_m \times (0,\infty)\; , \label{eq:totalmassmembraneconservation}
\end{align}
\end{subequations}
where the assumptions $\overline{\vect{j}}_i=\text{const}$, and $\overline{\vect{j}}=\text{const}$ in the membrane were used, and $[\cdot]$ is defined as the jump across the membrane (i.e. $[\vect{a}_i] = \vect{a}_{i,\sf II} - \vect{a}_{i,\sf I}$).

\subsection{Protein mediated transport}\label{sec:protein_transport}
Most solute fluxes through membranes are mediated by transporting proteins, \cite{Nobel:1999}. Transport can be either passive or active with usage of energy (e.g. ATPase pumps). We model this processes as surface reactions, in which the solute on one side ($S_{\sf I}$) reacts with the transporter, builds a complex ($S\!-\!T$), which decays by transporting the solute to the other side ($S_{\sf II}$)
\[
\begin{split}
S_{\sf I} + T &\longrightleftharpoons^{k_1}_{k_2} S\!-\!T \longrightleftharpoons^{k_3}_{k_4} S_{\sf II} + T \; .
\end{split}
\]
\begin{figure}[t]
\centering
\includegraphics[width=5cm]{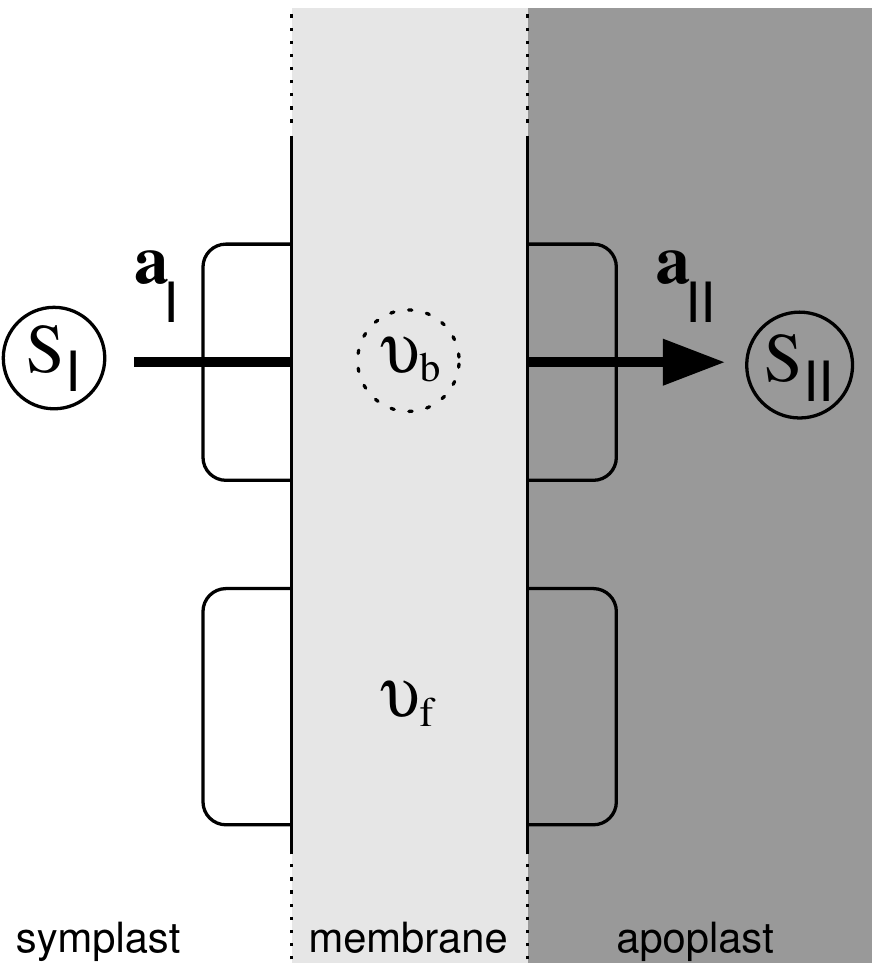}
 \caption{Depiction of the mechanism assumed for protein mediated transport, where a transport direction from symplast to apoplast is shown exemplarily.}
\label{fig:transporters}
\end{figure}
The concentrations of the solute on either sides ($S_{\sf I}$ and $S_{\sf II}$) are denoted by $c_{\sf I}$ and $c_{\sf II}$, respectively. The concentrations of free and bound transporters are denoted with $\vartheta_f$ and $\vartheta_b$, respectively. In general, the density of the mixture can differ between sides and we denote these here with $\rho_{\sf I}$ and $\rho_{\sf II}$. The above simple reaction mechanism produces fluxes on each side
\begin{align}
  \begin{aligned}
    \vect{a}_{\sf I} & = (k_1\, \rho_{\sf I}\, c_{\sf I}\, \vartheta_f  - k_2\, \vartheta_b)\, \vect{n} \; , \\
    \vect{a}_{\sf II} & =(k_3\, \vartheta_b - k_4\, \rho_{\sf II}\,c_{\sf II}\,\vartheta_f)\, \vect{n} \; ,
  \end{aligned}
\end{align}
where $\vect{n}$ is the normal of the membrane pointing out of the cell. These expressions correspond to the rate law of the above reaction mechanism. See Fig. \ref{fig:transporters} for a depiction of this mechanism.

The concentration of free and bound transporters are obtained by pointwise reactions. The rate of change of free transporters due to binding of solute and decay of the complex is given by
\begin{equation}\label{eq:differential_flux}
 [\vect{a}] \cdot \vect{n} = (\vect{a}_{\sf II} - \vect{a}_{\sf I})\cdot \vect{n} = -(k_1\,\rho_{\sf I}\, c_{\sf I} + k_4\,\rho_{\sf II}\,c_{\sf II})\,\vartheta_f + (k_2+k_3)\,\vartheta_b  \; .
\end{equation}
The net transport direction varies in time and depends on the solute concentrations and coefficients $k_1$ to $k_4$. By focusing on a quasi-stationary situation in which $\vect{a}_{\sf II} = \vect{a}_{\sf I}:=\vect{a}$, the following net flux and concentration of free transporters are obtained
\begin{align}
  \begin{aligned}
    \vect{a} & = {(k_1\,k_3\, \rho_{\sf I}\, c_{\sf I} - k_2\,k_4\, \rho_{\sf II}\, c_{\sf II})}/{(k_2+k_3)}\ \vartheta_f\, \vect{n}\; , \\
    \vartheta_f & = {(k_2+k_3)}/{(k_2+k_3 + k_1\, \rho_{\sf I}\, c_{\sf I} + k_4\, \rho_{\sf II}\, c_{\sf II})}\  \vartheta_0\; ,
  \end{aligned}
\end{align}
where $\vartheta_0(t,x) = \vartheta_f(t,x) + \vartheta_b(t,x)$ is the total amount of transporters. We will focus here on the special case where either $k_1=0$ or $k_4 =0$, i.e. on the case of a perfect uni-directional transporter. Using the assumption of quasi stationarity, the mechanism is found to follow Michaelis-Menten kinetics. Exemplary for an efflux transporter ($k_4=0$)
\begin{align}
 \vect{a} & = \rho_{\sf I}\, c_{\sf I}\, k_1\,k_3/(k_2+k_3 + k_1\, \rho_{\sf I}\, c_{\sf I})\ \vect{n}\; .
\end{align}

The applicability of the assumption of quasi stationarity is limited. Transporters are in general subjected to regulation, which reduces or increases the amount of transporters depending on the status of the system. The total amount of transporters varies as follows
\begin{equation}
 \partial_t \vartheta_0 = R(t,x,\vartheta_0) \; ,
\end{equation}
where $R(t,x,\vartheta_0)$ is a function describing the rate of regulation. $R$ is in general composed of a positive production term and a negative degradation term. Negative degradation is often assumed to be proportional to $\vartheta_0$ to prevent negative concentrations. The production term will depend on the local or global solute concentration and regulation is used to control that concentration. A general analysis of robust homeostatic control of a species based on concentrations is presented in, \cite{Ni:2009}, and an example for zinc homeostasis in yeast and plant cells is found in \cite{Claus:2012,Claus:2012b, Claus:2013}. We will not consider details of regulation here.

Assume that only free transporters are regulated and by using Eq. \eqref{eq:differential_flux}, the following system describing the pointwise dynamics of the free and bound transporters is obtained
\begin{align}\label{eq:regulation}
  \begin{aligned}
    \partial_t \vartheta_f & = R(t,x,\vartheta_f) - (k_1\,\rho_{\sf I}\, c_{\sf I} + k_4\,\rho_{\sf II}\,c_{\sf II})\,\vartheta_f + (k_2+k_3)\,\vartheta_b  \; ,\\
    \partial_t \vartheta_b & = \qquad\qquad\qquad\!\! (k_1\,\rho_{\sf I}\, c_{\sf I} + k_4\,\rho_{\sf II}\,c_{\sf II})\,\vartheta_f - (k_2+k_3)\,\vartheta_b\; , \\
    & \qquad\qquad \vartheta_0(t,x)  = \vartheta_f(t,x) +  \vartheta_b(t,x) \; . 
  \end{aligned}
\end{align}

In a general situation several influx and efflux transporters might exist. The fluxes generalize then into
\begin{equation}\label{eq:activeflux}
 \vect{a}_{i,{\sf I}} = \sum_\alpha(k_{1,\alpha}^i\, \rho_{i,{\sf I}}\, \vartheta_{f,\alpha}^i  - k_{2,\alpha}^i\, \vartheta_{b,\alpha}^i)\, \vect{n}\; , \quad \text{and} \quad \vect{a}_{i,{\sf II}}  =\sum_\alpha(k_{3,\alpha}^i\, \vartheta_{b,\alpha}^i - k_{4,\alpha}^i\, \rho_{i,{\sf II}}\,\vartheta_{f,\alpha}^i)\, \vect{n} \; ,
\end{equation}
where $i=1,2$ and all  $\vartheta_{f,\alpha}^i$ and $\vartheta_{b,\alpha}^i$ fulfill an equation system equivalent to Eq. \eqref{eq:regulation}.

\subsection{Model}
Inserting the diffusion fluxes of Eq. \eqref{eq:generalfluxgradthree} into Eqs. \eqref{eq:speciesconservationgeneral} and  \eqref{eq:totalconservationgeneral}, and using $\rho_2 = \rho c$ delivers a system describing species and mass conservation in a compartment $\Omega$ with $\varsigma=0$
\begin{align}
  \begin{aligned}
    \partial_t (\rho\, c) + \operatorname{div}\big(\rho\, c\, \vect{v} - \rho\,D\,\nabla c - \rho\,G\, \nabla p\big) & = 0 \qquad \text{in}\ \Omega \times (0,\infty) \; ,\\
    \partial_t \rho + \operatorname{div}(\rho\, \vect{v}) & = 0 \qquad \text{in}\ \Omega \times (0,\infty)\; .
  \end{aligned}
\end{align}
The gradients in the membrane can be assumed to be constant and can be expressed by a jump across the membrane. Therefore, the diffusion fluxes in the membrane $\overline{\vect{j}}_2$ and $\overline{\vect{j}}$ are functions of the jumps in concentration and pressure 
\begin{equation}\label{eq:kedem}
 \overline{\vect{j}}_2([c],[p]) = -\frac{\rho}{h}\big( D\, [c] + G\, [p]\big)\, \vect{n}\; , \quad \text{and} \quad \overline{\vect{j}}([c],[p]) = \frac{\rho}{h}\varsigma\big( \mathcal{D}\, [c] - \mathcal{G}\, [p]\big)\, \vect{n} \; ,
\end{equation}
where the gradients in Eqs. \eqref{eq:generalfluxgradthree} were approximated by a jump across the membrane of thickness $h$. The expressions in Eq. \eqref{eq:kedem} are related to the Kedem-Katchalsky equations, \cite{calvez:2010}.
  
Using Eq. \eqref{eq:activeflux} in \eqref{eq:speciestransmission} and \eqref{eq:speciesmembraneconservation}, the transmission conditions on the interface $\Gamma_m$ for the solute are obtained
\begin{subequations}
\begin{align}
  \big(\rho\, c\, \vect{v} - \rho\, D\,\nabla c - \rho\,G\, \nabla p \big) \cdot \vect{n} & = \big(\vect{a}_{2,\sf I} +\overline{\vect{j}}_2([c],[p])\big) \cdot \vect{n} \qquad \text{on } \Gamma_m \times (0,\infty)\; ,\label{eq:trans_symplast}\\
  \big[\rho\, c\, \vect{v} - \rho\, D\,\nabla c + \rho\,G\, \nabla p\big] \cdot \vect{n} & = [\vect{a}_{2}]\cdot \vect{n}\ \qquad\qquad\qquad\qquad \text{on } \Gamma_m \times (0,\infty) \; . \label{eq:trans_apoplast}
\end{align}
\end{subequations}

Water is almost incompressible ($\rho_1 \approx \textrm{const}$) and compressibility arises here due to the solute influencing the mixture's density $\rho = \rho_1 + \rho_2 = (1-c)\, \rho + c\, \rho$. Using the assumption of a small concentration $c$, in a first approximation $\rho \approx \rho_1 \approx \textrm{const}$. Also, barodiffusion in $\Omega$ can be assumed to be small compared to the concentration driven flux, i.e. it is assumed to play only a role in transport through the membrane. Viscosity can be assumed to dominate the flow due to the small characteristic scale of a few microns, and the flow in the symplast (cell inside) is assumed to be a Stokes flow. The apoplast (cell wall) is porous suggesting a Darcy flow. The equations describing flow, mass and species conservation are hence
\begin{subequations}
\begin{align}
\partial_t \vect{v} - \frac{\eta}{\rho_1} \Delta\vect{v} + \frac{1}{\rho_1}\nabla p & =0 \qquad\text{in symplast} \; ,\label{eq:stokes}\\
\vect{v} + K\, \nabla p & =0  \qquad \text{in apoplast} \; , \label{eq:darcy}\\
\operatorname{div} \vect{v} & = 0 \qquad \text{in symplast and apoplast} \; , \label{eq:incompressibility}\\
\partial_t c  + \operatorname{div}\left( \vect{v}\, c - D\,\nabla c\right) & = 0 \qquad \text{in symplast and apoplast} \; , \label{eq:conservation}
\end{align}
\end{subequations}
where $\eta>0$ is the dynamic viscosity and $K>0$ is the permeability of the apoplast.

Diffusive permeation of the solute can be assumed to be small compared against protein mediated transport. We will also assume that only the solute is subjected to transport via proteins. Assuming that the structure in the membrane is oriented normally, the velocity at the interface can be assumed to be perpendicular to it ($\vect{v} \times \vect{n}=0$). Together with incompressibility, this condition has as a consequence that the normal viscous stress $(\tensor{\sigma}' \vect{n}) \cdot \vect{n}$ is zero on a flat boundary, i.e. the boundary experiences only shear stress. The viscous stress tensor of an incompressible viscous fluid is defined as
\begin{equation}
  \tensor{\sigma}' = 2\, \eta \, \operatorname{S}\!\vect{v} \; ,
\end{equation}
with $\operatorname{S}\!\vect{v}$ the symmetric velocity gradient
\begin{equation}
 \operatorname{S}\!\vect{v}  = \frac{1}{2}\left(\nabla \vect{v} + \nabla \vect{v}^\textsf{T} \right) \; ,
\end{equation}
and is related to the full stress tensor $\tensor{\sigma} = -p\, \tensor{I} + \tensor{\sigma}' = -p\, \tensor{I} + 2\eta\operatorname{S}\!\vect{v}$. Assuming that the effect of corners is small, it is possible to set $(\tensor{\sigma}' \vect{n}) \cdot \vect{n} = 0$ on $\Gamma_m \times (0,\infty)$. Water fluxes mediated by active water transporters are probably small compared to the fluxes via passive transporters (aquaporins): $\|\vect{a}_{\sf I}\| \ll \|\overline{\vect{j}}\|$. Moreover, by adding $(\tensor{\sigma}' \vect{n}) \cdot \vect{n} = 0$ only on the Stokes flow side, Eq. \eqref{eq:totalmasstransmission} becomes
\begin{equation}\label{eg:stress_condition}
 \kappa \big((\tensor{\sigma}'\vect{n}) \cdot \vect{n} + [p] \big) = \delta [c]  - \vect{v} \cdot \vect{n}  \qquad \textrm{on } \Gamma_m \times (0,\infty) \; ,
\end{equation}
where $\kappa := \varsigma\mathcal{G}/h$, $\delta:=\varsigma\mathcal{D} / h$. Note that the normal and the jump $[\cdot]$ are oriented from the Stokes to the Darcy side. The physical meaning of this condition becomes clear by setting $\kappa=\delta=0$
\[
 \vect{v} \cdot \vect{n} = 0 \quad \textrm{for } \kappa=\delta=0 \qquad \textrm{on } \Gamma_m \times (0,\infty) \; ,
\]
i.e. the net mass flux is zero for zero permeability ($\alpha=0$) or for a nonreflecting membrane ($\varsigma =0$). Also, a higher concentration in the cell, $[c]<0$, means that water is driven into the cell, $\vect{v}\cdot \vect{n}< 0$ for $[p]=0$, while a higher pressure in the cell, $[p]<0$, results in water flowing out of the cell, $\vect{v}\cdot \vect{n}> 0$ for $[c]=0$. Compare Fig. \ref{fig:waterfluxcell}.\\

\noindent
Finally, the following conditions are good approximations on the interface $\Gamma_m$
\begin{subequations}
\begin{align}
  \rho_1\,(c_{\sf I}\, \vect{v}_{\sf I} - D_2\,\nabla c_{\sf I} ) \cdot \vect{n} & \approx \vect{a}_{2,\sf I} \cdot \vect{n}&
  &\text{on } \Gamma_m \times (0,\infty)\; ,\label{eq:tc_one}\\
  \rho_1\,[c\, \vect{v} - D_2\,\nabla c ] \cdot \vect{n} & \approx [\vect{a}_{2}] \cdot \vect{n}&
  &\text{on } \Gamma_m \times (0,\infty)\; ,\label{eq:tc_two}\\
  \kappa \big(2\, \eta\,(\operatorname{S}\!\vect{v}\, \vect{n})\cdot \vect{n} +[p] \big) & \approx  \delta [c] - \vect{v}_{\sf I} \cdot \vect{n}&
  &\text{on } \Gamma_m \times (0,\infty) \; , \label{eq:tc_three}\\
  [\vect{v}] \cdot \vect{n} &\approx 0&
  &\text{on } \Gamma_m \times (0,\infty)\; , \label{eq:tc_four}\\
  \vect{v} \times \vect{n} & = 0&
  &\text{on } \Gamma_m \times (0,\infty) \; . \label{eq:tc_five}
\end{align}
\end{subequations}


\section{Mathematical formulation of  microscopic model}\label{MicroModel}
Let  $\Omega$ be a cube in $\mathbb R^3$ representing a plant tissue, and $\ve>0$ be a  parameter denoting the ratio between the size of a single cell and the size of the considered plant tissue $\Omega$. The microscopic structure of a plant tissue  is reflected in the difference between  the cell wall $\Omega^\ve_{a}\subset \Omega$ and the symplast inside the cells
$\Omega_z^\ve\subset \Omega$.
In the cell wall domain we shall distinguish between the cell wall apoplast  $\Omega_{aw}^\ve$ and parts of the cell wall $\Omega_{as}^\ve$ occupied by both plasmodesmata, that belong to cell symplast, and cell wall apoplast. This partition is a strong simplification of the true geometry of a plant tissue, however, it accounts for the basic structures. See Ref. \cite{Esau} for more anatomical details of plant cells and tissues.

We define  a unit cell $Y=\overline Y_{z}\cup \overline Y_a $,  where 
$Y_z$  is an open domain with a smooth boundary,    representing the part occupied by  symplast inside a cell, 
 and $Y_a$ is the cell wall,   with $\overline Y_a= \overline Y_{aw}\cup \overline Y_{as}$ and $Y_{as}$ is a domain comprising    both plasmodesmata  and cell wall apoplast.   We define also $\overline Y_s= \overline Y_z\cup \overline Y_{as}$. 
The corresponding boundaries are denoted by  $\Gamma_{z}=\partial Y_{aw}\cap \partial Y_z$, $\Gamma_{aw}=\partial Y_{aw}\cap \partial Y_{as}$, $\Gamma_a=\partial Y_a\setminus \partial Y$, $\Gamma_s=\partial Y_s\setminus \partial Y$,
$\Gamma_{as}= \partial Y_{as}\cap \partial Y_{z}$, 
$\Gamma_{zs}= \Gamma_{z} \cup \Gamma_{as}=\partial Y_z$.
Then considering translations of the unit cell given by 
$Y^k_j= Y_j + k$ for  $j=a, z, s, aw$ or $as$, and  $\Gamma^k_i= \Gamma_i + k$
for $i=z, aw, as$ or $zs$, with $k\in \mathbb{Z}^3$,
we can define the domains comprising  the microstructure of a plant tissue  $\Omega^\ve_j= \cup \{ \ve Y_j^k :\, \ve Y^k\subset \Omega, k\in \mathbb{Z}^3 \}$, \, $j=a, z, s, aw$ or $as$.
The microscopic  boundaries are  given by $\Gamma^\ve_i = \cup \{ \ve \Gamma_{i}^k :\, \ve Y^k\subset \Omega, k\in \mathbb{Z}^3 \}$, $i=z, aw, as$, or $ zs$. 
We notice that $\Omega^\ve_s$ and $\Omega_a^\ve$ are connected Lipschitz  domains. 
 The domain  $\Omega^\ve_a$  represent  porous medium of  cell walls, whereas 
 $\Omega^\ve_{sp}$ is introduced to depict   parts of  cell walls, modelled also as a  porous medium,  comprising both cell wall apoplast and many thin channels   of plasmodesmata. 
\begin{figure}[ht!]
\centering
\includegraphics[width=5.4cm]{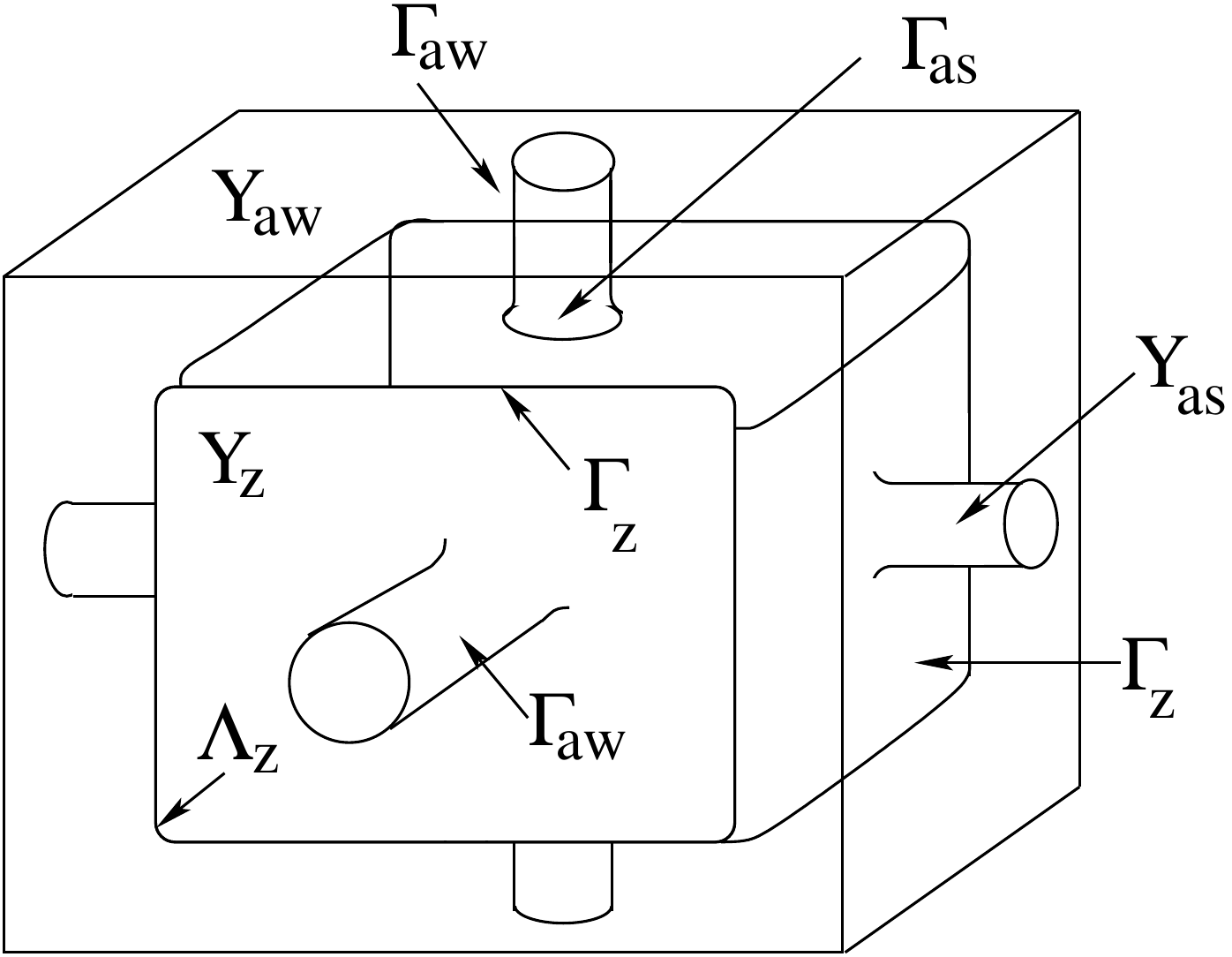}
 \includegraphics[width=4.4cm]{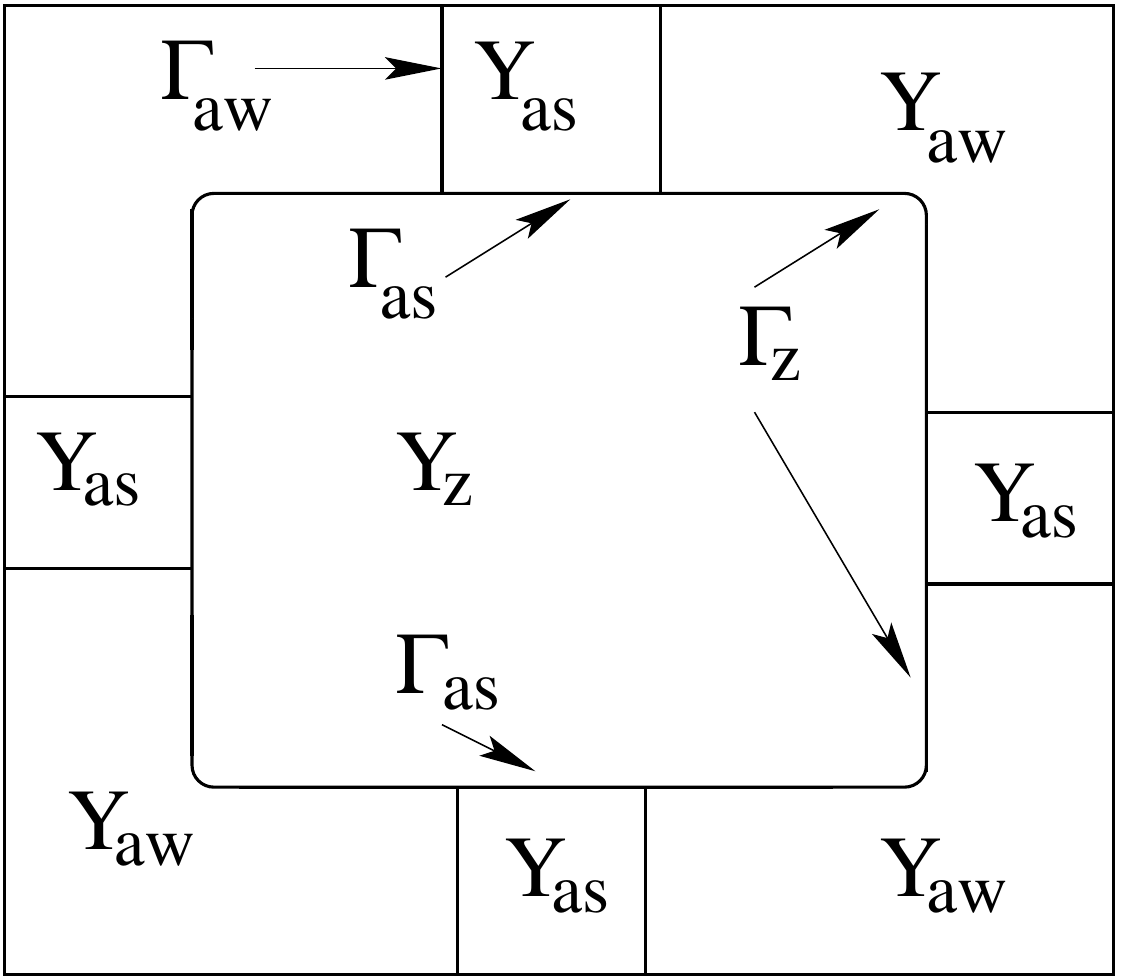}
  \includegraphics[width=5.5cm]{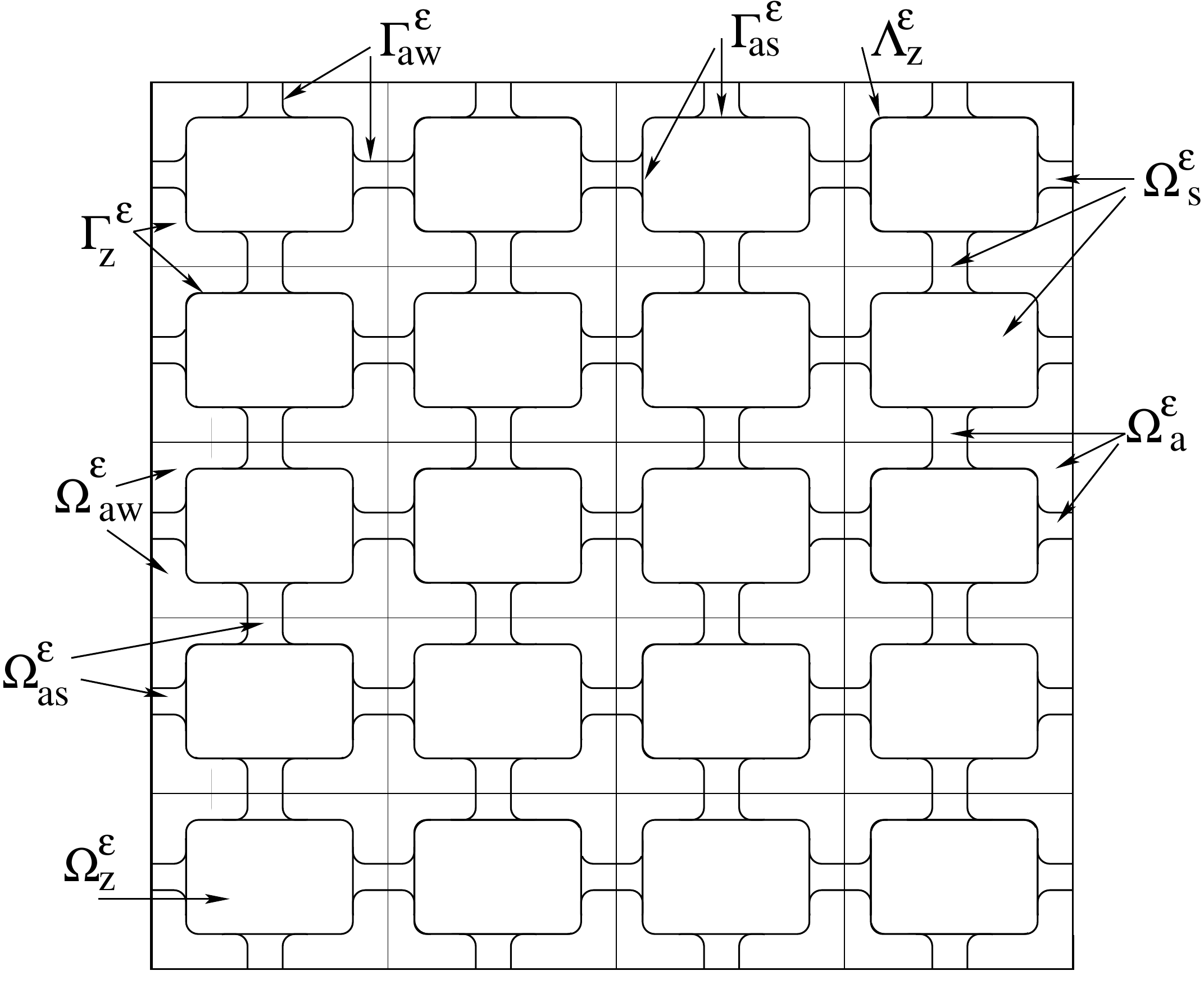}
 \caption{Left: unit cell $Y$. Center: 2D cross-section of a unit cell $Y$. Right: 2D cross-section of a plant tissue $\Omega$.}
\label{fig2}
\end{figure}

In order to simplify the  formulation of the microscopic model  we shall consider in the following the doubled notation for $\Omega_{as}^\ve$, i.e.
$\Omega_{ap}^\ve:= \Omega_{as}^\ve$ and  $\Omega_{sp}^\ve:=\Omega_{as}^\ve$. 

For the velocity  of water we shall consider the stationary version of Stokes equation \eqref{eq:stokes} inside the cells and Darcy equation \eqref{eq:darcy} in the cell walls apoplast  and parts of cell walls comprising plasmodesmata, and incompressibility \eqref{eq:incompressibility} of all flows 
\begin{align}\label{velosity}
  \begin{aligned}
    -\varepsilon^2 \eta \Delta   {\vect v}^\ve_z + \nabla p^\ve_z \hspace{0.1 cm} &=0  &&\text{ in } \Omega^\ve_z\times(0,T)\; , \\
    \text{div } {\vect v}^\ve_z & = 0 && \text{ in } \Omega^\ve_z\times(0,T)\; , \\ 
    {\vect v}^\ve_{i} +  K_{i, \ve} \nabla p^\ve_{i} &= 0  &&\text{ in }  \Omega^\ve_{i} \times(0,T), \quad i=sp, aw, ap\; , \\
    \text{div }  {\vect v}^\ve_{i}  & = 0 && \text{ in } \Omega^\ve_{i}\times(0,T), \quad i=sp, aw, ap \; ,
  \end{aligned}
\end{align}
where $\eta>0$ is the  viscosity constant, and the permeability tensors  are given by $Y$-periodic functions $K_i$, i.e.
$K_{i, \ve}(x)=K_{i}(x/\ve)$ for $x\in \Omega_i^\ve$, with $i=sp, aw, ap$. 

The concentration of an osmotically active solute in different  partitions of a plant cell is assumed to follow the conservation Eq. \eqref{eq:conservation} with a production/degradation term
\begin{align}\label{Eq_Conc}
 \partial_t c^\ve_i + \operatorname{div}(H_M({\vect v}^\ve_i) c^\ve_i - D_i^\ve(t,x)\nabla c^\ve_i) & = F^\ve_i(t,x,c^\ve_i) & &\text{ in }  \Omega_i^\ve\times(0,T), \quad i=z, sp, aw, ap \; ,
\end{align}
where 
\[
H_M(\vect{\xi})_j= \left\{\begin{array}{ll} {\xi}_j \quad&  \text{ for }\,  |\xi_j| \leq M, \\ M \text{sgn}(\xi_j) & \text{ for }  \,  |\xi_j| >  M, \end{array} \right. \; ,\, j=1,2,3\; ,
\]
for some $M>0$,  and $\vect{\xi} \in \mathbb R^3$. The velocities $\vect{v}^\ve_i$ need to be cut-off for technical reasons for the rigorous analysis of the macroscopic model. Assuming bounded velocities in tissues is biologically and physically sensible. The diffusion coefficients $D_i^\ve$ and the production/degradation terms $F^\ve_i$,  with $i=z, sp, aw, ap$,  are determined by $Y$-periodic functions $D_i(t,y)$ and $F_i(t,y,\xi)$, i.e.  
$D_i^\ve(t,x)=D_i(t,\frac x\ve)$  and  $F^\ve_i(t,x, \xi)=  F_i(t,\frac x\ve, \xi)$ for $\xi \in \mathbb R$ and $(t,x)\in (0,T)\times \Omega_i^\ve$.
The symplast and apoplast are coupled by diffusion fluxes and via protein mediated transport of solute through the cell membrane described by the following boundary conditions based on Eqs. \eqref{eq:tc_one} and \eqref{eq:tc_two}
\begin{alignat}{2}
\begin{aligned}
\label{Cell_membrBC}
(D_z^\ve\nabla c^\ve_z - H_M({\vect v}^\ve_z) c^\ve_z)\cdot\vect{n} &= (D_{sp}^\ve\nabla c^\ve_{sp} - H_M({\vect v}^\ve_{sp}) c^\ve_{sp})\cdot\vect{n} \\
&\quad + \ve \beta_{a}^\ve(t,x) \vartheta^\ve_{b,a}- \ve \alpha_s^\ve(t,x) c^\ve_z \vartheta_{f,s}^\ve &&\text{on } \Gamma^{\ve}_{as}\times(0,T)\; ,\\
(D_z^\ve\nabla c^\ve_z - H_M({\vect v}^\ve_z) c^\ve_z)\cdot\vect{n}&=  \ve \beta_{a}^\ve(t,x) \vartheta^\ve_{b,a} \phantom{c^\ve_{aw}}- \ve \alpha_s^\ve(t,x) \, c^\ve_z\, \vartheta_{f,s}^\ve   && \text{on   } \Gamma^\ve_{z}\times(0,T)\; ,\\
(D_{aw}^\ve\nabla c^\ve_{aw} -H_M({\vect v}^\ve_{aw}) c^\ve_{aw})\cdot\vect{n} &=  \ve \alpha_{a}^\ve(t,x) c^\ve_{aw} \vartheta_{f,a}^\ve-\ve \beta_s^\ve(t,x)  \vartheta_{b,s}^\ve  && \text{on   }  \Gamma^\ve_{z}\times(0,T)\; ,\\
(D_{ap}^\ve\nabla c^\ve_{ap} -H_M({\vect v}^\ve_{ap}) c^\ve_{ap})\cdot\vect{n} &=  \ve \alpha_{a}^\ve(t,x) c^\ve_{ap} \vartheta_{f,a}^\ve  \, \, - \ve \beta_s^\ve(t,x)  \vartheta_{b,s}^\ve   && \text{on   }  \Gamma^\ve_{as} \times(0,T)\;, 
\end{aligned}
\end{alignat}
where $\vect{n}$ denotes the outer normal vector to  $\partial\Omega^\ve_z$ and the protein mediated flux $\vect{a}_{2,{\sf I}}$ was expressed by relation \eqref{eq:activeflux} with $\alpha^\ve_{l}$ and $\beta^\ve_l$, for $l=a,s$, are related to the reaction rate coefficients $k_i$, for $i=1,2,3$.  We define the apoplastic and symplastic concentrations $c_a^\ve$  and $c_s^\ve$, respectively, as follows
\begin{align}\label{eq:apoplastic_c}
 c_a^\ve(t,x) = \left\{
\begin{array}{rl}
c_{aw}^\ve(t,x) &\quad \text{ in  } \Omega_{aw}^\ve\times(0,T)\; ,\\
c_{ap}^\ve(t,x) &\quad \text{ in } \Omega_{as}^\ve\times(0,T)\; ,
\end{array}\right. &\quad \text{and} 
&&  c_s^\ve(t,x) = \left\{
\begin{array}{rl}
c_{z}^\ve(t,x) &\quad \text{ in  } \Omega_{z}^\ve\times(0,T)\; ,\\
c_{sp}^\ve(t,x) &\quad \text{ in } \Omega_{as}^\ve\times(0,T)\; .
\end{array}\right.
\end{align}
The dynamics of the transporter concentrations are modelled by following ordinary differential equations
\begin{align}\label{transporter_eq}
  \begin{aligned}
    \partial_t \vartheta_{f,l}^\ve&= R_{l}^\ve(t,x,\vartheta_{f,l}^\ve)- \alpha_{l}^\ve(t,x) c^\ve_{l} \vartheta_{f,l}^\ve + \beta_{l}^\ve(t,x) \vartheta_{b,l}^\ve-\gamma_{f,l}^\ve(t,x) \vartheta_{f,l}^\ve && 
    \text{ on } \Gamma^\ve_{zs}\times(0,T)\; , \\
    \partial_t \vartheta_{b,l}^\ve & = \phantom{ R_{l}^\ve(t,x,\vartheta_{f,l}^\ve)- } \, \, \alpha_{l}^\ve(t,x) c^\ve_{l} \vartheta_{f,l}^\ve - \beta_{l}^\ve(t,x) \vartheta_{b,l}^\ve -\gamma_{b,l}^\ve(t,x) \vartheta_{b,l}^\ve && \text{ on } 
    \Gamma^\ve_{zs}\times(0, T) \; .
  \end{aligned}
\end{align}
with $l=a,s$, and $\gamma^\ve_{j,l}$, with $j=f,b$  are rates of decay of corresponding transporter concentration and 
$R_l^\ve$ are production/degradation terms representing the genetic regulation of the transporters. The main difference to Eq. \eqref{eq:regulation} is the inclusion of the decay terms, which model a possible instability of the transporting proteins. The coefficients and  $R_l^\ve$ are given by $Y$-periodic functions $R_l$, $\alpha_l$, $\beta_l$ and $\gamma_{j,l}$,  defined on $\Gamma_{zs}\times(0, T)$, i.e. $R_l^\ve(t,x, \xi)= R_l(t,\frac x\ve, \xi)$ 
and 
$\alpha^\ve_{l}(t,x)=\alpha_{l}(t,\frac {x}\ve)$,\,
 $\beta^\ve_{l}(t,x)=\beta_{l}(t,\frac {x}\ve)$,   \, 
$\gamma^\ve_{j,l}(t,x) =\gamma_{j,l}(t,\frac{x}\ve)$,  for $\xi \in \mathbb R$  and $(x,t) \in \Gamma_{zs}^\ve\times (0,T)$, with $l=a, s$ and $j=f,b$.\\

In contiguous apoplast and symplast in plasmodesmata populated regions, we pose continuity and zero-flux boundary conditions  -- based on Eqs. \eqref{eq:tc_one} and \eqref{eq:tc_two} taking into consideration that no transporters are present ($\vect{a}_{2,{\sf I}} = 0$ and $[\vect{a}_2] =0$) -- and set the concentration to be continuous
\begin{align}\label{Cell_BC_other}
  \begin{aligned}
    (D_{aw}^\ve\nabla c^\ve_{aw} - H_M({\vect v}^\ve_{aw}) c^\ve_{aw})\cdot\vect{n}_{aw} &= (D_{ap}^\ve\nabla c^\ve_{ap} -H_M({\vect v}^\ve_{ap}) c^\ve_{ap})\cdot\vect{n}_{aw} && \text{ on } \Gamma^{\ve}_{aw}\times(0,T)\; ,\\
    (D_{sp}^\ve\nabla c^\ve_{sp} - H_M({\vect v}^\ve_{sp}) c^\ve_{sp})\cdot\vect{n}_{aw}&= 0 && \text{ on } \Gamma^{\ve}_{aw}\times(0, T)\; , \\
    c^\ve_{aw}& =c_{ap}^\ve   && \text{ on } \Gamma^{\ve}_{aw}\times(0, T)\; ,\\
    c^\ve_z & =c_{sp}^\ve   && \text{ on } \Gamma^{\ve}_{as}\times(0, T)\; ,
  \end{aligned}
\end{align}
where $\vect{n}_{aw}$ is the outer normal vector to $\partial \Omega_{aw}^\ve$.  Initial conditions for solute and transporter concentrations are posed as follows
\begin{align}\label{init_cond}
\begin{aligned}
c^\ve_l(0,x) &= c_{l}^0(x) && \text{ in }  \Omega_l^\ve\; , \\
\vartheta_{j,l}^\ve(0,x) &= \vartheta_{j,l}^{\ve,0}(x)  && \text{ on } \Gamma^\ve_{zs}\; ,
\end{aligned}
\end{align}
where $j=f,b$ and $l=a, s$. The initial conditions for transporter concentrations are defined as  $\vartheta_{j,l}^{\ve,0}(x) = \vartheta_{j,l}^0(x, x/\ve)$ for $x\in \Gamma^\ve_{zs}$, where
 $\vartheta_{j,l}^0(x,y) = \vartheta_{1j,l}^0(x)\vartheta_{2j,l}^0(y)$ for $(x,y)\in\Omega\times \Gamma_{zs}$,    and  $\vartheta_{2j,l}^0$ are $Y$-periodic, with $j=f,b$ and $l=a,s$.\\

Following conditions, based on \eqref{eq:tc_three}, \eqref{eq:tc_four} and \eqref{eq:tc_five}, are considered for the velocity of water on internal  boundaries 
\begin{align}
\label{BC1velosity}
\begin{aligned}
  \left.
    \begin{aligned}
      \kappa_1\big(-2\ve^2 \eta\, (\operatorname{S}\!{\vect v}_z^\ve \vect{n} )\cdot \vect{n}+p^\ve_z -  p_{aw}^\ve \big)
      &= \ve \delta_1(c^\ve_{z}-c^\ve_{aw})  + \ve  {\vect v}_{aw}^\ve\cdot \vect{n}\\
      {\vect v}_z^\ve \cdot \vect{n} ={\vect v}_{aw}^\ve \cdot \vect{n}\; , \qquad  {\vect v}_{z}^\ve \times \vect{n}&=0
    \end{aligned}
  \quad\right\}
  &&&\text{ on }  \Gamma^\ve_{z}\times (0,T)\; , \\[.5em]
  \left.
    \begin{aligned}
      -2\ve^2 \eta\,( \operatorname{S}\!{\vect v}_z^\ve \vect{n}) \cdot \vect{n} +p^\ve_z  &=  p_{sp}^\ve\\
      \kappa_2\big(-2\ve^2 \eta\,(\operatorname{S}\!{\vect v}_z^\ve \vect{n}) \cdot \vect{n}+p^\ve_z - p_{ap}^\ve \big) &= \ve  \delta_2 (c^\ve_{z}-c^\ve_{ap}) +  \ve {\vect v}_{ap}^\ve\cdot \vect{n}\\
      {\vect v}_z^\ve \cdot \vect{n} =   {\vect v}_{sp}^\ve \cdot \vect{n}+ {\vect v}_{ap}^\ve \cdot \vect{n}\; ,& \qquad
      {\vect v}_{z}^\ve \times \vect{n}=0
    \end{aligned}
  \quad\right\}
  &&&\text{ on }  \Gamma^\ve_{as}\times (0,T)\; , \\[.5em]
  p^\ve_{aw}=  p_{ap}^\ve \; , \qquad {\vect v}_{aw}^\ve \cdot \vect{n}_{aw} ={\vect v}_{ap}^\ve \cdot \vect{n}_{aw} \; ,  \qquad   {\vect v}_{sp}^\ve \cdot \vect{n}_{aw} =0 \qquad
  &&&\text{ on }  \Gamma^\ve_{aw}\times (0,T)\; .
\end{aligned}
\end{align}\\

\noindent
On the external boundary $\partial\Omega$ we assume zero-flux boundary conditions for concentrations 
\begin{align}
\begin{aligned}
(H_M( {\vect v}^\ve_{aw}) c^\ve_{aw}- D^\ve_{aw}\nabla c^\ve_{aw})\cdot\vect{n}_{ex} &= 0 
  && \;\qquad\text{on  } (\partial\Omega\setminus  \partial \Omega_{as}^\ve)\times (0,T)\; ,\ \\
   (H_M({\vect v}^\ve_{i}) c^\ve_{i}- D^\ve_{i}\nabla c^\ve_{i} )\cdot\vect{n}_{ex} &= 0
  && \;\qquad\text{on  } (\partial\Omega\cap \partial \Omega_{as}^\ve)\times(0,T)\; ,\ \label{ExternalBC_N}
\end{aligned}
\intertext{where $i=ap, sp$  and $\vect{n}_{ex}$ is the outer normal vector to $\partial \Omega$. We further prescribe symplastic and apoplastic normal velocities }
  \begin{aligned}
    {\vect v}_{aw}^\ve \cdot \vect{n}_{ex} &= v_{D} && \quad\text{ on }\,  (\partial  \Omega_{aw}^\ve\cap \partial\Omega)\times(0,T)\; ,\\
    {\vect v}_{ap}^\ve \cdot \vect{n}_{ex} & = v_{D} && \quad\text{ on }\, (\partial \Omega_{as}^\ve\cap \partial  \Omega)\times(0,T)\; ,\\
    {\vect v}_{sp}^\ve \cdot \vect{n}_{ex} & =0 && \quad\text{ on } \,  (\partial \Omega_{as}^\ve\cap \partial\Omega)\times(0,T)\; .\label{BC3velosity}
  \end{aligned}
\end{align}
To simplify notations, we define similar to \eqref{eq:apoplastic_c} apoplastic and symplastic flow variables
\begin{align}\nonumber
v^\ve_a(t,x) &=  \left\{  
\begin{array}{ll}
v^\ve_{aw}(t,x)  &  \text{ in } \Omega_{aw}^\ve\times (0,T)\; ,\\
v^\ve_{ap}(t,x)  &  \text{ in } \Omega_{as}^\ve\times (0,T)\; ,
\end{array}  \right. & \text{and} 
&& v^\ve_s(t,x) &=  \left\{ 
\begin{array}{ll}
v^\ve_{z}(t,x)  &  \text{ in } \Omega_{z}^\ve\times (0,T)\; ,\\
v^\ve_{sp}(t,x)  &  \text{ in } \Omega_{as}^\ve\times (0,T)\; ,
\end{array}  \right. \\
p^\ve_a(t,x) &=  \left\{  
\begin{array}{ll}
p^\ve_{aw}(t,x)  & \text{ in } \Omega_{aw}^\ve\times (0,T)\; ,\\
p^\ve_{ap}(t,x)  &  \text{ in } \Omega_{as}^\ve\times (0,T)\; ,
\end{array}  \right. & \text{and} 
&& p^\ve_s(t,x) &=  \left\{ 
\begin{array}{ll}
p^\ve_{z}(t,x)  & \text{ in } \Omega_{z}^\ve\times (0,T)\; ,\\
p^\ve_{sp}(t,x)  &  \text{ in } \Omega_{as}^\ve\times (0,T)\; .
\end{array}  \right. \nonumber
\end{align}
The corresponding diffusion coefficients and permeability tensors are given by
\begin{align}\nonumber
D_a(t,y)& = \left\{
\begin{array}{ll}
D_{aw}(t,y) & \text{in }  Y_{aw} \times (0,T)\; ,\\
D_{ap}(t,y) &  \text{in } Y_{as} \times (0,T)\; ,
\end{array}   \right.   & \text{and} && 
D_s(t,y)=   \left\{
\begin{array}{ll}
 D_z(t,y) & \text{in } Y_z \times (0,T)\; ,\\
D_{sp}(t,y) &  \text{in } Y_{as} \times (0,T)\; ,
\end{array}    \right.  \\
K_a(y) & =   \left\{
\begin{array}{ll}
 K_{aw}(y)& \text{in } Y_{aw} \times (0,T)\; ,\\
K_{ap}(y) &  \text{in } Y_{as} \times (0,T)\; . 
\end{array}    \right. 
 \nonumber
\end{align}
The production/degradation terms are defined by  
\begin{align}\nonumber
F_a^\ve(t,x, c_a^\ve) & =   \left\{
\begin{array}{ll}
 F_{aw}^\ve(t,x, c^\ve_{aw})& \text{in } \Omega^\ve_{aw} \times (0,T)\; ,\\
F_{ap}^\ve(t,x, c^\ve_{ap}) &  \text{in } \Omega^\ve_{as} \times (0,T)\; , 
\end{array}    \right. 
& \text{and} && 
F_s^\ve(t,x, c_s^\ve)=   \left\{
\begin{array}{ll}
 F_z^\ve(t,x, c_z^\ve) & \text{in } \Omega^\ve_z \times (0,T)\; ,\\
F_{sp}^\ve(t, x, c_{sp}^\ve) &  \text{in } \Omega^\ve_{as} \times (0,T)\; .
\end{array}    \right. 
\end{align}\\

We assume that $\delta_i\neq 0$ and $\kappa_i\neq 0$, for $i=1,2$, and  define $\kappa^\ve(x) = \kappa(x/\ve)$, $\delta^\ve(x) = \delta(x/\ve)$ for $x\in \Gamma^\ve_{zs}$, where $\kappa$, $\delta$ are $Y$-periodic functions  given by   $\kappa(y)=\kappa_1^{-1}$ on $\Gamma_z$, $\kappa(y)=\kappa_2^{-1}$ on $\Gamma_{as}$ and 
$\delta(y)=\delta_1\kappa_1^{-1}$ on $\Gamma_z$, $\delta(y)=\delta_2 \kappa_2^{-1}$ on $\Gamma_{as}$.

\begin{assumption}\label{assump}
\begin{itemize}
 \item Tensors  $K_{a} \in L^\infty(Y_a)^{3\times 3}$,  $K_{sp} \in L^\infty(Y_{as})^{3\times 3}$  are symmetric and  uniformly elliptic, i.e. $(K_l(y)\xi, \xi)\geq k_l |\xi|^2$ for $k_l>0$, $\xi \in \mathbb R^3$, a.a. $y\in Y_l$, where  $l=a, sp$ and $Y_{sp}:=Y_{as}$.
  \item Diffusion coefficients
 $D_l, \partial_t D_l \in L^{\infty}((0,T)\times Y_l)^{3\times 3}$ are symmetric and uniformly elliptic, i.e. $(D_l(t,y)\xi, \xi)\geq d_l |\xi|^2$ for $d_l>0$, $\xi \in \mathbb R^3$,  a.a. $(t,y)\in (0,T)\times Y_l$, where $l=a,s$. 
\item Production/degradation  $F_l: (0,T)\times Y_l \times \mathbb R \to \mathbb R$ is differentiable in $t$, measurable in $y$, $F_l$ and $\partial_t F_l$ are sublinear in $\xi$, $F_l$ is Lipschitz continuous in $\xi$ uniformly in $(t,y)$, and
$F_l(t,y,\xi_{-})\xi_{-}\leq C|\xi_{-}|^2$ for  $(t,y)\in (0,T)\times Y_l$, where $\xi_{-}=\min\{\xi, 0\}$ and $l=a,s$.
\item Functions $R_l(\cdot,\cdot, \xi) \in C ([0,T]; L^\infty(\Gamma_{zs}))$ 
for all $\xi \in \mathbb R$ are uniformly Lipschitz continuous in $\xi$,   and nonnegative  for nonnegative $\xi$ and $(x,t) \in \Gamma_{zs}\times[0,T]$, with $l=a,s$.
 \item Coefficients  $\alpha_l$,  $\beta_l$,  $\gamma_{j,l}$ $\in C([0,T];L^\infty(\Gamma_{zs}))$ are nonnegative and $\partial_t \alpha_l,  \partial_t \beta_l\in L^{\infty}((0,T)\times \Gamma_{zs})$,  where $j=f,b$ and $l=a,s$. 
\item Initial conditions  $c^0_l\in H^2(\Omega)$ and $\vartheta^0_{1j,l} \in L^\infty(\Omega)$,   $\vartheta^0_{2j,l} \in L^\infty(\Gamma_{zs})$  are nonnegative,  $l=a,s$, $j=f,b$.
\item Boundary condition $v_D\in H^{1/2}(\partial\Omega)$ is given by    $v_D= V_D\cdot \vect{n}_{ex} $ on $\partial\Omega$, where  $V_D \in H^{1}(\Omega)$ 
and $\operatorname{div} V_D=0$ in $\Omega$.
\end{itemize}
\end{assumption}

By  Lemma~4 in  \cite{Tartar1980},  there exists a restriction operator $R_{Y_a} \in \mathcal L(H^1(Y), H^1_{\Gamma_{zs}}(Y_a))$  with the properties 
\begin{equation}\label{restriction_a}
\begin{cases}
 R_{Y_a} \psi = \psi  &  \text{ in a neighborhood of } \partial Y \; , \\
 \psi=0 \text{ on } \Gamma_{zs}  & \Longrightarrow  \, \, \,   R_{ Y_a} \psi = \psi, \\ 
\operatorname{ div } \psi =0   &\Longrightarrow  \, \, \,  \operatorname{div} R_{ Y_a} \psi = 0 \; .
\end{cases}
\end{equation} 
Here $H^1_{\Gamma_{zs}}(Y_a)= \{ \psi \in H^1(Y_a): \, \psi =0 \text{ on } \Gamma_{zs}\}$.
For $\psi\in H^{1}(\Omega)$  we define $\psi_{j}^\ve(y)= \psi(\ve y)$ for  $y \in Y^j= (Y+ k^j)$, with $j=1,\ldots,J$, where $J \in \mathbb N$   such that $\Omega= \cup_{j=1}^J \ve Y^j$, and $k^j \in \mathbb Z^3$. 
Then the operator defined as 
$
\tilde R^\ve_{\Omega_a^\ve} \psi(x) = (R_{Y_a} \psi_j^\ve) (x/\ve) $   for   $x \in \ve Y^j_a$, with  $j=1,\ldots,J,
$
has the properties  that  $\tilde R_{\Omega^\ve_a} \in \mathcal L( H^1(\Omega),  H^1_{\Gamma_{zs}^\ve}(\Omega^\ve_a))$ and 
\begin{equation}\label{restriction_a}
\begin{cases}
\psi=0 \text{ on } \Gamma_{zs}^\ve  &\Longrightarrow \, \, \, \tilde R^\ve_{\tilde \Omega_a^\ve} \psi = \psi \; , \\
\operatorname{ div } \psi =0   &\Longrightarrow  \, \, \,  \operatorname{div } \tilde R^\ve_{\Omega_a^\ve} \psi = 0 \; , \\
\|\tilde R^\ve_{\Omega_a^\ve} \psi \|_{L^2(\Omega^\ve_a)} &\leq  \, \, \,   C \big(\|\psi\|_{L^2(\Omega)}+  \ve \|\nabla \psi\|_{L^2(\Omega)}\big) \; ,\\
\ve \|\nabla \tilde R^\ve_{\Omega_a^\ve} \psi \|_{L^2(\Omega^\ve_a)} &\leq  \, \,  \, C\big( \|\psi\|_{L^2(\Omega)}+  \ve \|\nabla \psi\|_{L^2(\Omega)}\big) \; , 
\end{cases}
\end{equation} 
where $H^{1}_{\Gamma_{zs}^\ve}(\Omega_a^\ve)= \{ \psi \in H^{1}(\Omega_a^\ve): \, \psi =0 \text{ on } \Gamma_{zs}^\ve\}$.

We define  $V^\ve_D = \tilde R_{\Omega^\ve_a} V_D$ and, using the assumptions on $V_D$, obtain   that  $\operatorname{div } V^\ve_D=0$ in $\Omega_a^\ve$ and 
$ \| V^\ve_D \|_{L^2(\Omega^\ve_a)} \leq C \|V_D\|_{H^1(\Omega)}.
$

For a $\sigma >0$, a Lipschitz domain $\Sigma$,  and for  $\psi, \varphi \in L^2((0,\sigma)\times \Sigma)$ we shall  denote
$$
\langle \psi, \varphi\rangle_{\Sigma} = \int_\Sigma \psi \, \varphi \, dx  \qquad  \text{ and } \qquad \langle \psi, \varphi\rangle_{\Sigma,\sigma} = \int_0^\sigma\int_\Sigma \psi \, \varphi \, dx dt\; .
$$
For a bounded Lipschitz domain $\Sigma$ we shall use the notion of the space 
$$
H(\text{div}, \Sigma) = \{\vect{v} \in L^2(\Sigma) \text{ such that  } \text{ div }\vect{v} \in L^2(\Sigma) \} \; ,
$$
provided with the norm 
$$\| \vect{ v} \|^2_{H(\text{div}, \Sigma) }= \|\vect{v}\|^2_{L^2(\Sigma)} + \|\text{div }\vect{v}\|^2_{L^2(\Sigma)}\; . 
$$
We introduce also
\begin{eqnarray*}
V^\ve_{as}&=& \{ v\in H(\text{div}, \Omega_{as}^\ve): 
v\cdot \vect{n} = 0 \text{ on } \partial\Omega_{as}^\ve\cap (\partial\Omega_{aw}^\ve \cup \partial\Omega)\} \; , 
\\ 
V^\ve_a&=& \{ v\in H(\text{div}, \Omega_a^\ve):   v\cdot \vect{n} \in L^2(\Gamma^\ve_{zs}), \,   v\cdot \vect{n}=0 \text{ on } \partial \Omega\} \; ,
\\ 
V^\ve_z&=& \{ v\in H^1(\Omega_z^\ve):  v\times\vect{n} =0 \, \text{ on } \Gamma^\ve_{zs} \} \; .
\end{eqnarray*}
For $\phi\in V_{as}^\ve$, since  $\phi \in H(\text{div},\Omega_{as}^\ve)$ we have $\phi\cdot \vect{n} \in H^{-1/2}(\partial\Omega_{as}^\ve)$ and together with 
 $\phi\cdot \vect{n}=0$ on $\partial\Omega_{as}^\ve\setminus \Gamma_{as}^\ve$  obtain 
$\phi\cdot\vect{n} \in H^{-1/2}(\Gamma_{as}^\ve)$, \cite{Galvis}.  Considering the geometrical structure of $\Omega_{as}^\ve$ and the fact that $\phi\cdot \vect{n} =0$ on $\Gamma_{aw}^\ve$, we can extend $\phi$ by zero from $\Omega_{as}^\ve$ to  $\Omega_a^\ve$  with $\text{div } \phi \in L^2(\Omega_a^\ve)$ and shall  use the same notation for the extension as for the original function.\\

\noindent
We denote the spaces
{\small
\begin{eqnarray*}
\mathcal V^\ve&=&\{ (v_1, v_2, v_3) \in  V^\ve_z\times V^\ve_{a}\times V^\ve_{as}: \langle (v_1-v_2)\cdot \vect{n}, \phi\rangle_{\Gamma_z^\ve}=0  \text{ for } \phi \in L^2(\Gamma_z^\ve)\; ,\\
&& \hspace{4. cm} \langle (v_1-v_2 - v_3)\cdot \vect{n}, \psi \rangle_{H^{-1/2}(\Gamma_{as}^\ve), H^{1/2}(\Gamma_{as}^\ve)}=0 \, \,  \text{ for } \psi\in H^{1/2}(\Gamma_{as}^\ve) \} \; ,\\
\mathcal P^\ve&=&\{(p_1, p_2,p_3)\in L^2(\Omega_z^\ve)\times L^2(\Omega_a^\ve)\times L^2(\Omega_{as}^\ve):
    \langle p_1, 1\rangle_{\Omega_z^\ve} + \langle p_2, 1\rangle_{\Omega_a^\ve}+  \langle p_3, 1\rangle_{\Omega_{as}^\ve}=0\}\; ,
\end{eqnarray*}}%
with the norms
\begin{eqnarray*}
 &\|v\|_{\mathcal V^\ve} &= \|v_1\|_{H(\text{div}, \Omega_z^\ve)} +  \ve\|\nabla v_1\|_{L^2(\Omega_z^\ve)}
 + \|v_2\|_{H(\text{div}, \Omega_a^\ve)}  +
\ve^{1/2} \|v_2\cdot \vect{n}\|_{L^2(\Gamma_{zs}^\ve)} +\|v_3\|_{H(\text{div},\Omega_{as}^\ve)}\; ,\\
&\|p\|_{\mathcal P^\ve}&=  \|p_1\|_{L^2(\Omega_z^\ve)} + \|p_2\|_{L^2(\Omega_a^\ve)} + \|p_3\|_{L^2(\Omega_{as}^\ve)} \; .
 \end{eqnarray*}
Notice that for $\psi \in \mathcal V^\ve$  due to the assumptions on the normal components at the boundaries we have  $\text{div }\varphi \in L^2(\Omega)$, where $\varphi= \psi_1$ in $\Omega_z^\ve$, $\varphi= \psi_2$ in $\Omega_{aw}^\ve$ and $\varphi=\psi_2 + \psi_3$ in $\Omega_{as}^\ve$. We shall  consider   also $L^2(0,T; \mathcal V^\ve)$ and $L^2(0,T; \mathcal P^\ve)$ with the norms
\begin{align*}
 \|v\|_{L^2(0,T;\mathcal V^\ve)} &= \|v_1\|_{L^2(0,T; H(\text{div}, \Omega_z^\ve))} +  \ve\|\nabla v_1\|_{L^2((0,T)\times\Omega_z^\ve)}
 + \|v_2\|_{L^2(0,T; H(\text{div}, \Omega_a^\ve))}  \\
 &\qquad+\ve^{1/2} \|v_2\cdot \vect{n}\|_{L^2((0,T)\times\Gamma_{zs}^\ve)} +\|v_3\|_{L^2(0,T;H(\text{div},\Omega_{as}^\ve))}\; ,\\
\|p\|_{L^2(0,T;\mathcal P^\ve)}&=  \|p_1\|_{L^2((0,T)\times\Omega_z^\ve)} + \|p_2\|_{L^2((0,T)\times\Omega_a^\ve)} + \|p_3\|_{L^2((0,T)\times\Omega_{as}^\ve)}\; .
 \end{align*}
The corresponding  divergence-free space is denoted by
\begin{equation*}
\mathcal V^\ve_d= \{(v_1, v_2, v_3) \in \mathcal V^\ve: \text{div } v_1 = 0  \text{ in } \Omega_z^\ve\; , \, \text{div } v_2 = 0  \text{ in } \Omega_a^\ve\; , \,  \text{div } v_3 = 0  \text{ in } \Omega_{as}^\ve\}\; .
\end{equation*}


\section{Well-posedness  and a  priori  estimates}\label{A_priori_Estim}
We start with a weak formulation of the microscopic model   \eqref{velosity}-\eqref{BC3velosity}.
\begin{definition}
 Weak solution of   \eqref{velosity}-\eqref{BC3velosity} are velocity field
$(\vect{v}^\ve_z, \vect{v}^\ve_{a}-V_{D}^\ve, \vect{v}^\ve_{sp}) \in L^2(0,T;\mathcal V^\ve)$,  
and   pressure $p^\ve=(p^\ve_z, p^\ve_{a}, p^\ve_{sp}) \in L^2(0,T; \mathcal P^\ve)$,  
 satisfying 
 equations \eqref{velosity} with boundary conditions \eqref{BC1velosity}  and \eqref{BC3velosity} in  variational formulation
{
 \begin{align}\label{Stokes}
  \begin{aligned}
 \ve^2\langle 2\eta \operatorname{S} {\vect v}_z^\ve, \operatorname{S} \psi_1\rangle_{\Omega_z^\ve,T} -\langle p_z^\ve, \text{div }\psi_1
\rangle_{\Omega_z^\ve,T}  + \langle K^{-1}_{a, \ve}(x) {\vect v}^\ve_{a},  \psi_2\rangle_{\Omega_{a}^\ve,T}
  - \langle p_{a}^\ve,  \text{div } \psi_2 \rangle_{\Omega_{a}^\ve,T} \qquad\qquad\quad\\
+ \langle K^{-1}_{sp, \ve}(x){\vect v}^\ve_{sp}, \psi_3 \rangle_{\Omega^\ve_{as},T}
 -  \langle p_{sp}^\ve, \text{div } \psi_3\rangle_{\Omega^\ve_{as},T} 
+\ve
\langle  \kappa^\ve(x) {\vect v}_a^\ve\cdot \vect{n}
+  {\delta^\ve}(x)(c_s^\ve -c_{a}^\ve), \psi_2\cdot \vect{n} \rangle_{\Gamma^\ve_{zs},T} & =0 \; ,  \\
  \langle\text{div } \vect{v}_z^\ve, q_1 \rangle_{\Omega_z^\ve,T}+ 
 \langle \text{div } \vect{v}^\ve_a, q_2 \rangle_{\Omega_a^\ve,T}+ \langle\text{div } \vect{v}^\ve_{as}, q_3
  \rangle_{\Omega_{as}^\ve,T} &=0 \;  , 
\end{aligned} \hspace{-0.4 cm }
\end{align}}%
for $\psi=(\psi_1, \psi_2, \psi_3) \in  L^2(0,T;\mathcal V^\ve)$ and $q=(q_1, q_2, q_3)\in L^2(0,T;\mathcal P^\ve)$, and functions $c_l^\ve\in L^\infty((0,T)\times \Omega_l^\ve)$, $c^\ve_l \in L^2(0,T;H^1(\Omega^\ve_l))\cap H^1(0,T; L^2(\Omega^\ve_l))$, 
for $l=a,s$,  satisfying equations \eqref{Eq_Conc} with boundary conditions \eqref{Cell_membrBC}, \eqref{Cell_BC_other}, and \eqref{ExternalBC_N} in the weak form 
 \begin{align}\label{def_apop}
  \begin{aligned}
 \langle\partial_t c^\ve_a, \varphi_1 \rangle_{\Omega^\ve_a,T} +\langle D_a^\ve(t,x) \nabla  c^\ve_a  -
H_M({\vect v}^\ve_a)    c^\ve_a ,  \nabla\varphi_1 \rangle_{\Omega^\ve_a,T}  = 
\ve  \langle
\beta_s^\ve(t,x) \vartheta_{b,s}^\ve- \alpha_a^\ve(t,x) c^\ve_{a} \vartheta_{f,a}^\ve, \varphi_1  \rangle_{\Gamma^\ve_{zs}, T}\\
+\langle F^\ve_a(t,x,c^\ve_a), \varphi_1 \rangle_{\Omega^\ve_{a}, T} \; ,
\end{aligned}
\end{align}
and  
 \begin{align}\label{def_symp}
  \begin{aligned}
\langle \partial_t c^\ve_{s}, \varphi_2 \rangle_{\Omega^\ve_s, T}+ \langle
D_{s}^\ve(t,x) \nabla  c^\ve_{s}  -  H_M({\vect v}^\ve_{s}) c^\ve_{s},   \nabla \varphi_2 \rangle_{\Omega^\ve_s, T}  = 
 \ve \langle\beta_a^\ve(t,x)  \vartheta_{b,a}^\ve -
 \alpha_s^\ve(t,x) c^\ve_s \vartheta_{f,s}^\ve,\varphi_2 \rangle_{\Gamma^\ve_{zs}, T} \\
 +\langle F^\ve_s(t,x,c^\ve_s), \varphi_2 \rangle_{\Omega^\ve_{s}, T} \; ,
\end{aligned}
\end{align}
for all $\varphi_1 \in L^2(0,T;H^1(\Omega^\ve_a))$, $\varphi_2 \in L^2(0,T;H^1(\Omega^\ve_s))$, with  $c^\ve_a \to c^0_{a}$, $c^\ve_s \to c^0_{s}$ in $L^2$ as $t\to 0$, 
 and   transporter concentrations  $\vartheta_{j,l}^\ve 
 \in W^{1,\infty}(0,T; L^\infty(\Gamma^\ve_{zs}))$, with $j=f,b$ and $l=a,s$, satisfying 
 ordinary differential equations \eqref{transporter_eq} 
a.e. on $\Gamma_{zs}^\ve\times(0,T)$  together with    initial conditions  \eqref{init_cond}   a.e. on $\Gamma^\ve_{zs}$.
\end{definition}
In following we shall use the notation $\bar V_D^\ve=(0, V_D^\ve, 0)$ and $(\vect{v}^\ve-\bar V_D^\ve)= (\vect{v}^\ve_z, \vect{v}^\ve_{a}-V_{D}^\ve, \vect{v}^\ve_{sp})$.
 
\subsection{Existence and estimates for $(\vect{v}_s^\ve, p_s^\ve)$ and   $(\vect{v}_a^\ve, p^\ve_a)$}
First we shall prove Korn's type inequality satisfied by functions from the space  
\begin{eqnarray*}
 \tilde{\mathcal V}^\ve&=&\{ (v_1, v_2, v_3) \in  H^1(\Omega_z^\ve)\times V^\ve_{a}\times V^\ve_{as}: \langle (v_1-v_2)\cdot \vect{n}, \phi\rangle_{\Gamma_z^\ve}=0  \text{ for } \phi \in L^2(\Gamma_z^\ve)\; ,\\
&& \hspace{4. cm} \langle (v_1-v_2 - v_3)\cdot \vect{n}, \psi \rangle_{H^{-1/2}(\Gamma_{as}^\ve), H^{1/2}(\Gamma_{as}^\ve)}=0 \, \,  \text{ for } \psi\in H^{1/2}(\Gamma_{as}^\ve) \}  \; .
\end{eqnarray*}
\begin{lemma}\label{Korn_Estim}
For $\vect\psi \in \tilde{\mathcal  V}^\ve$ we have  the  following Korn's type inequality
\begin{align}\label{Korn2}
  \begin{aligned}
\|\vect \psi_1\|_{L^2(\Omega^\ve_z)} + \ve\|\nabla \vect \psi_1\|_{L^2(\Omega^\ve_z)} \leq C\big(\ve \|\operatorname{S} \vect \psi_1 \|_{L^2(\Omega^\ve_z)}+\ve^{1/2}||\vect \psi_1\times\vect{n}||_{L^2(\Gamma^\ve_{zs})} +
 \|\vect \psi_2\|_{L^2(\Omega_{a}^\ve)}   \\+
 \|\vect \psi_3\|_{L^2(\Omega_{as}^\ve)}+ \ve \|\operatorname{div } \vect \psi_1^\ve\|_{L^2(\Omega_z^\ve)} +\ve \|\operatorname{div } \vect\psi_2^\ve\|_{L^2(\Omega_{a}^\ve)}+ \ve \|\operatorname{div } \vect\psi_3^\ve\|_{L^2(\Omega_{as}^\ve)}\big)\; .
\end{aligned}
\end{align}
\end{lemma}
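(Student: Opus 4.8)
The plan is to reduce \eqref{Korn2} to an $\ve$-independent Korn inequality on the reference cell and then rescale. Since cell interiors are separated by cell walls, $\Omega_z^\ve$ is the disjoint union of the scaled copies $\ve Y_z^k$ with $\ve Y^k\subset\Omega$, each an exact copy of the reference geometry. It therefore suffices to establish, on a single reference cell, the estimate
\begin{align*}
\|\vect\psi_1\|_{L^2(Y_z)} + \|\nabla\vect\psi_1\|_{L^2(Y_z)} &\leq C\big(\|\operatorname{S}\vect\psi_1\|_{L^2(Y_z)} + \|\vect\psi_1\times\vect{n}\|_{L^2(\Gamma_{zs})} \\
&\quad + \|\vect\psi_2\|_{L^2(Y_a)} + \|\operatorname{div}\vect\psi_2\|_{L^2(Y_a)} + \|\vect\psi_3\|_{L^2(Y_{as})} + \|\operatorname{div}\vect\psi_3\|_{L^2(Y_{as})}\big),
\end{align*}
for triples $(\vect\psi_1,\vect\psi_2,\vect\psi_3)$ with $\vect\psi_1\in H^1(Y_z)$, $\vect\psi_2\in H(\text{div},Y_a)$, $\vect\psi_3\in H(\text{div},Y_{as})$ obeying the reference transmission constraints $(\vect\psi_1-\vect\psi_2)\cdot\vect{n}=0$ on $\Gamma_z$ and $(\vect\psi_1-\vect\psi_2-\vect\psi_3)\cdot\vect{n}=0$ on $\Gamma_{as}$. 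Applying this cell by cell to $y\mapsto\vect\psi(\ve y)$, summing the squares over all included cells, and undoing the substitution $x=\ve y$ reproduces \eqref{Korn2}: under this change of variables the $L^2$ volume norms pick up a factor $\ve^{3/2}$, an $x$-derivative equals $\ve^{-1}$ times a $y$-derivative, and surface $L^2$ norms carry a factor $\ve$, so that the unscaled terms $\|\operatorname{S}\vect\psi_1\|$, $\|\vect\psi_1\times\vect{n}\|$, $\|\vect\psi_2\|$, $\|\operatorname{div}\vect\psi_j\|$ acquire precisely the weights $\ve$, $\ve^{1/2}$, $1$ and $\ve$ appearing on the right of \eqref{Korn2}; I would check this weight matching term by term.

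For the reference estimate I would argue by contradiction and compactness. Suppose it fails; then there is a sequence $(\vect\psi_1^n,\vect\psi_2^n,\vect\psi_3^n)$ with left-hand side equal to $1$ and right-hand side tending to $0$. The normalization bounds $\{\vect\psi_1^n\}$ in $H^1(Y_z)$, so along a subsequence $\vect\psi_1^n\rightharpoonup\vect\psi_1$ in $H^1(Y_z)$ and, by Rellich, $\vect\psi_1^n\to\vect\psi_1$ strongly in $L^2(Y_z)$. Since $\operatorname{S}\vect\psi_1^n\to0$, the limit satisfies $\operatorname{S}\vect\psi_1=0$, i.e. $\vect\psi_1$ is an infinitesimal rigid motion $\vect{a}+\vect{b}\times y$.

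It then remains to show $\vect\psi_1\equiv0$. From $\|\vect\psi_2^n\|_{L^2}+\|\operatorname{div}\vect\psi_2^n\|_{L^2}\to0$ and the analogous bound for $\vect\psi_3^n$, these functions tend to $0$ in $H(\text{div})$, so their normal traces tend to $0$ in $H^{-1/2}$; combined with the constraints $\vect\psi_1^n\cdot\vect{n}=\vect\psi_2^n\cdot\vect{n}$ on $\Gamma_z$ and $\vect\psi_1^n\cdot\vect{n}=(\vect\psi_2^n+\vect\psi_3^n)\cdot\vect{n}$ on $\Gamma_{as}$, together with the weak convergence of the traces of $\vect\psi_1^n$, this forces $\vect\psi_1\cdot\vect{n}=0$ on $\Gamma_{zs}=\partial Y_z$. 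For the tangential part, $\vect\psi_1^n\times\vect{n}\to0$ in $L^2(\Gamma_{zs})$ and the traces converge (compactly in $L^2$), so $\vect\psi_1\times\vect{n}=0$ on $\Gamma_{zs}$. Hence $\vect\psi_1=0$ on all of $\partial Y_z$, and a rigid motion vanishing on a boundary piece is identically zero, so $\vect\psi_1\equiv0$. Consequently $\vect\psi_1^n\to0$ in $L^2(Y_z)$ and, by Korn's second inequality, $\|\nabla\vect\psi_1^n\|_{L^2}\le C(\|\operatorname{S}\vect\psi_1^n\|_{L^2}+\|\vect\psi_1^n\|_{L^2})\to0$, contradicting the normalization.

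The main obstacle is the correct treatment of the normal traces: because $\vect\psi_2,\vect\psi_3$ only belong to $H(\text{div})$, their normal components live in $H^{-1/2}$ and the transmission identities hold only in that duality, so the limit passage must be carried out in the weak $H^{-1/2}$ pairing, using continuity of the $H(\text{div})$ normal-trace map and the strong $H(\text{div})$ convergence $\vect\psi_2^n,\vect\psi_3^n\to0$. A secondary, purely bookkeeping difficulty is to confirm that the reference inequality is genuinely $\ve$-independent—guaranteed here since every cell counted in $\Omega_z^\ve$ is an exact rescaled copy of $Y$—and that the power counting in the rescaling matches every weight in \eqref{Korn2}.
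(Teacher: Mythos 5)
Your proposal is correct and follows essentially the same route as the paper: reduction to a reference-cell inequality proved by contradiction and compactness (weak $H^1$ convergence, vanishing $H(\operatorname{div})$ norms forcing zero normal traces via the $H^{-1/2}$ pairing, vanishing tangential traces, then Korn), followed by the $x=\ve y$ rescaling and summation over cells, with the same weight bookkeeping. If anything, your conclusion of the compactness step (Rellich plus Korn's second inequality applied to $\vect\psi_1^n$ to contradict the normalization) is spelled out more carefully than the paper's compressed version, which passes the unit normalization to the weak limit without comment; this is a minor polish, not a different method.
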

\begin{proof}
The proof follows the same lines as in \cite{Arbogast}.  First  we show the estimate for    $\hat{\vect \psi} \in  \mathcal V(Y)$, where $\mathcal V(Y)=\{ \vect\psi_1 \in H^1(Y_z), \, \vect \psi_2 \in \mathcal V_a(Y), 
\vect\psi_3 \in \mathcal V_{as}(Y),  \langle(\vect \psi_1-\vect \psi_2)\cdot \vect{n}, \phi_1\rangle_{\Gamma_z}=0 \text{ for } \phi_1 \in L^2(\Gamma_z),  
\text{ and } \langle(\vect \psi_1- \vect \psi_2-\vect\psi_{3})\cdot \vect{n}, \phi_2\rangle_{H^{-1/2}, H^{1/2}} =0
  \text{ for } \phi_2 \in H^{1/2}(\Gamma_{as}) \}$, 
  with  $\mathcal V_a(Y)= \{ v \in H(\text{div}, Y_a), v \cdot \vect{n} \in L^2(\Gamma_a)\}$ and $\mathcal V_{as}(Y)= \{ v \in H(\text{div}, Y_{as}), v\cdot\vect{n} = 0 \text{ on } \Gamma_{aw}\}$.  Then  scaling argument will imply inequality \eqref{Korn2} for $\vect\psi \in \tilde{\mathcal  V}^\ve$. 
Suppose it is not true that there exists a constant $\tilde C$ such that 
\begin{align}\label{assum_contra}
  \begin{aligned}
\| \hat{\vect \psi}_1 \|_{L^2(Y_z)} + \|\nabla \hat{\vect \psi}_1\|_{L^2(Y_z)} \leq  \tilde C 
\Big( \| \operatorname{S} \hat{\vect \psi}_1\|_{L^2(Y_z)} + \| \hat{\vect\psi}_1\times \vect{n}\|_{L^2(\Gamma_{zs})} + \|\hat{\vect\psi}_2\|_{L^2(Y_{a})}  +  \|\hat{\vect\psi}_3\|_{L^2(Y_{as})} \\+
\| \text{div } \hat{\vect\psi}_1 \|_{L^2(Y_z)} + \| \text{div } \hat{\vect\psi}_2 \|_{L^2(Y_{a})}+ \| \text{div } \hat{\vect\psi}_3 \|_{L^2(Y_{as})} \Big)\; .
\end{aligned}
\end{align}
Then there exists a sequence $\{ \hat{\vect{\psi}}^m\} \subset  \mathcal V(Y)$ such that 
\begin{equation}\label{equal_one}
\| \hat{\vect \psi}_{z}^m \|_{L^2(Y_z)} + \|\nabla \hat{\vect \psi}_{z}^m\|_{L^2(Y_z)}=1 
\end{equation}
and 
\begin{align}\label{Inequal_n}
  \begin{aligned}
 \|\operatorname{S} \hat{\vect \psi}^m_{1}\|_{L^2(Y_z)} + \| \hat{\vect\psi}^m_{1}\times \vect{n}\|_{L^2(\Gamma_{zs})} + \|\hat{\vect\psi}^m_{2}\|_{L^2(Y_{a})}  +  \|\hat{\vect\psi}^m_{3}\|_{L^2(Y_{as})}\\+  \| \operatorname{div } \hat{\vect\psi}^m_1 \|_{L^2(Y_z)} + \| \operatorname{div } \hat{\vect\psi}^m_{2} \|_{L^2(Y_{a})}
 + \| \operatorname{div } \hat{\vect\psi}^m_{3} \|_{L^2(Y_{as})}
  \leq \frac 1m \; .
\end{aligned}
\end{align}
The last inequality implies that 
\begin{equation*}
\hat{\vect \psi}^m_{2} \to 0  \quad \text{ in } \, \, H(\text{div}, Y_{a}) \qquad \text{ and  } \qquad  \hat{\vect \psi}^m_{3} \to 0  \quad \text{ in } \, \, H(\text{div}, Y_{as})\; ,
\end{equation*}
whereas, due to \eqref{equal_one}, there exists  $\hat{\vect \psi}_1 \in H^1(Y_z) $ such that 
\begin{equation*}
\hat{\vect\psi}^m_{1} \to \hat{\vect\psi}_1 \quad  \text{ weakly in } \,  H^1(Y_z).
\end{equation*}
We shall denote by $\hat{\vect \psi}$ the extension of $\hat{\vect \psi}_1$ by zero into $Y$. Since $\hat{\vect \psi}^m=\hat{\vect \psi}^m_1\chi_{Y_z} +\hat{\vect \psi}^m_2\chi_{Y_a}+\hat{\vect \psi}^m_3\chi_{Y_{as}}$ is bounded in $H(\text{div}, Y)$, it converges weakly and we  conclude that 
\begin{equation}
\hat{\vect \psi}^m \to \hat{\vect \psi} \quad  \text{ weakly in } \, \,  H(\text{div}, Y) \; .
\end{equation}
Using  the estimate for the norm  in the space dual to $H^{1/2}_{00}(\Gamma_z)$,  i.e. 
\begin{equation}
\| \hat{\vect\psi}^m_{2} \cdot \vect{n} \|_{(H^{1/2}_{00}(\Gamma_{zs}))^\prime} \leq C \|\hat{\vect\psi}^m_{2}\|_{H(\text{div}, Y_{a})} \to 0 \; , 
\end{equation}
and    the boundary condition $\hat{\vect\psi}^m_{1} \cdot \vect{n}= \hat{\vect\psi}^m_{2} \cdot \vect{n}$ on $\Gamma_z$, we obtain
$
\hat{\vect\psi}_1\cdot \vect{n}= \hat{\vect\psi}_2\cdot\vect{n} = 0 \text{ on } \Gamma_z.
$
Additionally we have that 
$$
\|\hat{\vect\psi}^m_{1} \times \vect{n} \|_{L^2(\Gamma_{zs})} \to  0,  \quad \text{ as } \, m \to \infty, 
\quad \text{ and \,  therefore } \quad 
\hat{\vect\psi}_1 \times \vect{n} =  0 \text{ on } \Gamma_{zs}. 
$$
Now the classical Korn inequality in $Y_z$ with $\hat{\vect\psi}_1=0$ on $\Gamma_z \subset \partial Y_z$ can be applied and we have
\begin{equation}\label{Korn_clasic}
\| \hat{\vect\psi}_1 \|_{L^2(Y_z)} + \|\nabla \hat{\vect\psi}_1\|_{L^2(Y_z)} \leq \hat C  \|\operatorname{S}_y \hat{\vect \psi}_1\|_{L^2(Y_z)}.
\end{equation} 
Considering  \eqref{equal_one} and \eqref{Inequal_n},  we obtain that the left-hand side in \eqref{Korn_clasic} is equal to one, whereas  the right-hand side is zero. This  yields the contradiction to teassumption  that there no such constant   $\tilde C$ for which \eqref{assum_contra} hold true. 
\\
Due to the geometrical assumption  on $\Omega$ we can write $\Omega=\cup_{j=1}^{J} \,  \ve (Y+ k^j)$ with some   $J\in \mathbb N$ and $k^j \in \mathbb Z^3$. We consider now $\vect \psi \in \mathcal V^\ve$ and  for $y \in Y$ define $\hat{\vect \psi}^j(y)= \vect\psi(\ve y + \ve k^j)$, and obtain    $\hat{\vect \psi}^j \in \mathcal V(Y)$.
Applying  \eqref{assum_contra}  for  each $Y_j= (Y+ k^j)$ we obtain estimate for $\hat{\vect \psi}^j$ 
\begin{align}\label{assum_contra_2}
  \begin{aligned}
\| \hat{\vect \psi}^j_1 \|_{L^2(Y^j_z)} + \|\nabla \hat{\vect \psi}_1^j\|_{L^2(Y^j_z)} \leq  \tilde C 
\big( \| \operatorname{S} \hat{\vect \psi}_1^j\|_{L^2(Y_z^j)} + \| \hat{\vect\psi}^j_1\times \vect{n}\|_{L^2(\Gamma^j_{zs})} + \|\hat{\vect\psi}^j_2\|_{L^2(Y^j_{a})}  +  \|\hat{\vect\psi}_3^j\|_{L^2(Y^j_{as})} \\+
\| \text{div } \hat{\vect\psi}_1^j\|_{L^2(Y^j_z)} + \| \text{div } \hat{\vect\psi}_2^j \|_{L^2(Y_{a}^j)}+ \| \text{div } \hat{\vect\psi}_3^j \|_{L^2(Y_{as}^j)} \big)\; .
\end{aligned}
\end{align}
Summation over $j=1,\ldots, J$ and change of variables $x=\ve(y+k^j)$ for~$y \in Y$~in~\eqref{assum_contra_2}~yield~\eqref{Korn2}.
\end{proof}

For the proof of existence and uniqueness of  $\vect{v}^\ve \in L^2(0,T;\mathcal V^\ve)$  and $p^\ve \in L^2(0,T;  \mathcal P^\ve)$  we shall define 
two bilinear  forms  $a^\ve(\cdot, \cdot): L^2(0,T;\mathcal V^\ve)\times L^2(0,T;\mathcal V^\ve) \to \mathbb R$ and $b(\cdot, \cdot):L^2(0,T;\mathcal V^\ve) \times L^2(0,T;\mathcal P^\ve)  \to \mathbb R$: 
\begin{align*}
 a^\ve(\varphi,\psi)&=  \ve^2\langle 2\eta \operatorname{S} {\varphi}_1, \operatorname{S} \psi_1\rangle_{\Omega_z^\ve, T}  + \langle K^{-1}_{a, \ve} {\varphi}_2,  \psi_2\rangle_{\Omega_{a}^\ve, T}
  + \langle K^{-1}_{sp, \ve} {\varphi}_3, \psi_3 \rangle_{\Omega^\ve_{as}, T}
  +\ve \langle  \kappa^\ve\,  {\varphi}_2\cdot \vect{n}, \psi_2\cdot \vect{n} \rangle_{\Gamma^\ve_{zs}, T}\; , \\
 b(\psi, q) &=\langle  \text{div } \psi_1, q_1 \rangle_{\Omega^\ve_{z}, T}+\langle  \text{div } \psi_2, q_2  \rangle_{\Omega^\ve_{a}, T}+ \langle \text{div } \psi_3, q_3 \rangle_{\Omega^\ve_{as}, T }\; ,
\end{align*}
for $\varphi, \,  \psi \in L^2(0,T;\mathcal V^\ve)$ and  $ q\in L^2(0,T;  \mathcal P^\ve)$.\\ 

\noindent
For $c_s^\ve, c_a^\ve\in L^2((0,T)\times\Gamma^\ve_{zs})$ we define a linear form $f^\ve(\cdot):  L^2(0,T; \mathcal V^\ve)\to \mathbb R$  
\begin{equation*}
 f^\ve (\psi)=\ve\langle  {\delta^\ve}(c_s^\ve -c_{a}^\ve),   \psi_2 \cdot \vect{n}\rangle_{\Gamma^\ve_{zs}, T}  
   \quad \text{ for } \psi\in  L^2(0,T;\mathcal  V^\ve)\; .
\end{equation*}

\begin{theorem}\label{Existence_Stokes}
 For $c^\ve_a, c^\ve_s \in L^2((0,T)\times \Gamma^\ve_{zs})$ and  $K_a$, $K_{sp}$, $v_{D}$ satisfying Assumption \ref{assump}  and for any $\ve >0$ there exists a unique solution 
 $(\vect{v}^\ve-\bar V_D^\ve) \in L^2(0,T;\mathcal V^\ve)$, $p^\ve \in L^2(0,T; \mathcal P^\ve)$ of \eqref{velosity} with transmission conditions \eqref{BC1velosity} and boundary conditions \eqref{BC3velosity} satisfying
{\small 
\begin{equation}\label{ap_estim}
\begin{aligned}
\hspace{-0.13 cm}  &\ve ||\nabla {\vect v}^\ve_z||_{L^2((0,T)\times \Omega^\ve_z)} +   ||{\vect v}^\ve_{s}||_{L^2((0,T)\times\Omega^\ve_{s})}
  + ||{\vect v}^\ve_a||_{L^2((0,T)\times\Omega^\ve_a)} + \ve^{\frac12} ||{\vect v}^\ve_a\cdot\vect{n}||_{L^2((0,T)\times\Gamma^\ve_{zs})} +
 \\
\hspace{-0.2 cm}  & ||p^\ve_s||_{ L^2((0,T)\times\Omega^\ve_s)/\mathbb R} 
  +  ||p^\ve_a||_{L^2((0,T)\times\Omega^\ve_a)/\mathbb R}
 \leq C\ve^{\frac 12}(\|c_a^\ve\|_{L^2((0,T)\times\Gamma_{zs}^\ve)}+\|c_s^\ve\|_{L^2((0,T)\times\Gamma_{zs}^\ve)}) 
  + C\|V_D\|_{H^{1}(\Omega)} \; , \hspace{-0.1 cm }
\end{aligned}
\end{equation}}
and there exist extensions $P_s^\ve$  of $p^\ve_s$ from $\Omega_s^\ve$ into $\Omega$ and $P_a^\ve$ of $p^\ve_a$ from $\Omega_a^\ve$ into $\Omega$, satisfying
 \begin{equation}\label{ap_estim_extension}
 ||P^\ve_s||_{ L^2(\Omega_T)/\mathbb R}+  ||P^\ve_a||_{L^2(\Omega_T)/\mathbb R}
 \leq C(\ve^{\frac 12}\|c_a^\ve\|_{L^2((0,T)\times\Gamma_{zs}^\ve)}+ 
 \ve^{\frac 12}\|c_s^\ve\|_{L^2((0,T)\times\Gamma_{zs}^\ve)}
  + \|V_D\|_{H^{1}(\Omega)})\; , 
\end{equation}
where a universal  constant $C$ is independent of $\ve$ and $\Omega_T= \Omega\times (0,T)$.\\
Additionally for $c^\ve_{a,i}, c^\ve_{s,i} \in L^2((0,T)\times \Gamma^\ve_{zs})$, with $i=1,2$, holds
\begin{align}\label{uniqueness_estim}
  \begin{aligned}
 &  ||{\vect v}^\ve_{s,1}-{\vect v}^\ve_{s,2} ||_{L^2((0,T)\times\Omega^\ve_{s})}
 + ||{\vect v}^\ve_{a,1} - {\vect v}^\ve_{a,2}||_{L^2((0,T)\times\Omega^\ve_a)} \\
 &\hspace{4. cm}\leq C  
 ( \ve^{\frac 12} \|c_{a,1}^\ve- c_{a,2}^\ve\|_{L^2((0,T)\times\Gamma_{zs}^\ve)}+ \ve^{\frac 12}\|c_{s,1}^\ve-c_{s,2}^\ve\|_{L^2((0,T)\times\Gamma_{zs}^\ve)})\; .
  \end{aligned}
  \end{align}
\end{theorem}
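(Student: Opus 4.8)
The plan is to recast the variational system \eqref{Stokes} as an abstract saddle-point problem and apply the theory of mixed problems \cite{Girault}. First I would lift the inhomogeneous boundary datum by $\bar V_D^\ve=(0,V_D^\ve,0)$; this is admissible because $\operatorname{div}V_D^\ve=0$ in $\Omega_a^\ve$, so that $b(\bar V_D^\ve,q)=0$ for every $q$. One then seeks $(\vect{v}^\ve-\bar V_D^\ve,p^\ve)$ in $L^2(0,T;\mathcal V^\ve)\times L^2(0,T;\mathcal P^\ve)$ satisfying $a^\ve(\vect{v}^\ve-\bar V_D^\ve,\psi)+b(\psi,p^\ve)=f^\ve(\psi)-a^\ve(\bar V_D^\ve,\psi)$ and $b(\vect{v}^\ve-\bar V_D^\ve,q)=0$. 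Continuity of $a^\ve$, $b$, $f^\ve$ and of $\psi\mapsto a^\ve(\bar V_D^\ve,\psi)$ is routine, using the boundedness of $K_a^{-1}$, $K_{sp}^{-1}$, $\kappa^\ve$, the trace inequality on $\Gamma_{zs}^\ve$, and the bounds on $V_D^\ve$ recorded after \eqref{restriction_a}.

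The first substantial step is coercivity of $a^\ve$ on the kernel of $b$, which here coincides with the divergence-free space $\mathcal V_d^\ve$. On $\mathcal V_d^\ve$ the $H(\operatorname{div})$-norms reduce to $L^2$-norms and, crucially, every $\psi_1\in V_z^\ve$ satisfies $\psi_1\times\vect{n}=0$ on $\Gamma_{zs}^\ve$, so the tangential boundary term and all divergence terms on the right-hand side of Lemma~\ref{Korn_Estim} vanish. That lemma then bounds $\|\psi_1\|_{L^2(\Omega_z^\ve)}+\ve\|\nabla\psi_1\|_{L^2(\Omega_z^\ve)}$ by $C(\ve\|\operatorname{S}\psi_1\|_{L^2(\Omega_z^\ve)}+\|\psi_2\|_{L^2(\Omega_a^\ve)}+\|\psi_3\|_{L^2(\Omega_{as}^\ve)})$. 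Using the uniform ellipticity of $K_a,K_{sp}$ and the positivity of $\kappa$, I would bound $a^\ve(\psi,\psi)$ below by $c(\ve^2\|\operatorname{S}\psi_1\|^2+\|\psi_2\|^2+\|\psi_3\|^2+\ve\|\psi_2\cdot\vect{n}\|_{L^2(\Gamma_{zs}^\ve)}^2)$; combined with the Korn estimate this yields $a^\ve(\psi,\psi)\ge c\|\psi\|_{\mathcal V^\ve}^2$ with $c$ independent of $\ve$, since the $\ve$-weights in the norm of $\mathcal V^\ve$ match exactly those appearing in $a^\ve$.

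The hard part will be the inf-sup (Babu\v{s}ka--Brezzi) condition for $b$ together with the $\ve$-uniform pressure bound. For fixed $\ve$ one must construct, for each $q=(q_1,q_2,q_3)\in\mathcal P^\ve$, a field $\psi\in\mathcal V^\ve$ realizing $\operatorname{div}\psi_i=q_i$ in each subdomain while respecting the transmission constraints on $\Gamma_z^\ve,\Gamma_{as}^\ve$ and the condition $\psi_1\times\vect{n}=0$, so that $b(\psi,q)\ge\beta\|q\|_{\mathcal P^\ve}\|\psi\|_{\mathcal V^\ve}$. I would build this by combining Bogovskii-type right inverses of the divergence on the reference cell $Y$ with the restriction operator of \cite{Tartar1980} already introduced in \eqref{restriction_a}. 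The $\ve$-uniform pressure control demanded by \eqref{ap_estim} and \eqref{ap_estim_extension} would then follow by extending $p_s^\ve,p_a^\ve$ to all of $\Omega$ through the dual of that restriction operator: testing the weak equation against $\tilde R_{\Omega_a^\ve}^\ve\phi$ for $\phi\in H^1_0(\Omega)$ and invoking the $\ve$-scaled bounds in \eqref{restriction_a} transfers the estimate to the extended pressures with a constant independent of $\ve$, as in \cite{Tartar1980,Allaire1989,Arbogast}.

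With coercivity and the inf-sup condition established, \cite{Girault} furnishes a unique $(\vect{v}^\ve-\bar V_D^\ve,p^\ve)$. The estimate \eqref{ap_estim} then follows by testing \eqref{Stokes} with $\psi=\vect{v}^\ve-\bar V_D^\ve$: coercivity controls the left-hand side, while $|f^\ve(\psi)|\le C\ve^{1/2}(\|c_a^\ve\|_{L^2(\Gamma_{zs}^\ve)}+\|c_s^\ve\|_{L^2(\Gamma_{zs}^\ve)})\|\psi\|_{\mathcal V^\ve}$ and $|a^\ve(\bar V_D^\ve,\psi)|\le C\|V_D\|_{H^1(\Omega)}\|\psi\|_{\mathcal V^\ve}$, so Young's inequality and absorption yield the velocity bounds, and the inf-sup/extension step yields the pressure bounds with the stated $\ve^{1/2}$ weights on the concentration terms. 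Finally \eqref{uniqueness_estim} is immediate from linearity: the difference of two solutions with data $(c_{a,1}^\ve,c_{s,1}^\ve)$ and $(c_{a,2}^\ve,c_{s,2}^\ve)$ solves the same system with no $V_D$-contribution and right-hand side $f^\ve$ formed from $c_{s,1}^\ve-c_{s,2}^\ve$ and $c_{a,1}^\ve-c_{a,2}^\ve$, so the energy estimate applied to the difference gives the Lipschitz bound. I expect the inf-sup and pressure-extension step to be the only genuinely delicate point, the remaining estimates being routine once the $\ve$-uniform coercivity is in place.
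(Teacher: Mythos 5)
Your proposal follows essentially the same route as the paper's proof: the saddle-point reformulation with the $\bar V_D^\ve$ lift, $\ve$-uniform coercivity of $a^\ve$ on the divergence-free space via the Korn-type Lemma~\ref{Korn_Estim}, a fixed-$\ve$ inf-sup condition built from right inverses of the divergence, existence and uniqueness from \cite{Girault}, energy-type testing for \eqref{ap_estim} and \eqref{uniqueness_estim}, and $\ve$-uniform pressure bounds obtained separately by extending $\nabla p_a^\ve$, $\nabla p_s^\ve$ through duality with restriction operators. The one detail you gloss over is that the symplast pressure $P_s^\ve$ needs a second restriction operator $R^\ve_{\Omega_s^\ve}$ into $\Omega_s^\ve$ compatible with the Stokes structure and the condition $v\times\vect{n}=0$ (the paper constructs it via the modified cell problem \eqref{restriction_Os}), but this is precisely what your citation of \cite{Allaire1989} supplies.
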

\begin{proof}
We can reformulate    Stokes-Darcy problem  \eqref{Stokes}  as
\begin{eqnarray}\label{Stokes_2}
\begin{cases}
 a^\ve(\vect{v}^\ve-\bar V_D^\ve, \psi) + b(\psi, p^\ve) =-a^\ve(\bar V_D^\ve, \psi)- f^\ve(\psi) & \quad \text{ for } \psi \in L^2(0,T;\mathcal V^\ve)\; ,\\
   b(\vect{v}^\ve, q) = 0 & \quad \text{ for }  q\in L^2(0,T; \mathcal P^\ve)\; ,
   \end{cases}
\end{eqnarray}
and apply the abstract theory of mixed problems, \cite{Girault},~to show the existence of a~unique~solution~of~\eqref{Stokes_2}.\\
\noindent
Considering  $\psi \in L^2(0,T; \mathcal V^\ve_d)$ , using  ${\psi}_1 \times \vect{n} =0 $ on  $(0,T)\times\Gamma_{zs}^\ve$ and applying   inequality \eqref{Korn2}, we obtain 
\begin{align*}
 a^\ve(\psi,\psi)&\geq C \big(\ve^2\| \operatorname{S} \psi_1\|^2_{L^2((0,T)\times\Omega_z^\ve)} + \| \psi_2\|^2_{L^2((0,T)\times\Omega_{a}^\ve)} + \|\psi_3\|^2_{L^2((0,T)\times\Omega_{as}^\ve)} 
 + \ve\|\psi_2\cdot\vect{n}\|^2_{L^2((0,T)\times\Gamma_{zs}^\ve)}\big)\\
 &\geq C \|\psi\|^2_{L^2(0,T;\mathcal V^\ve)}\; ,
\end{align*}
and conclude  that $a^\ve(\cdot, \cdot)$ is $L^2(0,T;\mathcal V_d^\ve)$-elliptic.\\
The bilinear forms $a^\ve(\cdot, \cdot)$ and $b(\cdot, \cdot)$ are  continuous  with constants independent of $\ve$, i.e. for $\psi,\, \varphi \in L^2(0,T;\mathcal V^\ve)$ and $q\in L^2(0,T;\mathcal P^\ve)$, applying  H\"older's inequality,  we have 
\begin{align}\label{a_bound}
\begin{aligned}
 |a^\ve(\psi,\varphi)| & \leq  C \big(\ve^2\| \operatorname{S} \psi_1\|_{L^2((0,T)\times \Omega_z^\ve)}\| \operatorname{S} \varphi_1\|_{L^2((0,T)\times\Omega_z^\ve)} + \| \psi_{2}\|_{L^2((0,T)\times\Omega_{a}^\ve)} \| \varphi_{2}\|_{L^2((0,T)\times\Omega_{a}^\ve)} \\
  &\quad +\| \psi_{3}\|_{L^2((0,T)\times\Omega_{as}^\ve)} \| \varphi_{3}\|_{L^2((0,T)\times\Omega_{as}^\ve)}
+ \ve\| \psi_{2}\cdot\vect{n}\|_{L^2((0,T)\times\Gamma_{zs}^\ve)}\| \varphi_{2}\cdot\vect{n}\|_{L^2((0,T)\times\Gamma_{zs}^\ve)}
\big)\\
& \leq C \|\psi\|_{L^2(0,T;\mathcal V^\ve)} \|\varphi\|_{L^2(0,T;\mathcal V^\ve)}\; ,
\end{aligned}
\end{align}
and 
\[
\begin{split}
|b(\psi, q)| & \leq (\|\operatorname{div} \psi_1\|_{L^2((0,T)\times\Omega_z^\ve)} + \|\operatorname{div} \psi_2\|_{L^2((0,T)\times\Omega_a^\ve)} +\|\operatorname{div} \psi_3\|_{L^2((0,T)\times\Omega_{as}^\ve)} )\times\\
 & \qquad \times(\|q_1\|_{L^2((0,T)\times\Omega_z^\ve)} +\|q_2\|_{L^2((0,T)\times\Omega_a^\ve)} + \|q_3\|_{L^2((0,T)\times\Omega_{as}^\ve)}) \leq \|\psi\|_{L^2(0,T;\mathcal V^\ve)} \|q\|_{L^2(0,T;\mathcal P^\ve)} \; .
\end{split}
\]
Now we shall prove that $b(\cdot, \cdot)$ satisfies the inf-sup condition.
For  any $q\in L^2(0,T;\mathcal P^\ve)\setminus\{0\}$  we shall construct $\psi \in L^2(0,T;\mathcal V^\ve)\setminus\{0\}$ such that 
\[
b(\psi, q) \geq C \|\psi\|_{L^2(0,T;\mathcal V^\ve)}\|q\|_{L^2(0,T;\mathcal P^\ve)}\; . 
\]
For given $q=(q_1, q_2, q_3) \in L^2(0,T; \mathcal P^\ve)$ we define $\tilde q_1 \in L^2\big((0,T)\times\Omega\big)$ and  $\tilde q_2 \in L^2\big((0,T)\times\Omega_s^\ve\big)$  as
\begin{eqnarray*}
\tilde q_1= q_2 \, \,  \text{ in  } (0,T)\times \Omega_a^\ve\; ,  \, &&  \tilde q_1=q_1+\frac 1{|\Omega_z^\ve|}\langle q_3, 1\rangle_{\Omega_{as}^\ve} \, \text{ in } (0,T)\times\Omega_z^\ve \; , \\
 \tilde q_2= q_3 \, \, \text{  in  } (0,T)\times\Omega_{as}^\ve \; , &&  \tilde q_2= -\frac 1{|\Omega_z^\ve|}\langle q_3, 1\rangle_{\Omega_{as}^\ve}\, \quad  \,  \text{  in } (0,T)\times\Omega_{z}^\ve\; . 
 \end{eqnarray*}
Then, due to
 $\langle \tilde q_1, 1 \rangle_\Omega=0$ and $\langle \tilde q_2, 1\rangle_{\Omega_{s}^\ve}=0$,  there exist $\tilde \psi_1 \in L^2(0,T;H^1_0(\Omega))$ and 
 $\tilde \psi_2 \in L^2(0,T;H^1(\Omega^\ve_s))$, see \cite[Corollary 2.4, Lemma 2.2]{Girault} or \cite[Lemma 2.4]{Temam}, such that
\begin{gather*}
\begin{aligned}
\operatorname{div} \tilde \psi_1 &=\tilde q_1\quad \text{ in }\,  \Omega\times(0,T)\; , & \qquad\tilde \psi_1 &=0 \quad \text{ on } \, \partial \Omega \times(0,T)\; ,  \\
\operatorname{div} \tilde \psi_2 &=\tilde  q_2 \quad \text{ in }\,  \Omega_s^\ve\times(0,T)\; , &\qquad \tilde \psi_2\cdot \vect{n}&= 0 \quad \text{ on } \partial\Omega_s^\ve\times(0,T)\; ,  \\
\end{aligned}\\
\tilde \psi_2\times \vect{n} = - \tilde \psi_1\times \vect{n}\quad  \text{ on } \Gamma_{zs}^\ve\times(0,T)\; , 
\end{gather*}
and satisfy estimates
\begin{eqnarray*}
 \|\tilde \psi_1\|_{L^2(0,T;H^1(\Omega))} &\leq & C\|\tilde q_1\|_{L^2((0,T)\times\Omega)}
\leq C(\|q_1\|_{L^2((0,T)\times\Omega_z^\ve)} +\|q_2\|_{L^2((0,T)\times\Omega_a^\ve)} + \|q_3\|_{L^2((0,T)\times\Omega_{as}^\ve)})\; ,\\ 
 \|\tilde \psi_2\|_{L^2(0,T;H^1(\Omega_s^\ve))}& \leq & C(\ve)\|\tilde q_2\|_{L^2((0,T)\times \Omega_s^\ve)}\leq C(\ve)\|q_3\|_{L^2((0,T)\times\Omega_{as}^\ve)}\; .
 \end{eqnarray*}
 Notice that   $\tilde \psi_1\in L^2(0,T;H^1_0(\Omega))$ ensures $\tilde \psi_1\times \vect{n} \in L^2(0,T; H^{1/2}(\Gamma_{zs}^\ve))$ and, therefore, also the existence of $\tilde \psi_2$ with   $\tilde \psi_2\times \vect{n} = - \tilde \psi_1\times \vect{n}  \text{ on } \Gamma_{zs}^\ve\times(0,T)$, whereas  $t\in (0,T)$ plays  the role of a parameter.
 
We define $\psi=(\psi_1, \psi_2, \psi_3)$ by $\psi_1=(\tilde \psi_1 + \tilde \psi_2)|_{\Omega_z^\ve}$, \, $\psi_2=\tilde \psi_1|_{\Omega_a^\ve}$, \, $\psi_3= \tilde \psi_2|_{\Omega_{as}^\ve}$. The  definitions  of $\tilde \psi_1$ and $\tilde \psi_2$ imply that 
 $\psi_2\cdot \vect{n} = 0$ on $\partial \Omega\times(0,T)$ and $\psi_3\cdot \vect{n}=0$ on $(\partial \Omega_s^\ve\cap \partial \Omega)\times (0,T)$. 
The properties  $\tilde \psi_1\in L^2(0,T; H^1(\Omega))$ and $\tilde \psi_2\cdot \vect{n} = 0$ on $\partial \Omega_s^\ve\times(0,T)$ and $\tilde \psi_2\times \vect{n} = -\tilde \psi_1 \times \vect{n}$ on $ \Gamma_{zs}^\ve\times(0,T)$ ensure
 $\psi_1\cdot \vect{n}=\psi_2\cdot \vect{n} $ on $\Gamma_z^\ve\times(0,T)$  and $\psi_1\times \vect{n}=0 $ on $\Gamma_{zs}^\ve\times(0,T)$, 
 as well as $\psi_1\cdot \vect{n} = (\psi_2 + \psi_3)\cdot \vect{n}$ on  $\Gamma_{as}^\ve\times(0,T)$ and 
 $\psi_3\cdot \vect{n} =0 $ on $\Gamma_{aw}^\ve\times(0,T)$. The regularity of $\tilde \psi_1$ and $\tilde \psi_2$ implies also that 
 $\psi_1 \in L^2(0,T; H^1(\Omega_z^\ve))$, $\psi_2 \in L^2(0,T; H(\text{div}, \Omega_a^\ve))$ and $\psi_3 \in L^2(0,T;H(\text{div}, \Omega_{as}^\ve))$. Thus  
$\psi=(\psi_1, \psi_2, \psi_2)\in  W^\ve\cap L^2(0,T;\mathcal V^\ve)$ with $W^\ve=L^2(0,T;H^1(\Omega_z^\ve))\times L^2(0,T;H^1(\Omega_{a}^\ve))\times L^2(0,T;H^1(\Omega_{as}^\ve))$ and  $W^\ve\cap L^2(0,T;\mathcal V^\ve)$ is continuously embedded in $L^2(0,T;\mathcal V^\ve)$ and for $\psi \in W^\ve\cap L^2(0,T;\mathcal V^\ve)$ we have  
\[
\|\psi\|_{L^2(0,T;\mathcal V^\ve)}\leq C \|\psi\|_{W^\ve} \leq  C(\ve) (\|\tilde q_1\|_{L^2((0,T)\times\Omega)}+ \|\tilde q_2\|_{L^2((0,T)\times\Omega_s^\ve)} )\leq C(\ve) \|q\|_{L^2(0,T;\mathcal P^\ve)} \; .
\]
Here we used that  $q_1=(\tilde q_1 + \tilde q_2)|_{\Omega_z^\ve}$, $q_2=\tilde q_1|_{\Omega_a^\ve}$ and $q_3=\tilde q_2|_{\Omega_{as}^\ve}$. Thus, the definition of $b(\cdot, \cdot)$ yields  
\begin{eqnarray*}
& b(\psi, q)  &=  \langle\tilde q_1 + \tilde q_2, \operatorname{div}(\tilde \psi_1  + \tilde \psi_2)\rangle_{\Omega_z^\ve, T}
 + \langle \tilde q_1, \operatorname{div} \tilde \psi_1\rangle_{\Omega_a^\ve, T} + \langle \tilde q_2, \operatorname{div} \tilde \psi_2 \rangle_{\Omega^\ve_{as}, T} \\
&& = \| q_1\|^2_{L^2((0,T)\times\Omega_z^\ve)}+\| q_2\|^2_{L^2((0,T)\times\Omega_{a}^\ve)}
+
\| q_3\|^2_{L^2((0,T)\times\Omega_{as}^\ve)}
 \geq C_1(\ve)( \| \tilde \psi_1\|_{L^2(0,T;H^1(\Omega))}\\
 && + \| \tilde \psi_2\|_{L^2((0,T)\times\Omega_{s}^\ve)}
 + \|\operatorname{div} \tilde \psi_2\|_{L^2((0,T)\times\Omega_{s}^\ve)} 
 +\ve\| \nabla \tilde \psi_2\|_{L^2((0,T)\times\Omega_{s}^\ve)})
 ( \| q_1\|_{L^2((0,T)\times\Omega_z^\ve)}\\
 && +\| q_2\|_{L^2((0,T)\times\Omega_{a}^\ve)}+
\| q_3\|_{L^2((0,T)\times\Omega_{as}^\ve)} ) \geq  C_2(\ve) \|\psi\|_{L^2(0,T;\mathcal V^\ve)}\| q\|_{L^2(0,T;\mathcal P^\ve)} \; ,
\end{eqnarray*}
and $b(\cdot,\cdot)$ satisfies the inf-sup condition
\begin{equation}\label{inf-sup}
\inf\limits_{q\in L^2(0,T; \mathcal P^\ve), q\neq 0} \sup\limits_{\psi\in L^2(0,T;\mathcal V^\ve), \psi\neq 0} \frac{b(\psi,q)}{\|\psi\|_{L^2(0,T;\mathcal V^\ve)} \|q\|_{L^2(0,T;\mathcal P^\ve)}} \geq C>0 \; .
\end{equation}
The regularity of  $c^\ve_s$, $c_a^\ve$ and   H\"older's inequality imply the boundedness  in  $L^2(0,T;\mathcal V^\ve)$ of  $f^\ve(\cdot)$
\begin{eqnarray*}
|f^\ve(\psi)|
&\leq& C \ve(\|c_a^\ve\|_{L^2((0,T)\times\Gamma_{zs}^\ve)}+ \|c_s^\ve\|_{L^2((0,T)\times\Gamma_{zs}^\ve)})\|\psi_2\|_{L^2((0,T)\times\Gamma_{zs}^\ve)}\\
 &\leq& C\ve^{1/2} (\|c_a^\ve\|_{L^2((0,T)\times\Gamma_{zs}^\ve)}+ \|c_s^\ve\|_{L^2((0,T)\times\Gamma_{zs}^\ve)} ) \|\psi\|_{L^2(0,T;\mathcal V^\ve)}\; .
\end{eqnarray*}
The uniform boundedness of  $V_D^\ve$ in $L^2(\Omega_a^\ve)$ and  estimate \eqref{a_bound} ensure the boundedness of the linear form
$a^\ve(\bar V_D^\ve,\cdot)$. 
Combining all estimates and applying   Corollary I.4.1 in \cite{Girault} imply the existence of  a unique solution $(u^\ve, p^\ve) \in L^2(0,T;\mathcal V^\ve) \times L^2(0,T;\mathcal P^\ve)$ of  \eqref{Stokes} for given $c_{a}^\ve, c^\ve_{s} \in L^2((0,T)\times \Gamma_{zs}^\ve)$.

For the proof of  a priori estimates \eqref{ap_estim} we start with the estimates for $\vect{v}_z^\ve$, $\vect{v}_a^\ve$ and $\vect{v}_{sp}^\ve$. 
We  consider $\psi_1={\vect v}_z^\ve$ in $\Omega_z^\ve\times(0,T)$ and $\psi_2={\vect v}_{a}^\ve- V_{D}^\ve$ in 
$\Omega_{a}^\ve\times(0,T)$,  and $\psi_3={\vect v}_{sp}^\ve$ in  $\Omega_{as}^\ve\times(0,T)$
as test functions in \eqref{Stokes} and, using    the divergence-free property,  obtain  
\begin{align}\label{estim_v}
  \begin{aligned}
\ve^2  \|\operatorname{S} {\vect  v}^\ve_z\|^2_{L^2((0,T)\times\Omega_z^\ve)}+ 
   \|{\vect v}^\ve_{sp}\|^2_{L^2((0,T)\times\Omega_{as}^\ve)}  +
  \|{\vect v}^\ve_a\|^2_{L^2((0,T)\times\Omega_a^\ve)}  + \ve \|{\vect v}^\ve_a\cdot\vect{n}\|^2_{L^2((0,T)\times\Gamma^\ve_{zs})} \\
 \leq   C  \ve (\|c^\ve_s\|^2_{L^2((0,T)\times\Gamma^\ve_{zs})} +\|c^\ve_a\|^2_{L^2((0,T)\times\Gamma^\ve_{zs})})
    + C \|V_{D}^\ve\|^2_{L^2(\Omega_a^\ve)}\; .
\end{aligned}
\end{align}
This  together with  $ \|V_{D}^\ve\|_{L^2(\Omega^\ve_a)} \leq C \|V_D\|_{H^{1}(\Omega)}$,   inequality \eqref{Korn2} and  ${\vect v}_z^\ve \times \vect{n} = 0 $  on $\Gamma_{zs}^\ve\times (0,T)$ imply   the estimates  for $\vect{v}_i^\ve$, with $i=z, a,$ or $sp$, stated in \eqref{ap_estim}.\\ 
For $c_{a,i}^\ve$ and $c^\ve_{s,i}$ and corresponding $\vect{v}_i^\ve$ with $i=1,2$, the linearity of the problem, the coercivity of $a^\ve(\cdot, \cdot)$ and the  boundedness of $f^\ve(\cdot)$ give
\begin{equation}\label{veloc_uniq_estim}
 \|\vect{v}_1^\ve - \vect{v}_2^\ve\|^2_{L^2(0,T;\mathcal V^\ve_d)} \leq C \ve(\|c_{a,1}^\ve- c_{a,2}^\ve\|^2_{L^2((0,T)\times\Gamma_{zs}^\ve)} +  \|c_{s,1}^\ve- c_{s,2}^\ve\|^2_{L^2((0,T)\times\Gamma_{zs}^\ve)}) \; .
\end{equation}
This will ensure  the uniqueness of a weak solution of  the coupled problem  \eqref{velosity}-\eqref{BC3velosity}. 

\noindent
By  Lemma~4 in  \cite{Tartar1980}  there exists a restriction operator $R^\ve_{\Omega_a^\ve} : H^1_0(\Omega) \to H^1_0(\Omega_{a}^\ve)$
\begin{equation}\label{restriction_a}
\begin{cases}
\psi\in H^1_0(\Omega_a^\ve)  &\Longrightarrow \, \, \, R^\ve_{\Omega_a^\ve} \psi = \psi \; , \\
\text{ div } \psi =0   &\Longrightarrow  \, \, \,  \text{div } R^\ve_{\Omega_a^\ve} \psi = 0 \; , \\
\|R^\ve_{\Omega_a^\ve} \psi \|_{L^2(\Omega^\ve_a)} &\leq  \, \, \,   C ( \|\psi\|_{L^2(\Omega)}+  \ve \|\nabla \psi\|_{L^2(\Omega)}) \; ,\\
\ve \|\nabla R^\ve_{\Omega_a^\ve} \psi \|_{L^2(\Omega^\ve_a)} &\leq  \, \,  \, C ( \|\psi\|_{L^2(\Omega)}+  \ve \|\nabla \psi\|_{L^2(\Omega)}) \; .
\end{cases}
\end{equation} 
As in   \cite[Theorem~2.3]{Allaire1989}, to define    $R^\ve_{\Omega_s^\ve} : H^1_0(\Omega) \to V(\Omega_{s}^\ve)$, where $V(\Omega_{s}^\ve)= \{ v \in H^1_0(\Omega_s^\ve): v\times \vect{n} =0 \text{ on }  \Gamma_{as}^\ve \}$, such that 
\begin{equation}\label{restriction_s}
\begin{cases}
\psi\in V(\Omega_s^\ve)  &\Longrightarrow \, \, \, R^\ve_{\Omega_s^\ve} \psi = \psi \;, \\
\text{ div } \psi =0   &\Longrightarrow  \, \, \,  \text{div } R^\ve_{\Omega_s^\ve} \psi = 0 \; , \\
\|R^\ve_{\Omega_s^\ve} \psi \|_{L^2(\Omega^\ve_s)} &\leq  \, \, \,   C ( \|\psi\|_{L^2(\Omega)}+  \ve \|\nabla \psi\|_{L^2(\Omega)}) \; , \\
\ve \|\nabla R^\ve_{\Omega_s^\ve} \psi \|_{L^2(\Omega^\ve_s)} &\leq  \, \,  \, C ( \|\psi\|_{L^2(\Omega)}+  \ve \|\nabla \psi\|_{L^2(\Omega)}) \; ,
\end{cases}
\end{equation} 
we consider for $u \in H^1(Y)$,  as in   \cite[Lemma~3.4]{Allaire1989},   a linear operator $Q: H^1(Y) \to H^1(Y)$, such that $Qu=0$ in $Y_a$, and a modified problem 
\begin{equation}\label{restriction_Os}
\begin{cases}
(w,q)\in H^1(Y_s)\times L^2(Y_s)/\mathbb R \; , \\
-\eta \,  \Delta w + \nabla q = \Delta u  &\text{ in }   Y_z\cup Y_{as} \; , \\
\operatorname{div} w = \operatorname{div} u + \frac 1 {|Y_s|} \int_{Y_a} \operatorname{div} u \, dy & \text{ in } Y_s \; , \\
w\times \vect{n} =0 \;  , \quad   [w\cdot \vect{n}]=0\; ,\quad  
[-2\eta \, (\operatorname{S} w \, \vect{ n} )\cdot \vect{n} + q]=0 & \text{ on }  \Gamma_{as} \; , \\
w = Qu + \left[\frac{\zeta_i}{ \langle \zeta_i, 1 \rangle_{\partial Y_i}}  \int_{\partial Y_i} (u-Qu)  \cdot e_i dy\right] e_i & \text{ on } \partial Y_{s,i}\cap \partial Y \; , \\
w=0  & \text{ on } \partial Y_{s}\setminus \partial Y \; ,
\end{cases}
\end{equation}
where  $\partial Y_i$, with  $i=-3,-2,-1, 1,2,3$, are the six  faces of the cube $Y$, such that   $\partial Y_k$, $\partial Y_{-k}$ are orthogonal to the unit vector $e_k$, with $k=1,2,3$. Here $\zeta_i \in C^\infty(\overline Y)$, with  $i=-3,-2,-1, 1,2,3$, satisfy  $\zeta_i \geq 0$, $\zeta_i \equiv \hspace{-0.4 cm}/   \hspace{0.2 cm} 0$   on $\partial Y_i$, $\zeta_i \equiv 0$ in $\overline Y_a$ and on $\partial Y_j$ for $j \neq i$,  and $\zeta_k|_{\partial Y_k} \equiv \zeta_{-k}|_{\partial Y_{-k}}$ for $k=1,2,3$.    The existence of such $\zeta_i$ is ensured by the geometrical structure
of $Y_s$.  
 Notice that the construction of the boundary conditions on $\partial Y_s$ ensures that $\int_{Y_s} \operatorname{div} w \, dy =\int_{\partial Y_s} w\cdot \vect{n} \, dy$. The existence of a unique solution of \eqref{restriction_Os} can be shown by applying the abstract theory of mixed problems, similar as for \eqref{Stokes_2}.
 Then  $R_{Y_s}$, given by  $R_{Y_s} u = w$, belongs to $\mathcal L (H^1(Y); H^1(Y_s))$ and in each cell $\ve Y^k \subset \Omega$,  with $k\in \mathbb Z^3$, we define  $R^\ve_{\Omega_s^\ve} u=R_{Y_s}(u(\ve y))(x/\ve)$ for 
$y \in Y^k$ and $x\in \ve Y^k_s$, respectively.\\

Equations  \eqref{velosity} imply that $\nabla p_s \in L^2(0,T;H^{-1}(\Omega_s^\ve))$
and $\nabla  p_a \in L^2(0,T;H^{-1}(\Omega_a^\ve))$.
We define the  extensions of $\nabla p^\ve_s$  and $\nabla p^\ve_a$ into $\Omega$ using the duality argument and
consider  $\mathcal F^\ve_s\in L^2(0,T;H^{-1}(\Omega))$ and   $\mathcal F^\ve_a \in L^2(0,T;H^{-1}(\Omega))$ given by 
\begin{eqnarray*}
\langle \mathcal F^\ve_s, \psi\rangle_{H^{-1}, H^1} = \langle \nabla p_s^\ve, R^\ve_{\Omega_s^\ve}\psi
\rangle_{L^2(0,T;H^{-1}(\Omega_s^\ve)), L^2(0,T;H^1_0(\Omega_s^\ve))} && \text{ for } \psi \in L^2(0,T; H^1_0(\Omega))\; ,\\
\langle \mathcal F^\ve_a, \psi\rangle_{H^{-1}, H^1} =  \langle \nabla p_a^\ve, R^\ve_{\Omega_a^\ve}\psi\rangle_{L^2(0,T;H^{-1}(\Omega_a^\ve)), L^2(0,T; H^1_0(\Omega_a^\ve))} && \text{ for } \psi \in L^2(0,T; H^1_0(\Omega)) \; ,
\end{eqnarray*}
where $\langle \cdot, \cdot \rangle_{H^{-1}, H^1}:= \langle \cdot, \cdot\rangle_{L^2(0,T;H^{-1}(\Omega)), L^2(0,T;H^1_0(\Omega))}$.
Applying  the estimates for $R^\ve_{\Omega_s^\ve}(\psi)$ and $R^\ve_{\Omega_a^\ve}(\psi)$ in \eqref{restriction_a} and \eqref{restriction_s}, we obtain from  \eqref{velosity}  for $\psi \in L^2(0,T;H^1_0(\Omega))$ that 
\begin{align}\label{estim_F_s}
  \begin{aligned}
\langle \mathcal F^\ve_s, \psi\rangle_{H^{-1},H^1} & = 
\langle \ve^2 \eta \nabla \vect{v}_z^\ve, \nabla (R^\ve_{\Omega^\ve_s} \psi) \rangle_{\Omega_z^\ve,T} + \langle K^{-1}_{sp}\vect{v}^\ve_{sp}, R^\ve_{\Omega^\ve_{s}} \psi \rangle_{\Omega_{as}^\ve,T} \\
& \leq  C \left(\ve \|\nabla \vect{v}_z^\ve\|_{L^2((0,T)\times\Omega_z^\ve)} +  \|\vect{v}_{sp}^\ve\|_{L^2((0,T)\times\Omega_{as}^\ve)}\right)\left( \|\psi\|_{L^2(\Omega_T)}+ \ve  \|\nabla \psi\|_{L^2(\Omega_T)}\right)\; ,\,\, 
\end{aligned}
\end{align}
where $\Omega_T= (0,T)\times\Omega$,  and 
\begin{eqnarray}\label{estim_F_a}
\langle \mathcal F^\ve_a, \psi\rangle_{H^{-1}, H^1}= 
\langle  K^{-1}_{a} \vect{v}_a^\ve, R^\ve_{\Omega^\ve_a} \psi \rangle_{\Omega_a^\ve,T} \leq  C \|\vect{v}_a^\ve\|_{L^2((0,T)\times\Omega_a^\ve)} \left( \|\psi\|_{L^2((\Omega_T)}+ \ve  \|\nabla \psi\|_{L^2(\Omega_T)}\right) \; .
\end{eqnarray}
Thus for each $\ve >0$ we  have that  $\mathcal F^\ve_a$ and $\mathcal F^\ve_s$ are bounded functionals on $L^2(0,T;H^1_0(\Omega))$.\\

For  $\psi\in L^2(0,T;H^1_0(\Omega_l^\ve))$ with $\psi\times \vect{n}=0$ on $\Gamma_{as}^\ve\times (0,T)$,  where $l=a,s$,  the properties of $R^\ve_{\Omega_l^\ve}$ imply $R^\ve_{\Omega_l^\ve}(\psi)=\psi$  and 
$\mathcal F^\ve_l|_{\Omega_l^\ve}= \nabla p^\ve_l$. We have also
$\text{div }R^\ve_{\Omega_l^\ve}(\psi)=0$, for $l=a,s$,  provided $\text{div } \psi =0$, and the orthogonality property ensures that 
$\mathcal F^\ve_s$ and $\mathcal F^\ve_a$ are the gradients  with respect to $x$ of functions in $L^2((0,T)\times\Omega)$. It means that 
 $\mathcal F^\ve_s$ and $\mathcal F^\ve_a$ are continuations of $\nabla p^\ve_a$ and $\nabla p^\ve_s$ to $\Omega$, respectively, and 
\[ 
\mathcal F^\ve_s = \nabla P^\ve_s\; ,  \quad \mathcal F^\ve_a = \nabla P^\ve_a \qquad \text{  with } \quad P^\ve_s, P^\ve_a \in L^2((0,T)\times\Omega)/\mathbb R\; .
\]
We have also an explicit formula for the extension $P_l^\ve$, with  $l=s$ or $a$, see \cite{Allaire1989, hornung},  for $t\in (0,T)$
\[
P_l^\ve(t, x)=
\begin{cases}
p_l^\ve(t, x) & \text{ in } \, \Omega_l^\ve\; , \\
\frac 1 {|\ve Y_l^{k}|}\int_{\ve Y_l^{k} } p^\ve_l(t,x) dx  & \text{ in } \ve Y^k \setminus \ve\overline {Y_l^k} \text{ for  } k\in \mathbb Z^3 \text{ such that }  \ve Y^k\cap \Omega\neq\emptyset \; .
\end{cases} 
\]
Applying now estimates  \eqref{estim_F_s}, \eqref{estim_F_a} together with \eqref{estim_v} and \eqref{Korn2}, and using the estimate of $L^2$-norm by $H^{-1}$-norm, see \cite{Girault, Temam}, give 
\begin{align*}
 ||P^\ve_s||_{L^2((0,T)\times\Omega)/\mathbb R} &\leq C_1  ||P^\ve_s||_{L^2(0,T;H^{-1}(\Omega))} \leq C_2 (\ve\|\nabla\vect{v}^\ve_z\|_{L^2((0,T)\times\Omega_z^\ve)} + \| \vect{v}^\ve_{sp}\|_{L^2((0,T)\times\Omega_{as}^\ve)} )\\
&\leq C_3  \ve^{1/2}( \|c^\ve_s\|_{L^2((0,T)\times\Gamma^\ve_{zs})}+\|c^\ve_a\|_{L^2((0,T)\times\Gamma^\ve_{zs})})
    + C_4 \|V^\ve_{D}\|_{L^2(\Omega_a^\ve)}\;  , \\
 ||P^\ve_a||_{L^2((0,T)\times\Omega)/\mathbb R} & \leq C_1  ||P^\ve_a||_{L^2(0,T;H^{-1}(\Omega))} \leq C_2 
\| \vect{v}^\ve_a\|_{L^2((0,T)\times\Omega_a^\ve)}\\
&\leq C_3  \ve^{1/2}( \|c^\ve_s\|_{L^2((0,T)\times\Gamma^\ve_{zs})}+\|c^\ve_a\|_{L^2((0,T)\times\Gamma^\ve_{zs})})
    + C_4 \|V_{D}^\ve\|_{L^2(\Omega_a^\ve)}\; .
\end{align*}
The last estimates together with  $ \|V_{D}^\ve\|_{L^2(\Omega_a^\ve)} \leq C \|V_D\|_{H^{1}(\Omega)}$ and the definition of $P^\ve_s$, $P^\ve_a$ ensure  \eqref{ap_estim_extension} and $L^2$-estimates for $p_s^\ve$ and $p_a^\ve$  in \eqref{ap_estim}.
\end{proof}


\subsection{Existence and estimates for $c_s^\ve$, $c_a^\ve$,   $\vartheta^\ve_{f,a}$, $\vartheta_{b,a}^\ve$,  $\vartheta_{f,s}^\ve$ and $\vartheta_{b,s}^\ve$} 

Using classical results \cite{Acerbi,CiorPaulin99}
we   can extend the domain of definition of solute concentrations ${c}^\varepsilon_l$  from a connected domain $\Omega_{l}^{\varepsilon}$  to $\Omega$,  where $l=a,s$. 
\begin{lemma}\label{extension}
1. For $c_l \in W^{1,p}(Y_l)$, with $1<p<\infty$, there exists an extension $\tilde c_l$ to $Y$, such that
$$
\|\tilde c_l\|_{L^p(Y)} \leq \Xi\|c_l\|_{L^p(Y_l)}\,\,\,\,\text{and}\,\,\,\,\|\nabla \tilde c_l\|_{L^p(Y)} \leq \Xi\|\nabla c_l\|_{L^p(Y_l)} .
$$
2. There exists an extension ${\tilde{c}^{\varepsilon}_l}$ of ${c}_l^{\varepsilon}$ from
$W^{1,p}(\Omega_{l}^{\varepsilon})$ into $W^{1,p}(\Omega)$ such that 
\[
\|{\tilde{c}_l^{\varepsilon}}\|_{L^{p}(\Omega)}\leq\Xi\|{c}_l^{\varepsilon}\|_{L^{p}(\Omega_{l}^{\varepsilon})},\mbox{ and }
\|\nabla {\tilde{c}^{\varepsilon}_l}\|_{L^{p}(\Omega)}\leq\Xi\|\nabla{c}_l^{\varepsilon}\|_{L^{p}(\Omega_{l}^{\varepsilon})},
\]
where the constant $\Xi$ depends on $Y$ and  $Y_l$ only. 
\end{lemma}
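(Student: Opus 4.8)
The plan is to prove everything first on the fixed reference cell $Y$ and then recover the $\varepsilon$-statement by periodic rescaling and gluing, following \cite{Acerbi, CiorPaulin99}. For part~1, note that each $Y_l$, $l=a,s$, is a bounded domain with Lipschitz boundary (with $\partial Y_z$ even smooth), so Calderón--Stein theory provides a linear total extension operator $\mathcal E_l$ that is bounded \emph{simultaneously} as a map $L^p(Y_l)\to L^p(\mathbb R^3)$ and $W^{1,p}(Y_l)\to W^{1,p}(\mathbb R^3)$, with operator norms depending only on $Y_l$. I would then restrict to $Y$; the only thing to upgrade is that the naive operator yields full-norm bounds, whereas the lemma asks for the two \emph{separated} estimates.

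The separation is achieved by subtracting the mean. Writing $\bar c_l = |Y_l|^{-1}\int_{Y_l} c_l\,dy$ and setting $\tilde c_l := \mathcal E_l(c_l-\bar c_l)+\bar c_l$, one checks $\tilde c_l=c_l$ on $Y_l$, so this is an extension. For the $L^p$ bound, the $L^p$-continuity of $\mathcal E_l$ gives $\|\mathcal E_l(c_l-\bar c_l)\|_{L^p(Y)}\le \Xi\|c_l-\bar c_l\|_{L^p(Y_l)}$, and since $|\bar c_l|\,|Y_l|^{1/p}\le \|c_l\|_{L^p(Y_l)}$ the two terms combine to $\|\tilde c_l\|_{L^p(Y)}\le \Xi\|c_l\|_{L^p(Y_l)}$ with no gradient on the right. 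For the gradient bound, $\nabla \tilde c_l=\nabla\mathcal E_l(c_l-\bar c_l)$, so the $W^{1,p}$-continuity gives $\|\nabla\tilde c_l\|_{L^p(Y)}\le \Xi\|c_l-\bar c_l\|_{W^{1,p}(Y_l)}$, and the Poincaré--Wirtinger inequality on the connected Lipschitz domain $Y_l$ absorbs the zeroth-order term, $\|c_l-\bar c_l\|_{L^p(Y_l)}\le C\|\nabla c_l\|_{L^p(Y_l)}$, leaving the clean estimate $\|\nabla\tilde c_l\|_{L^p(Y)}\le \Xi\|\nabla c_l\|_{L^p(Y_l)}$. Both constants depend only on $Y$ and $Y_l$.

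For part~2 I would transplant $\mathcal E_l$ to each periodicity cell. Since $\Omega=\bigcup_j \varepsilon Y^j$ is a union of \emph{entire} cells, I set, on $\varepsilon Y^k$, $\tilde c_l^\varepsilon(x)=\big[\mathcal E_l(c_l^\varepsilon(\varepsilon\,\cdot+\varepsilon k))\big]\big((x-\varepsilon k)/\varepsilon\big)$; because $\mathcal E_l$ leaves the values on $Y_l$ unchanged, each local piece agrees with $c_l^\varepsilon$ on $\Omega_l^\varepsilon$. The crucial point for $\varepsilon$-uniformity is that the change of variables $x=\varepsilon y$ scales $L^p$ norms by $\varepsilon^{3/p}$ and gradient norms by $\varepsilon^{3/p-1}$, and these factors appear identically on both sides of the cell estimates of part~1, hence cancel; summing the $p$-th powers of the cellwise inequalities over $j$ then reproduces both estimates on $\Omega$ with the \emph{same} constant $\Xi$, independent of $\varepsilon$.

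I expect the main obstacle to lie not in the scaling bookkeeping but in the cross-cell compatibility needed for $\tilde c_l^\varepsilon$ to genuinely belong to $W^{1,p}(\Omega)$: the cellwise extensions must match, with no spurious jump, across every interface $\partial(\varepsilon Y^k)$. Where the solid phase $Y_l$ fills a neighborhood of $\partial Y$ this is automatic, since there $\tilde c_l^\varepsilon=c_l^\varepsilon$ on the connected set $\Omega_l^\varepsilon$; but for the perforations that reach $\partial Y$ (as happens for the wall component $Y_{aw}$ in the case $l=s$) a plain Stein extension need not respect the periodic matching, and it is exactly here that I would invoke the compatible operator constructed in \cite{Acerbi, CiorPaulin99} in place of the generic one.
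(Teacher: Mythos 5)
Your cell-level argument for part~1 is correct, and it is the standard way to obtain the two \emph{separated} estimates: a Calder\'on--Stein operator on the Lipschitz domain $Y_l$, mean subtraction to decouple the $L^p$ bound from the gradient, and Poincar\'e--Wirtinger on the connected set $Y_l$ to absorb the zeroth-order term in the gradient bound. Your scaling bookkeeping in part~2 is also right, and you correctly identify that cell-interface compatibility is the crux at which the generic Stein operator must be replaced by the compatible operator of \cite{Acerbi, CiorPaulin99}. On that point you and the paper are on the same footing: the paper's proof of this lemma consists precisely of verifying the geometric hypotheses (for $l=a$ the holes $\varepsilon Y_z^k$ are interior to the cells, have smooth boundary, and satisfy $\Gamma_{zs}^\varepsilon\cap\partial\Omega=\emptyset$; for $l=s$ the set $\Omega_s^\varepsilon$ is a connected Lipschitz domain) and then citing these same references, so your proposal is, if anything, more explicit than the paper about why the cited operators deliver the stated form of the estimates.

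There is, however, one genuine omission, and it concerns a point the paper does address. For $l=s$ the perforations $\varepsilon Y_{aw}^k$ reach not only the cell faces $\partial(\varepsilon Y^k)$ but also the outer boundary $\partial\Omega$, because $\Omega$ is a union of entire cells and $Y_{aw}$ touches $\partial Y$. The extension operator of \cite{Acerbi} for a connected periodic phase is built from local estimates that need the function on a slightly enlarged neighborhood of each point of the connected phase; across $\partial\Omega$ no such data exist, so that theorem only yields bounds on the interior set $\{x\in\Omega:\ \operatorname{dist}(x,\partial\Omega)>k_0\varepsilon\}$, not the claimed extension into $W^{1,p}(\Omega)$ with $\varepsilon$-uniform constants. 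Your proposal stops at the cell-interface matching and never confronts this boundary layer. The paper closes it by exploiting the specific geometry of $\Omega_s^\varepsilon$ near $\partial\Omega$: one first extends $c_s^\varepsilon$ by reflection in the tangential directions near $\partial\Omega_s^\varepsilon\cap\partial\Omega$ (see also \cite{Ptashnyk_hairy}), after which the periodic extension applies up to the boundary. Note that for $l=a$ this issue does not arise, since $Y_a$ contains a neighborhood of $\partial Y$ and the holes stay away from $\partial\Omega$ --- consistent with your remark that gluing is automatic in that case.
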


 Due to the geometric assumptions on  $\Omega_a^\ve$, 
 holes in the domain do not touch each other,   have smooth boundary and do not touch the boundary $\partial \Omega$, i.e. 
 $\Gamma_{zs}^\ve \cap \partial \Omega= \emptyset$. Therefore,  
 classical extension results \cite{Acerbi, CiorPaulin99}
apply to $c^{\ve}_a$.
Due to the structural assumptions,  $\Omega^\ve_s$  is a connected domain in $\mathbb R^3$ with Lipschitz-continous boundary $\partial \Omega^\ve_s$. 
The geometrical assumptions on   $\Omega^\ve_s$  ensure also that it is sufficient to extend $c^\ve_s$ by reflection  in tangential directions  near the boundary $\partial\Omega_s^\ve \cap \partial\Omega$. Therefore the extension results  apply also for $c^\ve_s$, see  \cite{Acerbi, Ptashnyk_hairy}, and the extension operator   is defined globally in $\Omega$.

For $c_l^{\varepsilon}
 \in H^1(0,T; H^{1}(\Omega_l^{\varepsilon}))$ we define
 $\hat   c^{\varepsilon}_l(\cdot,t):={\tilde{c}^{\varepsilon}_l}(\cdot,t)$ a.e.~in time.
Since the extension operator is linear and bounded  and $\Omega_{l}^{\varepsilon}$ does not depend on $t$, 
we obtain   $\hat c^{\varepsilon}_l \in H^1(0,T; H^{1}(\Omega))$
and
\[
\|\partial_{t}{\hat c}_l^{\varepsilon}\|_{L^{2}(\Omega_T)}
\leq\Xi\|\partial_{t} {c}_l^{\varepsilon}\|_{L^{2}((0,T)\times\Omega_{l}^{\varepsilon})}, \quad  \|\partial_{t}\nabla {\hat c}_l^{\varepsilon}\|_{L^{2}(\Omega_T)}
\leq\Xi\|\partial_{t} \nabla {c}_l^{\varepsilon}\|_{L^{2}((0,T)\times\Omega_{l}^{\varepsilon})},
\]
where $l=a,s$. In the sequel, we shall identify ${c}^{\varepsilon}_l$ with its extension $\hat c_l^{\varepsilon}$.\\


\begin{theorem}\label{Existence_c}
 Under Assumption~\ref{assump}  there exists a nonnegative unique weak solution $(c_a^\ve, c_s^\ve, \vartheta_{f,a}^\ve, \vartheta_{b,a}^\ve,  \vartheta_{f,s}^\ve, \vartheta_{b,s}^\ve)$  of 
 \eqref{Eq_Conc}  and   \eqref{transporter_eq} with boundary conditions \eqref{Cell_membrBC}, \eqref{Cell_BC_other}, and \eqref{ExternalBC_N}  and  initial conditions \eqref{init_cond}  such  that  
$c_l^\ve \in H^1(0,T; H^1(\Omega_l^\ve))\cap L^{\infty}((0,T)\times \Omega_l^\ve)$, 
$\vartheta_{j,l}^\ve \in W^{1, \infty}(0,T; L^\infty(\Gamma^\ve_{zs}))$, with $l=a,s$, $j=f,b$,
and satisfies the   estimates 
\begin{align}\label{energy_estim}
  \begin{aligned}
 & \|c^\ve_l \|_{L^\infty(0,T; L^2(\Omega^\ve_l))}+    \|\nabla c^\ve_l \|_{L^2((0,T)\times\Omega^\ve_l)}
 +  \ve^{\frac 12}\|c^\ve_l \|_{L^2((0,T)\times\Gamma^\ve_{zs})}
 \leq C\; , \\
& \|c^\ve_l\|_{L^\infty((0, T)\times \Omega)} \leq C \; , \qquad 
 \|\vartheta^\ve_{j,l} \|_{L^\infty((0, T)\times \Gamma_{zs}^\ve)} \leq C\; ,
\end{aligned}
\end{align}
and 
\begin{align}\label{time_estim}
\begin{aligned}
  \|\partial_t c^\ve_l \|_{L^\infty(0,T; L^2(\Omega^\ve_l))}+ \|\partial_t\nabla c^\ve_l \|_{L^2((0,T)\times \Omega^\ve_l)} 
  +\ve^{\frac 12} \|\partial_t c^\ve_l \|_{L^2((0,T)\times \Gamma^\ve_{zs})}+\ve^{\frac 12} \|\partial_t \vartheta^\ve_{j,l}\|_{L^2((0,T)\times\Gamma^\ve_{zs})}
 \leq C
 \;,  \\
 \|\partial_t \vect{v}^\ve_a \|_{L^2((0,T)\times \Omega^\ve_a)} +  \|\partial_t \vect{v}^\ve_s \|_{L^2((0,T)\times \Omega^\ve_s)}+
 \ve \|\partial_t\nabla  \vect{v}^\ve_z \|_{L^2((0,T)\times \Omega^\ve_z)} 
 \leq C \;,
\end{aligned}
\end{align}
with $j=f,b$ and $l=a,s$,  and  the  constant $C$ depends on $M$ and is independent of $\ve$.

\end{theorem}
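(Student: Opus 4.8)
The plan is to build the solution by a fixed-point argument that decouples the three nonlinearities---the velocity field, the transporter ODEs, and the advection-diffusion equations---and then to upgrade regularity and derive the $\ve$-uniform bounds by energy estimates. Given a pair $(\bar c_a^\ve, \bar c_s^\ve)$ in, say, $C([0,T];L^2(\Omega_l^\ve))$ with traces controlled, I would first invoke Theorem~\ref{Existence_Stokes} to obtain the velocity $\vect v^\ve$ driven by the force term $\delta^\ve(\bar c_s^\ve-\bar c_a^\ve)$; the cut-off $H_M(\vect v^\ve)$ then supplies a \emph{bounded}, given advection field, which is precisely what makes the later estimates closeable. Next, since the transporter system \eqref{transporter_eq} is linear in $(\vartheta_{f,l}^\ve,\vartheta_{b,l}^\ve)$ once $\bar c_l^\ve$ is frozen, it is solved pointwise in $x\in\Gamma_{zs}^\ve$ by variation of constants, giving $\vartheta_{j,l}^\ve\in W^{1,\infty}(0,T;L^\infty(\Gamma_{zs}^\ve))$ depending continuously on $\bar c_l^\ve$. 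With $H_M(\vect v^\ve)$, $\vartheta_{j,l}^\ve$, $D_l^\ve$, $F_l^\ve$ all given, \eqref{Eq_Conc} together with \eqref{Cell_membrBC}, \eqref{Cell_BC_other}, \eqref{ExternalBC_N} reduces to a pair of linear parabolic problems for $(c_a^\ve,c_s^\ve)$, solvable by Galerkin approximation. This defines the map $\bar c\mapsto c$; a contraction estimate on a short time interval---using the Lipschitz velocity dependence \eqref{uniqueness_estim}, the Lipschitz continuity of $F_l$ and of the ODE flow, and the $\ve$-trace inequality $\ve\|u\|_{L^2(\Gamma_{zs}^\ve)}^2\le C(\|u\|_{L^2(\Omega_l^\ve)}^2+\ve^2\|\nabla u\|_{L^2(\Omega_l^\ve)}^2)$ to absorb the surface coupling---yields a unique local solution, continued to $[0,T]$ by the bounds below.

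For the energy estimate I would test \eqref{def_apop} and \eqref{def_symp} with $c_a^\ve$ and $c_s^\ve$, add the identities, and control $\|\nabla c_l^\ve\|_{L^2}$ by the uniform ellipticity of $D_l$. The advection term $\langle H_M(\vect v^\ve)c_l^\ve,\nabla c_l^\ve\rangle$ is estimated not by a divergence-free structure (the cut-off destroys it) but directly by $|H_M(\vect v^\ve)|\le M$ and Young's inequality, absorbing a $\tfrac{d_l}{2}\|\nabla c_l^\ve\|^2$ into the diffusion and leaving a $C(M)\|c_l^\ve\|^2$ term for Grönwall. Every boundary contribution on $\Gamma_{zs}^\ve$ carries an explicit factor $\ve$, which combined with the $\ve$-trace inequality and the $L^\infty$-bounds on the transporters is controlled uniformly, giving the first line of \eqref{energy_estim}. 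Nonnegativity follows by testing with $-(c_l^\ve)^-$: the sign condition $F_l(t,y,\xi_-)\xi_-\le C|\xi_-|^2$ from Assumption~\ref{assump}, together with the fact that the transporter ODEs preserve nonnegativity (so $\vartheta_{j,l}^\ve\ge0$, since $R_l\ge0$ on nonnegative arguments and the off-diagonal coupling has the correct sign), makes the flux terms favorably signed, and Grönwall forces $(c_l^\ve)^-\equiv0$.

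For the $L^\infty$ bounds, the transporters follow directly from the ODEs, since $\vartheta_{f,l}^\ve+\vartheta_{b,l}^\ve$ obeys a linear equation with bounded nonnegative production and decay and hence stays bounded uniformly in $\ve$. For the concentrations I would run a Moser/Alikakos iteration, testing with $((c_l^\ve-\kappa)^+)^{2^k-1}$ over increasing $k$, using the sublinearity of $F_l$ and controlling the $\ve$-scaled trace terms uniformly as above, to get $\|c_l^\ve\|_{L^\infty((0,T)\times\Omega)}\le C$ with $C$ independent of $\ve$ but depending on $M$. The time-derivative estimates \eqref{time_estim} come from differentiating \eqref{def_apop}, \eqref{def_symp} in $t$ (or using difference quotients), testing with $\partial_t c_l^\ve$, and invoking $\partial_t D_l,\partial_t F_l,\partial_t\alpha_l,\partial_t\beta_l\in L^\infty$ and the regularity $c_l^0\in H^2(\Omega)$ from Assumption~\ref{assump}; the velocity time-derivatives then follow by applying the linear estimate \eqref{ap_estim} of Theorem~\ref{Existence_Stokes} to the time-differentiated Stokes-Darcy system, whose force term is now $\delta^\ve(\partial_t c_s^\ve-\partial_t c_a^\ve)$, already bounded.

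Uniqueness is obtained by subtracting two solutions, testing the concentration difference with itself, estimating the velocity difference by \eqref{uniqueness_estim} and the transporter difference by Grönwall on the ODEs, with the $\ve$-trace inequality again handling the surface coupling. \textbf{The main obstacle} is keeping all constants independent of $\ve$ despite the surface measure $|\Gamma_{zs}^\ve|\sim\ve^{-1}$ blowing up: one must show that the explicit $\ve$ in front of every boundary term compensates exactly through the scaled trace inequality, and that the nonlinear transporter feedback is absorbed consistently. The Moser iteration on the perforated domain with $\ve$-dependent surface terms, used to produce the $\ve$-uniform $L^\infty$ bound, is the most delicate point of the argument.
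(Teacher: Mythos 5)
Your overall architecture matches the paper's: a decoupling fixed-point argument (freeze the concentrations, solve the Stokes--Darcy system via Theorem~\ref{Existence_Stokes}, solve the transporter ODEs, then the linear parabolic problems), nonnegativity by testing with the negative part and exploiting the sign structure of the boundary terms and of $F_l$, energy estimates with the $\ve$-scaled trace inequality absorbing the surface coupling, and time-differentiation for \eqref{time_estim}. Two of your choices genuinely differ from the paper. First, you close the fixed point by a short-time contraction (Banach), whereas the paper uses the compact embedding $H^1(0,T;H^1(\Omega_l^\ve))\subset C([0,T];H^s(\Omega_l^\ve))$ and Schauder; your route has the advantage of delivering uniqueness for free, at the cost of having to verify Lipschitz dependence of the full composition (velocity, ODE flow, parabolic solve) on the frozen data, which \eqref{uniqueness_estim} and the $L^\infty$ bounds do support. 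Second, for the uniform $L^\infty$ bound you propose a Moser/Alikakos iteration, while the paper runs a De~Giorgi-type truncation with test functions $(c_l^\ve-S)_+$ and invokes Theorem~II.6.1 of Ladyzhenskaja et al., choosing the level $S$ and a small time horizon explicitly and then iterating over time intervals; both are viable, and in both the delicate point is exactly the one you flag, namely that the $\ve$-weighted boundary terms must be absorbed with $\ve$-independent constants. Your simpler pointwise bound $|H_M(\vect{v}^\ve)|\le M$ for the convective term (in place of the paper's Gagliardo--Nirenberg detour) is also fine.

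There are, however, two flaws to repair. The significant one is a circularity in your derivation of \eqref{time_estim}: you first claim the bound on $\partial_t c_l^\ve$ and only afterwards bound $\partial_t\vect{v}^\ve$ from the time-differentiated Stokes--Darcy system ``whose force term is already bounded.'' But differentiating \eqref{def_apop}, \eqref{def_symp} in time produces the term $\partial_t\big(H_M(\vect{v}_l^\ve)\,c_l^\ve\big)$, which involves $\partial_t\vect{v}_l^\ve$; so the concentration estimate cannot be closed before the velocity one. The paper resolves this by deriving \emph{both} inequalities first --- the velocity one, \eqref{veloc_time_der}, controls $\partial_t\vect{v}^\ve$ by $\ve^{1/2}\|\partial_t c_l^\ve\|_{L^2(\Gamma_{zs}^\ve)}$ --- then substituting the trace inequality \eqref{estim_boundary}, taking $\ve$ small to absorb the resulting $\ve^2\|\partial_t\nabla c_l^\ve\|^2$ terms, and applying Gr\"onwall to the coupled system. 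Your ingredients suffice, but the estimates must be closed simultaneously, not sequentially. The minor flaw: the transporter system with $\bar c_l^\ve$ frozen is \emph{not} linear, since $R_l(t,x,\vartheta_{f,l}^\ve)$ is only Lipschitz in $\vartheta_{f,l}^\ve$; variation of constants does not apply, though Cauchy--Lipschitz gives the same conclusion ($C^1$ in time, unique, nonnegative, bounded via Gr\"onwall on the sum equation), which is what the paper uses.
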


\begin{proof} 
The existence of a solution will be proven by showing  the existence of a fix point of  the operator  $\mathcal B$   defined on  
$\big(C([0,T]; H^s({\Omega}^{\varepsilon}_a))\cap L^\infty((0,T)\times \Omega_a^\ve)\big)\times \big(C([0,T]; H^s({\Omega}^{\varepsilon}_s))\cap L^\infty((0,T)\times \Omega_s^\ve)\big)$, with $1/2 < s<1$,
by  $(c^{\varepsilon,n}_a, c^{\ve,n}_s)=\mathcal B (c^{\varepsilon, n-1}_a, c^{\varepsilon, n-1}_s)$
given as solutions of \eqref{def_apop} and \eqref{def_symp}
with $(\vartheta^{\ve, n}_{f,l}, \vartheta^{\ve, n}_{b,l})$  solving  
\begin{eqnarray}\label{transporter_eq_Iter}
 \begin{cases}
  \partial_t \vartheta_{f,l}^{\ve, n}&= R_{l}^\ve(t,x,\vartheta_{f,l}^{\ve, n})- \alpha_{l}^\ve(t,x) c^{\ve, n-1}_{l} \vartheta_{f,l}^{\ve, n} + \beta_{l}^\ve(t,x) \vartheta_{b,l}^{\ve, n}-\gamma_{f,l}^\ve(t,x) \vartheta_{f,l}^{\ve, n} \quad
\text{ on } \Gamma^\ve_{zs}\times(0,T)\; ,  \\
 \partial_t \vartheta_{b,l}^{\ve, n} & = \phantom{ R_{l}^\ve(t,x,\vartheta_{f,l}^{\ve, n})- } \, \, \alpha_{l}^\ve(t,x) c^{\ve, n-1}_{l} \vartheta_{f,l}^{\ve, n} - \beta_{l}^\ve(t,x) \vartheta_{b,l}^{\ve, n} -\gamma_{b,l}^\ve(t,x) \vartheta_{b,l}^{\ve, n} \, \quad \text{ on } 
\Gamma^\ve_{zs}\times(0, T)\; , 
   \end{cases} \hspace{-1.5 cm }
 \end{eqnarray}
where  $l=a,s$. 
For a given nonnegative $c^{n-1, \varepsilon}_l\in  C([0,T]; H^{s}(\Omega^{\varepsilon}_l))\cap L^{\infty}((0,T)\times \Omega_l^\ve)$, with   $l=a,s$,  due to  the Lipschitz-continuity  of the right-hand side of   system   \eqref{transporter_eq_Iter}, 
there exists a unique solution 
$\vartheta_{j,l}^{\varepsilon, n} \in  C^1([0,T]; L^2(\Gamma^{\varepsilon}_{zs}))$  of  \eqref{transporter_eq_Iter}, with $j=f,b$ and $l=a,s$.
Using that initial values are nonnegative,  function $R_l^\ve(t,x,\xi)$ is nonnegative for nonnegative  $\xi$, and $c^{\varepsilon, n-1}_l(t,x)\geq 0$ a.e.~on $[0,T]\times \Gamma^{\varepsilon}_{zs}$,   we obtain $\vartheta_{j,l}^{n,\varepsilon}(t,x)\geq 0$ a.e~on $[0,T]\times \Gamma^{\varepsilon}_{zs}$ for $j=f,b$ and $l=a,s$. 
Adding  equations in   \eqref{transporter_eq_Iter} yields
\begin{eqnarray*}
\partial_t ( \vartheta_{f,l}^{\ve, n} + \vartheta_{b,l}^{\ve, n}) = R_l^\ve(t,x,\vartheta_{f,l}^{\ve, n}) - \gamma_{f,l}^\ve(t,x)  \vartheta_{f,l}^{\ve, n} - \gamma_{b,l}^\ve(t,x)  \vartheta_{b,l}^{\ve, n}\; , && \text{ for } t \in [0,T]  \text{ and  a.a }  x\in\Gamma_{zs}^\ve \; ,
\end{eqnarray*}
where $l=a,s$.  Considering the Lipschitz continuity of $R_l^\ve$,   the nonnegativity of  $\vartheta_{j,l}^{\varepsilon, n}$, the boundedness of initial conditions 
and applying   Gronwall's inequality,  we obtain the   boundedness  of   $ \vartheta_{j,l}^{\ve, n}$ a.e. on $[0,T]\times \Gamma^\ve_{zs}$,  with  $j=f,b$ and $l=a,s$.
The boundedness of $c^{\ve, n-1}_l$ and $\vartheta_{j,l}^{\ve, n}$ implies also the boundedness of $ \partial_t \vartheta_{j,l}^{\ve,n}$ a.e.
on $[0,T]\times \Gamma^\ve_{zs}$, whereas  $j=f,b$ and $l=a,s$. 

Using Galerkin's method and a priori estimates similar to those shown below,  we obtain the existence of a weak nonnegative solution  $c^{\varepsilon, n}_l\in H^1(0,T;H^1(\Omega^{\varepsilon}_l))\cap L^{\infty}((0,T)\times \Omega_l^\ve)$, for $l=s,a$, see \cite{Ladyzhenskaja}. 
The embedding   $H^1(0,T;H^1(\Omega^{\varepsilon}_l)) \subset C([0,T]; H^s(\Omega_l^\ve))$, for $1/2 <s <1$, 
 is compact and, by virtue of the Schauder theorem, there exists a fix point of $\mathcal B$, a solution  $(c^{\varepsilon}_l, \vartheta_{j,l}^{\varepsilon})$ of the microscopic problem,  where $j=f,b$ and $l=a,s$. 
In addition, we obtain boundedness of $c^{\varepsilon}_l(t,x)$ a.e.  in $[0,T]\times \Omega_l^\ve$ and on  $[0,T]\times \Gamma^\ve_{zs}$, and   $c^{\varepsilon}_l(t,x)\geq 0$ a.e. in  $[0,T]\times {\Omega_l^\ve}$  and  on $[0,T]\times {\Gamma_l^\ve}$, together with 
$\vartheta_{j,l}^{\varepsilon}(t,x)\geq 0$ a.e. on $[0,T]\times \Gamma_{zs}^\ve$, where $j=f,b$ and $l=a,s$.
This ensures also the boundedness of  $ \partial_t \vartheta_{j,l}^{\ve}$   a.e.
on $[0,T]\times \Gamma^\ve_{zs}$ and the uniform in $\varepsilon$ boundedness of  $\vartheta_{j,l}^{\ve}$   a.e.
on $[0,T]\times \Gamma^\ve_{zs}$,  and thus the last estimate in \eqref{energy_estim}.\\

Now we shall prove  the non-negativity of $c_l^{\ve, n}$, and thus  of $c_l^\ve$,  and the a priori estimates for $c_l^{\ve, n}$, and therefore for $c_l^\ve$,~uniformly~in~$\ve$, where $l=a,s$.
To show the non-negativity of $c_{l}^{\ve,n}$, with $l=a,s$ we take   $c_{l,-}^{\ve}=\min\{0, c_{l}^{\ve,n}\}$  as test functions in \eqref{def_apop} and \eqref{def_symp}, respectively.  Considering 
the assumptions on  the coefficients and functions $F_l^\ve$,  and the  nonnegativity of   $\vartheta^{\ve,n}_{j,l}$, with $l=a,s$ and $j=f,b$,
terms on the right-hand side in  \eqref{def_apop} and \eqref{def_symp} can be estimated by 
$
C_1 \|c_{l,-}^{\ve} \|_{L^2((0,T)\times\Omega_l^\ve)}.
$
Using   Gagliardo-Nirenberg-inequality, i.e. 
\begin{eqnarray}\label{GN}
\|v\|_{L^p(\Omega)} \leq C_G\big( \|v\|^{\lambda}_{L^r(\Omega)} \|\nabla v\|_{L^q(\Omega)}^{(1-\lambda)} + \|v\|_{L^1(\Omega)}\big), 
\end{eqnarray}
where $1\leq q <p$, $r\geq 1$ and  $\lambda\in [0,1]$ satisfies the relation $1/p = \lambda(1/r - 1/n) + (1-\lambda) 1/q$,  and considering extension of  $c^\ve_{l,-}$, defined in Lemma \ref{extension},  we can estimate the convective  term by 
\begin{eqnarray*}
&&|\langle H_M(\vect v_{l}^\ve) c^\ve_{l,-}, \nabla c^\ve_{l,-} \rangle_{\Omega^\ve_l, T}| \leq 
\frac 1{d_l}\|H_M(\vect v_{l}^\ve)\|^2_{L^\infty(0,T; L^4(\Omega^\ve_{l}))}\| c^\ve_{l,-}\|^2_{L^2(0,T; L^4(\Omega^\ve_l))} + \frac {d_l} 4\| \nabla c^\ve_{l,-}\|^2_{L^2((0,T)\times\Omega^\ve_l)}  \\
&&
\leq C  \Big( \|H_M(\vect v_{l}^\ve)\|^8_{L^\infty(0,T; L^4(\Omega^\ve_l))} + \|H_M(\vect v_{l}^\ve)\|^2_{L^\infty(0,T; L^4(\Omega^\ve_l))}\Big) \| c^\ve_{l,-}\|^2_{L^2((0,T)\times \Omega_l^\ve)} + 
\frac {d_l} 2 \| \nabla c^\ve_{l,-}\|^2_{L^2((0,T)\times\Omega^\ve_l)}.
\end{eqnarray*}
Considering  uniform boundedness of $\|H_M(\vect v_{l}^\ve)\|_{L^\infty(0,T; L^4(\Omega^\ve_l))}\leq C M$, the nonegativity of initial condition $c_l^0$ and applying  Gronwall's inequality  we obtain  
\[
\|c_{a,-}^{\ve}\|_{L^\infty(0,T;L^2(\Omega_a^\ve))} + \|c_{s,-}^{\ve}\|_{L^\infty(0,T;L^2(\Omega_s^\ve))}\leq 0\; ,
\]
which implies   the nonnegativity  of $c_l^{\ve,n}$ a.e. in $(0,T)\times{\Omega_l^\ve}$, where $l=a,s$. 
 For $c_l^{\ve,n} \in H^1(0,T; H^1(\Omega_l^\ve))$ we obtain also that  $c_l^{\ve, n} \geq 0$ a.e. on $[0,T]\times \Gamma_{zs}^\ve$. 
Considering  weak convergence of $\{c_l^{\ve,n}\}$ in $H^1(0,T; H^1(\Omega_l^\ve))$, as $n\to \infty$, we conclude that  $c_l^\ve(t,x)\geq 0$ a.e. in $[0,T]\times {\Omega_l^\ve}$ and a.e. on $[0,T]\times \Gamma_{zs}^\ve$. 

Taking $\varphi_1=c^\ve_a$ and $\varphi_2=c^\ve_s$ as test functions in 
 \eqref{def_apop} and \eqref{def_symp}, 
 using the nonnegativity of $\vartheta_{f,l}^\ve$ and of coefficients $\alpha_{l}^\ve$, with $l=a,s$,  the  first estimate in
\begin{eqnarray}\label{estim_boundary}
\begin{aligned}
& \ve \|c^\ve_l\|^2_{L^2(\Gamma^\ve_{zs})}  \leq C\big(\|c^\ve_l\|^2_{L^2(\Omega^\ve_l)} +
 \ve^2  \|\nabla c^\ve_l\|^2_{L^2(\Omega^\ve_l)} \big)\; ,\\
&  \ve \|c^\ve_l\|_{L^1(\Gamma^\ve_{zs})}  \leq C\big(\|c^\ve_l\|_{L^1(\Omega^\ve_l)} +
 \ve \|\nabla c^\ve_l\|_{L^1(\Omega^\ve_l)} \big)\; , 
 \end{aligned}
\end{eqnarray}
see \cite{AnnaMariya2008}, considering that   $\ve|\Gamma_{zs}^\ve| \leq C$ independently of $\ve$,  and applying H\"older's inequality, 
 we obtain 
\begin{eqnarray*}
 \|c^\ve_l(\tau)\|^2_{L^2(\Omega^\ve_l)} 
  + (d_l-\zeta\ve^2)   \|\nabla  c^\ve_l\|^2_{L^2((0,\tau)\Omega^\ve_l)}  & \leq &
 \frac 1{d_l} \|H_M({\vect v}^\ve_{l}) \|^2_{L^\infty(0,\tau; L^4(\Omega_l^\ve))} \|c^\ve_l\|^2_{L^2(0,\tau; L^4(\Omega^\ve_l))} \\
& + &C_1\|c^\ve_l\|^2_{L^2((0,\tau)\times\Omega^\ve_l)} 
  +   \|c^0_{l}\|^2_{L^2(\Omega^\ve_l)}+ C_2 \|\vartheta_{b,l-1}^\ve\|_{L^{\infty}((0,\tau)\times \Gamma_{zs}^\ve)} 
\end{eqnarray*}
 for $\tau \in (0, T]$, with $l=a,s$, whereas  $a-1:=s$ and $s-1:=a$. 
Using the extension of $c^\ve_l$ from $\Omega_l^\ve$ into $\Omega$, given by Lemma~\ref{extension},  and applying  inequality \eqref{GN} in the first term on the right-hand side  imply
\begin{eqnarray*}
 \|c^\ve_l(\tau)\|^2_{L^2(\Omega)} 
  + (d_l/4-\zeta\ve^2)   \|\nabla  c^\ve_l\|^2_{L^2((0,\tau)\times \Omega)}   \leq 
   \|c^0_{l}\|^2_{L^2(\Omega^\ve_l)}+ C_1 \|\vartheta_{b,l-1}^\ve\|_{L^{\infty}((0,T)\times \Gamma_{zs}^\ve)} \\
 + C_2 \big( \|H_M({\vect v}^\ve_{l}) \|^8_{L^\infty(0,\tau; L^4(\Omega_l^\ve))} + 
 \|H_M({\vect v}^\ve_{l}) \|^2_{L^\infty(0,\tau; L^4(\Omega_l^\ve))} 
 + 1\big)\|c^\ve_l\|^2_{L^2((0,\tau)\times\Omega_l^\ve)} \; . 
\end{eqnarray*}
 Then, for  all $\ve\leq \ve_0$, with some $\ve_0>0$,  and $\zeta$ such that 
$d_l/4-\zeta \ve_0^2\geq d_0 >0$, considering the regularity assumption on  initial data, the boundedness of $\vartheta_{b,l}^\ve$, where $l=a,s$, and applying 
 Gronwall's Lemma,  the last inequality,  combined with \eqref{estim_boundary}, implies first  estimate in \eqref{energy_estim}.\\

\noindent To show $L^\infty$-estimates  for the extension of $c_l^\ve$  into $\Omega$, given by Lemma~\ref{extension}, we shall apply  Theorem II.6.1 from \cite{Ladyzhenskaja} stating that inequality
$\|(c_l^\ve- S)_+\|_{L^\infty(0,T; L^2(\Omega))} + \|\nabla (c_l^\ve- S)_+\|_{L^2(\Omega_T)} \leq \zeta S \|\Omega_{l,S}(t) \|^{\tilde r}_{L^{\tilde q}(0,T)}$, for appropriate $\tilde r, \tilde q$, a positive constant $\zeta$ and $\Omega_{l,S}(t)=\{ x\in \Omega:  c_l^\ve(t,x) > S\}$ for a.a. $t \in (0,T)$, ensures the corresponding estimate for the $L^\infty$-norm of $c^\ve_l$.
 
We take $(c_a^\ve- S)_+$ and $(c_s^\ve- S)_+$ as test functions in  \eqref{def_apop} and \eqref{def_symp}. 
Considering the nonnegativity of $\alpha_a^\ve(t,x)$,  $c^\ve_{a}(t,x)$ and  $\vartheta_{f,a}^\ve(t,x)$ for $(t,x) \in (0,T)\times\Gamma_{zs}^\ve$, and  inequalities  \eqref{estim_boundary},  the boundary integrals  can be estimated by
\begin{eqnarray}\label{estim_bound_1}
&& \ve \langle
\beta_{l-1}^\ve(t,x) \vartheta_{b,l-1}^\ve- \alpha_l^\ve(t,x) c^\ve_{l} \vartheta_{f,l}^\ve, (c^\ve_l - S)_{+}  \rangle_{\Gamma^\ve_{zs}}
\leq \sup_{\Gamma_{zs}^\ve} ( \beta_{l-1}^\ve(t,x) \vartheta_{b,l-1}^\ve) \ve \|(c^\ve_l - S)_{+}\|_{L^1( \Gamma^\ve_{zs})}
\nonumber
\\
&&\qquad  \leq 1/2\left(  {\Upsilon_l}  \|(c^\ve_l - S)_{+}\|^2_{L^2(\Omega_l^\ve)}+  {d_l}  \|\nabla (c^\ve_l - S)_{+}\|^2_{L^2(\Omega_l^\ve)}+C^2_{Y_l} {\Upsilon_l}( 1+\Upsilon_l {\ve^2}/ {d_l} )  |\Omega_{l,S}^\ve(t)|  \right)\; , 
\end{eqnarray}
where $l=a,s$, with  $a-1=s$, $s-1=a$,  $\Omega^\ve_{l,S}(t)=\{ x\in \Omega^\ve_l: \,   c_l^\ve(t,x) > S\}$ for a.a. $t\in (0,T)$ and  $\Upsilon_l= \sup_{(0,T)\times\Gamma_{zs}^\ve} ( \beta_{l-1}^\ve(t,x) \vartheta_{{l-1},s}^\ve)$.
The Lipschitz continuity of $F_l$, with $l=a,s$,  ensures  
\begin{eqnarray}\label{estim_bound_2}
 \langle F^\ve_l(t,x,c^\ve_l), (c_l^\ve-S)_{+} \rangle_{\Omega^\ve_{l}} 
& \leq & 3 C_{F_l}/2 \|(c_l^\ve-S)_+\|^2_{L^2(\Omega_l^\ve)} +   ( S^2/C_{F_l} + C_{F_l})|\Omega_{l,S}^\ve(t)| \; . 
\end{eqnarray}
As next we shall  estimate $|\langle H_M({\vect v}^\ve_{l})    c^\ve_l,   \nabla  (c^\ve_l  - S)_{+}   \rangle_{\Omega^\ve_l,T}|  $ with $l=a,s$. Using H\"older's  inequality  yields 
\begin{eqnarray*}
&&|\langle H_M( {\vect v}^\ve_{l})    c^\ve_l,   \nabla  (c^\ve_l  - S)_{+}   \rangle_{\Omega^\ve_l, T}  | \leq \frac {d_l}4 \| \nabla (c^\ve_l-S)_{+} \|^2_{L^2((0,T)\times\Omega^\ve_{l})}\\ &&+
\frac 1{ d_l} \|H_M(\vect{v}^\ve_{l})\|^2_{L^{10}(0,T;L^4(\Omega^\ve_{l}))} \Big(\| (c^\ve_l-S)_{+} \|^2_{L^{5/2}(0,T;L^4(\Omega^\ve_{l}))} + S^2\Big(\int_0^T |\Omega^\ve_{l,S}|^{\frac 5 8}dt\Big)^{\frac{4}{5}} \Big)\; .
\end{eqnarray*}
Choosing $\varsigma=1/30$ we can estimate 
\begin{eqnarray*}
\| (c^\ve_l-S)_{+} \|^2_{L^{5/2}(0,T;L^4(\Omega^\ve_{l}))} \leq \| (c^\ve_l-S)_{+} \|^2_{L^{{5}/2(1+\varsigma)}(0,T;L^{4(1+\varsigma)}(\Omega^\ve_{l}))} \Big(\int_0^T |\Omega^\ve_{l,S}|^{\frac 58}dt\Big)^{\frac{4}{5} \frac \varsigma{\varsigma+1}} \; .
\end{eqnarray*}
Using imbedding result, see \cite[Chapter II, Eq.~(3.4)]{Ladyzhenskaja},  and extension of $c_l^\ve$ into $\Omega$,   we obtain 
\begin{eqnarray*}
\| (c^\ve_l-S)_{+} \|^2_{L^{ {5}/2(1+\varsigma)} (0,T;L^{4(1+\varsigma)}(\Omega^\ve_{l}))}  
&&\leq \| (c^\ve_l-S)_{+} \|^2_{L^{{5}/2 (1+\varsigma)}(0,T;L^{4(1+\varsigma)}(\Omega))} \\ 
&&\leq C_\Omega \left( \| (c^\ve_l-S)_{+} \|^2_{L^\infty(0,T;L^2(\Omega))} +   \|  \nabla (c^\ve_l-S)_{+} \|^2_{L^2(0,T;L^2(\Omega))}\right) \;.
\end{eqnarray*}
Considering  
$ \|H_M(\vect{v}^\ve_l)\|^2_{L^{10}(0,T;L^4(\Omega^\ve_{l}))} \leq T^{\frac 1 {5}} \|H_M(\vect{v}^\ve_l)\|^2_{L^\infty(0,T;L^4(\Omega^\ve_{l}))} \leq T^{\frac 1 5} C_v M^2 $
we conclude 
\begin{eqnarray}\label{estim_bound_3}
&& |\langle H_M({\vect v}^\ve_{l})    c^\ve_l,   \nabla  (c^\ve_l  - S)_{+}   \rangle_{\Omega^\ve_l, T}  | \leq \frac {d_l} 4 \| \nabla (c^\ve_l-S)_{+} \|^2_{L^2((0,T)\times\Omega^\ve_{l})} + 
T^{\frac 1{5}} C_\Omega C_v  M^2 / d_l\times  \\
&& \Big[
S^2\Big(\int_0^T |\Omega^\ve_{l,S}|^{\frac 5{8}}dt\Big)^{\frac{4}{5}}
 +  \Big(\int_0^T |\Omega^\ve_{l,S}|^{\frac 58}dt\Big)^{\frac{4}{5} \frac \varsigma{\varsigma+1}}
\Big(
\|  (c^\ve_l-S)_{+} \|^2_{L^\infty(0,T;L^2(\Omega))} + \| \nabla  (c^\ve_l-S)_{+} \|^2_{L^2(\Omega_T)}\Big)\Big] .  \nonumber
\end{eqnarray}
We shall define  
$$S=\max_{l=a,s}\max\{1, \|c_l^0\|_{L^\infty(\Omega)}, \Xi \big(\|c_l^0\|_{L^1(\Omega)} + \Upsilon_l |\Gamma||\Omega| + C_{F_l}|\Omega|\big) e^{C_{F_l} T}\},  $$ where $\Xi=\Xi(Y, Y_l)$ is the constant from  Lemma \ref{extension}. The choice  of $S$ ensures that $|\Omega_{l,S}^\ve(t)|\leq 1$ and $|\Omega_{l,S}(t)|\leq 1$.
Choosing 
{ $$\tilde T=\min_{l=a,s}\min\{(d_l^2/[8\Xi C_\Omega C_v M^2 |\Omega|^{\frac{\varsigma}{2\varsigma+2}}])^{\frac{5(\varsigma+1)}{(5\varsigma+1)}},  [d_l / (4\Xi C_\Omega C_v M^2|\Omega|^{\frac{\varsigma}{2\varsigma+2}})]^{\frac{5(\varsigma+1)}{5\varsigma+1}}, [4\Upsilon_l+ 12C_{F_l}]^{-1}, |\Omega|^{-\frac 58} \} \; ,$$
}
combining estimates \eqref{estim_bound_1}-\eqref{estim_bound_3} and   using the fact that $|\Omega_{l,S}^\ve(t)|\leq 1$
yield
\begin{align}
  \begin{aligned}
&  \|(c^\ve_l- S)_{+}\|^2_{L^\infty(0,\tilde T;L^2(\Omega))}  + \| \nabla (c^\ve_l  - S)_{+} \|^2_{L^2((0,\tilde T)\times\Omega)}\\
&\leq S^2 {8\Xi}/{\min\{1, d_l\}}\left(\tilde T^{\frac 1{5}} C_vC_\Omega M^2/d_l
+   1/C_{F_l} + 1 + C_{Y_l}(1+  \Upsilon_l\ve^2/d_l)/2 \right)\Big(\int_0^{\tilde T} |\Omega^\ve_{l,S}(t)|^{\frac 5{8}}dt\Big)^{\frac{4}{5}} \; . 
\nonumber
\end{aligned}
\end{align}
Choosing  $q=4(1+\varsigma)$ together with  $r=5(1+\varsigma)/2$ and  using $|\Omega_{l,S}^\ve(t)|\leq  |\Omega_{l,S}(t)|\leq 1$,
  we  obtain 
\begin{align}
  \begin{aligned}
&   \|(c^\ve_l- S)_{+}\|_{L^\infty(0,\tilde T;L^2(\Omega))}  +  \| \nabla (c^\ve_l  - S)_{+} \|_{L^2((0,\tilde T)\times\Omega)} \leq S
\Theta\Big(\int_0^{\tilde T} |\Omega_{l,S}(t)|^{\frac r{q}}dt\Big)^{\frac{2(1+\varsigma)}r}\; ,
\nonumber
\end{aligned}
\end{align}
where $\Theta=8\Xi/\min\{1, d_l\}\left(\tilde T^{\frac 1{5}} C_vC_\Omega M^2/d_l
+   1/C_{F_l} + 1 + C_{Y_l}(1+  \Upsilon_l\ve^2/d_l)/2 \right)$.
Applying   Theorem~II.6.1 in \cite{Ladyzhenskaja} yields 
\begin{eqnarray}
\|c_l^\ve \|_{L^\infty((0,\tilde T)\times \Omega)} \leq 2 S[1+ 2^{\frac {2\varsigma +1}{\varsigma^2}} (C_\Omega \Theta)^{1+\frac 1\varsigma} \tilde T^{1+ \frac 1\varsigma} |\Omega|^{\frac 14} ]  .
\end{eqnarray}
Considering   $T_c=\min\limits_{l=a,s}\min \{ \tilde T , 
 \frac{d_l^5}{(C_\Omega C_v M^2)^5}, |\Omega|^{-\frac {\varsigma} {4(1+\varsigma)}}
  [\frac{8C_\Omega  \Xi}{\min\{1, d_l\}}[2+\frac{1}{C_{F_l}}+ \frac{C_{Y_l}} 2(1+  \Upsilon_l\frac{\ve^2}{d_l})]
  2^{\frac {2+\varsigma} {\varsigma + \varsigma^2}}]^{-1}\} $ implies
$$
\|c^\ve_l \|_{L^\infty((0,\tau)\times \Omega)} \leq  4S \quad \text{ for all } \tau \leq T_c.
$$
 The iteration over time-intervals  will then  ensure the boundedness of $c^\ve_l$ in $(0,T)\times\Omega$, and thus 
 also the boundedness of $c^\ve_l$  in $(0,T)\times \Omega_l^\ve$, for $l=a,s$,
and   second  estimate  in  \eqref{energy_estim}.

Using the estimates for $c^\ve_l$   and $\vartheta_{j,l}^\ve$,   with $j=f,b$ and $l=a,s$, and considering $\partial_t \vartheta_{j,l}^\ve$  as test functions in \eqref{transporter_eq},  we conclude 
\begin{eqnarray}\label{estim_transp_time}
\ve^{1/2}\|\partial_t \vartheta_{f,l}^\ve\|_{L^2((0,T)\times \Gamma^\ve_{zs})} + \ve^{1/2}\|\partial_t \vartheta_{b,l}^\ve\|_{L^2((0,T)\times \Gamma^\ve_{zs})}\leq C_1(1+\ve^{1/2}\|c^\ve_l \|_{ L^2((0,T)\times\Gamma_{zs}^\ve))})\leq C_2\; . \hspace{-0.5 cm }
\end{eqnarray}

Differentiating   equations \eqref{velosity} and boundary conditions   \eqref{BC1velosity}, \eqref{BC3velosity} with respect to $t$,  considering 
$(\partial_t \vect{v}^\ve_z, \partial_t \vect{v}^\ve_a, \partial_t \vect{v}^\ve_{sp})$ as a test function, and applying Korn inequality \eqref{Korn2} we obtain for $\tau \in (0, T]$ 
 \begin{equation}
\label{veloc_time_der}
\begin{gathered}
\ve^2  \|\nabla\partial_t \vect{v}_z^\ve\|^2_{L^2((0,\tau)\times\Omega^\ve_z)}   +
  \|\partial_t \vect{v}_{s}^\ve\|^2_{L^2((0,\tau)\times\Omega^\ve_{s})} 
  +  \|\partial_t \vect{v}_{a}^\ve\|^2_{L^2((0,\tau)\times\Omega^\ve_{a})}
  +  \ve  \,    \|\partial_t \vect{v}_a^\ve\cdot \vect{n}\|^2_{L^2((0,\tau)\times\Gamma^\ve_{zs})} \\
\leq C \ve \big( \|\partial_t c_s^\ve\|^2_{L^2((0,\tau)\times\Gamma^\ve_{zs})}  +  \|\partial_t c_{a}^\ve\|^2_{L^2((0,\tau)\times\Gamma^\ve_{zs})}\big)\; . 
\end{gathered}
\end{equation}
Now we differentiate  with respect to $t$  equations \eqref{Eq_Conc}  and use  $\partial_t c^\ve_a$ and $\partial_t c^\ve_s$
 as test functions. 
  Estimates \eqref{energy_estim} together with  inequalities  \eqref{GN} and 
\eqref{estim_boundary},  estimates in Lemma~\ref{extension}, 
 and Assumption~\ref{assump}  give
\begin{eqnarray*}
&&  \|\partial_t c^\ve_l(\tau)\|^2_{L^2(\Omega^\ve_l)}   + 
\int_0^\tau \|\partial_t \nabla  c^\ve_l\|^2_{L^2(\Omega^\ve_l)} dt
 \leq C \|c^\ve_l\|^2_{L^\infty((0,T)\times\Omega_l^\ve)}
  \int_0^\tau \| \partial_t \vect{v}^\ve_l\|^2_{L^2(\Omega^\ve_l)} dt  +  \|\partial_t c^\ve_l(0)\|^2_{L^2(\Omega^\ve_l)} \\
&& + C(1+ \|H_M(\vect{v}_{l}^\ve)\|^2_{L^\infty(0,\tau; L^4(\Omega^\ve_{l}))})
 \int_0^\tau  \left[ \| \partial_t  c^\ve_l\|^2_{L^2(\Omega^\ve_l)}  + \ve \| \partial_t \vartheta_{b,l-1}^\ve\|^2_{L^2(\Gamma^\ve_{zs})}  + \ve \|\partial_t \vartheta_{f,l}^\ve\|^2_{L^2(\Gamma^\ve_{zs})} \right] dt  \; , 
  \end{eqnarray*}
where $l=a,s$ with  $a-1:=s$ and $s-1:=a$. The  regularity assumption on  $c^0_l$ ensures that $ \|\partial_t c^\ve_l(0)\|_{L^2(\Omega^\ve_l)}\leq C \|c^0_l\|_{H^2(\Omega_l^\ve)}$, with  $l=a,s$.  Combining the last inequality  together with    \eqref{estim_transp_time} and  \eqref{veloc_time_der}, using the  boundedness of  $H_M({\vect v}_{l}^\ve)$ and inequality \eqref{estim_boundary}, choosing $\ve$ sufficient small,  and applying  Gronwall's inequality imply  \eqref{time_estim}.\\

\noindent {\it Uniqueness.}
Suppose there are two solutions of the problem. We denote $c_l^{\varepsilon}= c_{l,1}^{\varepsilon}-c_{l,2}^{\varepsilon}$ and  $\vartheta_{j,l}^\ve =\vartheta_{j,l,1}^{\varepsilon} - \vartheta_{j,l,2}^{\varepsilon}$, with $j=f,b$ and $l=a,s$,
and choose $\varphi_1=c_a^{\varepsilon}$   in \eqref{def_apop} and $\varphi_2=c_s^{\varepsilon}$   in \eqref{def_symp} 
\begin{eqnarray*}
&&
\|c_l^{\varepsilon}(\tau)\|^2_{L^2(\Omega^\ve_l)}  + 2(d_l-\zeta_1)\int_0^\tau\|\nabla c_l^{\varepsilon}\|_{L^2(\Omega^\ve_l)}
dt 
\leq C_{\zeta_1}\|c^\ve_{l,1}\|^2_{L^\infty((0,T)\times\Omega_l^\ve)}
\int_0^{\tau} \|\vect{v}_{l,1}^{\varepsilon}- \vect{v}_{l,2}^{\varepsilon}\|^2_{L^2(\Omega^\ve_l)} 
dt \\
&& +C_{\zeta_1}(1+ \|H_M(\vect{v}_{l,2}^\ve)\|^2_{L^\infty(0,\tau; L^4(\Omega^\ve_{l}))})
\int_0^{\tau} \|c_{l}^{\varepsilon}\|^2_{L^2(\Omega^\ve_l)} dt
+C_{\zeta_2}\|c^{\ve}_{l,1}\|^2_{L^\infty((0,T)\times\Gamma_{zs}^\ve)} 
 \int_0^\tau \ve 
\|\vartheta_{f,l}^{\varepsilon} \|^2_{L^2(\Gamma^\varepsilon_{zs})} dt \\
&&+  C_{\zeta_2}\int_0^\tau \varepsilon 
\|\vartheta_{b,l-1}^{\varepsilon}\|^2_{L^2(\Gamma^{\varepsilon}_{zs})} dt+ \zeta_2
 \int_0^{\tau}\varepsilon \|c_l^{\varepsilon}\|^2_{L^2(\Gamma^{\varepsilon}_{zs})} dt \; ,
\end{eqnarray*}
for any $\tau \in (0,T]$, where $a-1:=s$ and $s-1:=a$. For $ \vartheta_{j,l}^{\varepsilon}$, with $j=f,b$ and $l=a,s$, we obtain
\begin{eqnarray*}
&{\partial_t}\|\vartheta_{f,l}^\ve\|^2_{L^2(\Gamma_{zs}^\ve)} &\leq 
C\left[(1+  \|c^{\ve}_{l,1}\|_{L^\infty} )
\|\vartheta_{f,l}^\ve\|^2_{L^2(\Gamma_{zs}^\ve)}  + 
\|\vartheta^{\ve}_{f,l,2}\|^2_{L^\infty} \|c_l^{\varepsilon}\|^2_{L^2(\Gamma^{\varepsilon}_{zs})} +
 \|\vartheta_{b,l}^\ve\|^2_{L^2(\Gamma_{zs}^\ve)} \right] \; , \\
&{\partial_t}\|\vartheta_{b,l}^\ve\|^2_{L^2(\Gamma_{zs}^\ve)}&\leq
C\left[ \|c^{\ve}_{l,1}\|^2_{L^\infty} 
\|\vartheta_{f,l}^\ve\|^2_{L^2(\Gamma_{zs}^\ve)}  + 
\|\vartheta^{\ve}_{f,l,2}\|^2_{L^\infty} \|c_l^{\varepsilon}\|^2_{L^2(\Gamma^{\varepsilon}_{zs})} + \|\vartheta_{b,l}^\ve\|^2_{L^2(\Gamma_{zs}^\ve)}\right] \; .
\end{eqnarray*}
Combining those estimates,  using  \eqref{uniqueness_estim} and \eqref{estim_boundary}, boundedness of
$c_l^\ve$ and  $H_M(\vect{v}_{l}^\ve)$,  and applying Gronwall's inequality yield
\begin{eqnarray*}
 \ve\|\vartheta_{j,l}^\ve\|^2_{L^\infty(0,T;L^2(\Gamma_{zs}^\ve))}
 + \|c_l^{\varepsilon}\|^2_{L^\infty(0,T; L^2(\Omega^\ve_l))}+ \|\vect{v}_{l,1}^{\varepsilon} - \vect{v}_{l,2}^{\varepsilon}\|^2_{L^2((0,T)\times\Omega^\ve_l)} \leq 0\; .
\end{eqnarray*}
Thus,  $c_{l,1}^{\varepsilon}=c_{l,2}^{\varepsilon}$,  
$\vect{v}_{l,1}^{\varepsilon}=\vect{v}_{l,2}^{\varepsilon}$  in  $(0,T)\times\Omega^{\varepsilon}_l$,
and
$\vartheta_{j,l,1}^{\varepsilon}=\vartheta_{j,l,2}^{\varepsilon}$ on $[0,T]\times\Gamma^{\varepsilon}_{zs}$, 
  with $l=a,s$,    $j=f,b$.
\hfill \end{proof}

\section{Derivation of macroscopic equations}\label{MacroModel}

We denote by $\tilde{\vect{v}}^\ve_i$, for $i=z, a, sp$ the extension by zero from $(0,T)\times\Omega_i^\ve$ into $\Omega_T=(0,T)\times \Omega$, where $\Omega_{sp}^\ve:=\Omega_{as}^\ve$,
and  define $\vect{v}^\ve=\tilde{\vect{v}}_z^\ve + \tilde{\vect{v}}_{a}^\ve + \tilde{\vect{v}}_{sp}^\ve$ in  $\Omega_T$.

\begin{lemma}\label{convergence_water}
Under  Assumption~\ref{assump} 
there exist $ \vect{v}_z \in L^2(\Omega_T; H^1_{\text{per}}(Y_z)/\mathbb R)$, $\vect{v}_{a} \in L^2(\Omega_T\times Y_{a})$,  $\vect{v}_{sp} \in L^2(\Omega_T\times Y_{as})$  and 
$p_a, p_s \in L^2(\Omega_T\times Y)/\mathbb R$,  
 such that, up to a subsequence, 
\begin{eqnarray}
\begin{cases}\label{conver_v_p}
 \vect{v}^\ve_z \to  \vect{v}_z, \quad  \vect{v}^\ve_{a} \to  \vect{v}_{a}, \quad 
 \vect{v}^\ve_{sp} \to  \vect{v}_{sp}   &   \text{ two-scale}\; , \\
 \vect{v}^\ve_s \to  \tilde{\vect{v}}_z+ \tilde{\vect{v}}_{sp} & \text{ two-scale in } \Omega_T\times Y_s\; ,   \\
\vect{v}^\ve \to\overline{\vect{v}}=\tilde{\vect{v}}_z + \tilde{\vect{v}}_{a} +\tilde{ \vect{v}}_{sp}  &  \text{ two-scale in } \Omega_T\times Y\; , \\
 \ve \nabla \vect{v}^\ve_z \to \nabla_y  \vect{v}_z &  \text{ two-scale in } \Omega_T\times Y_z \; ,\\
  P^\ve_a \to p_a\; , \, \,  P^\ve_s \to p_s  &  \text{ two-scale in } \Omega_T\times Y \; ,\\
   p^\ve_a \to p_a|_{\Omega_T\times Y_a} \, \, \text{ two-scale in }  \Omega_T\times Y_a\; ,\, \,  &
    p^\ve_s \to p_s|_{\Omega_T\times Y_s}   \, \, \text{ two-scale in }  \Omega_T\times Y_s \; ,
  \end{cases}
\end{eqnarray}
and  
\begin{eqnarray}
  \begin{cases}\label{conver_v_p_weak}
    P^\ve_a \to \frac 1{|Y|} \int_Y p_a \, dy\; , \qquad P^\ve_s \to \frac 1{|Y|} \int_Y p_s \, dy & \text{ weakly in } L^2(\Omega_T)\; ,\\
 \vect{v}^\ve \to  {\vect v}= \frac 1{|Y|} \int_{Y_z}{\vect v}_z\,  dy + \frac 1{|Y|}  \int_{Y_{a}}{\vect v}_{a}\,  dy + \frac 1{|Y|} \int_{Y_{as}}\vect{v}_{sp}\, dy &\text{ weakly  in } L^2(\Omega_T)\; , \\
\vect{v}^\ve_{a} \cdot \vect{n}\to  \vect{v}_{a}\cdot \vect{n} \, & \text{ two-scale on } \Omega_T\times \Gamma_{zs} \; . 
\end{cases}
\end{eqnarray}
\end{lemma}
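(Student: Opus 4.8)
The plan is to combine the a priori estimates already at our disposal with the two-scale compactness results of the Appendix. First I would note that the right-hand side of \eqref{ap_estim} is bounded uniformly in $\ve$: estimate \eqref{energy_estim} gives $\ve^{1/2}\|c_l^\ve\|_{L^2((0,T)\times\Gamma_{zs}^\ve)}\leq C$ for $l=a,s$, and by Assumption~\ref{assump} the quantity $\|V_D\|_{H^1(\Omega)}$ is finite. Hence every term on the left of \eqref{ap_estim} is bounded independently of $\ve$; in particular $\ve\|\nabla\vect{v}_z^\ve\|_{L^2}$, $\|\vect{v}_s^\ve\|_{L^2}$, $\|\vect{v}_a^\ve\|_{L^2}$ and $\ve^{1/2}\|\vect{v}_a^\ve\cdot\vect{n}\|_{L^2(\Gamma_{zs}^\ve)}$ are uniformly bounded, and by \eqref{ap_estim_extension} the pressure extensions $P_s^\ve$, $P_a^\ve$ are bounded in $L^2(\Omega_T)/\mathbb{R}$.

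Next I would extract the limits by compactness. Applying the two-scale compactness theorem to the extensions by zero $\tilde{\vect{v}}_z^\ve$, $\tilde{\vect{v}}_a^\ve$, $\tilde{\vect{v}}_{sp}^\ve$, bounded in $L^2(\Omega_T)$, yields along a subsequence two-scale limits $\vect{v}_z\in L^2(\Omega_T\times Y_z)$, $\vect{v}_a\in L^2(\Omega_T\times Y_a)$, $\vect{v}_{sp}\in L^2(\Omega_T\times Y_{as})$. Since $\ve\nabla\vect{v}_z^\ve$ is bounded in $L^2((0,T)\times\Omega_z^\ve)$, the standard result on two-scale limits of scaled gradients upgrades the regularity of $\vect{v}_z$ to $L^2(\Omega_T;H^1_{\mathrm{per}}(Y_z)/\mathbb{R})$ and gives $\ve\nabla\vect{v}_z^\ve\to\nabla_y\vect{v}_z$ two-scale. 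Recalling $\Omega_s^\ve=\Omega_z^\ve\cup\Omega_{as}^\ve$ with $\vect{v}_s^\ve=\vect{v}_z^\ve$ on $\Omega_z^\ve$ and $\vect{v}_s^\ve=\vect{v}_{sp}^\ve$ on $\Omega_{as}^\ve$, the two-scale limit of $\vect{v}_s^\ve$ is $\tilde{\vect{v}}_z+\tilde{\vect{v}}_{sp}$ on $Y_s$, and summing the three extensions identifies the two-scale limit $\overline{\vect{v}}=\tilde{\vect{v}}_z+\tilde{\vect{v}}_a+\tilde{\vect{v}}_{sp}$ of $\vect{v}^\ve$ on $Y$.

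For the pressures I would apply two-scale compactness to the bounded extensions $P_a^\ve$, $P_s^\ve$, obtaining two-scale limits $p_a,p_s\in L^2(\Omega_T\times Y)/\mathbb{R}$. Because $P_a^\ve=p_a^\ve$ on $\Omega_a^\ve$ and $P_s^\ve=p_s^\ve$ on $\Omega_s^\ve$, restricting the admissible test functions gives $p_a^\ve\to p_a|_{\Omega_T\times Y_a}$ and $p_s^\ve\to p_s|_{\Omega_T\times Y_s}$ two-scale. The weak $L^2(\Omega_T)$ convergences in \eqref{conver_v_p_weak} then follow from the general property that two-scale convergence implies weak convergence to the cell average, applied to $P_a^\ve$, $P_s^\ve$ and to $\vect{v}^\ve$, the latter producing $\vect{v}=|Y|^{-1}\bigl(\int_{Y_z}\vect{v}_z\,dy+\int_{Y_a}\vect{v}_a\,dy+\int_{Y_{as}}\vect{v}_{sp}\,dy\bigr)$.

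The remaining and most delicate point is the boundary two-scale convergence $\vect{v}_a^\ve\cdot\vect{n}\to\vect{v}_a\cdot\vect{n}$ on $\Omega_T\times\Gamma_{zs}$. The bound $\ve^{1/2}\|\vect{v}_a^\ve\cdot\vect{n}\|_{L^2((0,T)\times\Gamma_{zs}^\ve)}\leq C$ together with the two-scale compactness theorem on oscillating surfaces (Appendix) produces a boundary two-scale limit $g_0\in L^2(\Omega_T\times\Gamma_{zs})$; the obstacle is to identify $g_0$ with the normal trace of the interior limit $\vect{v}_a$. I would handle this by exploiting that $\vect{v}_a^\ve$ is divergence-free with $\vect{v}_a^\ve\cdot\vect{n}=0$ on $\partial\Omega$, so that for $Y$-periodic test functions $\psi(x,x/\ve)$ the identity $\int_{\Omega_a^\ve}\vect{v}_a^\ve\cdot(\ve\nabla_x\psi+\nabla_y\psi)\,dx=\ve\int_{\Gamma_{zs}^\ve}(\vect{v}_a^\ve\cdot\vect{n})\psi\,d\gamma$ holds; passing to the two-scale limit (the $\ve\nabla_x\psi$ term being negligible) yields $\int_{Y_a}\vect{v}_a\cdot\nabla_y\psi\,dy=\int_{\Gamma_{zs}}g_0\,\psi\,d\gamma_y$ for a.e.\ $(t,x)$. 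Testing the incompressibility with $\ve$-scaled functions first shows $\mathrm{div}_y\vect{v}_a=0$ in $Y_a$, so this relation is exactly the defining property of the normal trace of the divergence-free field $\vect{v}_a$, giving $g_0=\vect{v}_a\cdot\vect{n}$. All assertions hold along one common subsequence by the usual diagonal extraction.
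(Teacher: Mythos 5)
Your proposal is correct and follows essentially the same route as the paper's proof: uniform bounds from \eqref{ap_estim}, \eqref{ap_estim_extension} and \eqref{energy_estim}, two-scale compactness in the bulk and on the oscillating boundary, and identification of the boundary limit through $\operatorname{div}\vect{v}_a^\ve=0$ together with the interior two-scale convergence (your Green's-formula argument simply spells out what the paper leaves implicit). One harmless slip: on $\partial\Omega$ one has $\vect{v}_a^\ve\cdot\vect{n}=v_D$, not $0$; this does not affect the argument, since the identification of the boundary limit only requires test functions compactly supported in $x$, and in any case the extra exterior boundary term carries a factor $\ve$ and vanishes in the limit.
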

\begin{proof}  The convergences in \eqref{conver_v_p} follows directly from  estimates \eqref{ap_estim},  \eqref{ap_estim_extension}  and   \eqref{energy_estim} together with  Lemma~4.1  in \cite{Arbogast}
and the definition and compactness theorems for two-scale convergence, see  \cite{allaire, Nguetseng} or
Definition~\ref{two-scale-conver} and Theorem~\ref{compact1} in Appendix.
Since $\tilde{\vect{v}}_i^\ve$, for $i=z,a, sp$, are zero in $\Omega\setminus \Omega_i^\ve$,  also the two-scale limits are equal to zero in  $Y\setminus Y_i$, respectively.
A priori estimates  \eqref{ap_estim}, \eqref{ap_estim_extension} and the relation between two-scale and weak limits,  \cite{allaire, Arbogast, Nguetseng}, ensure the first two convergences in \eqref{conver_v_p_weak}.
The uniform in $\ve$ boundedness of  $\ve^{1/2} \| {\vect v}_a^\ve\cdot \vect{n}\|_{L^2((0,T)\times\Gamma_{zs}^\ve)}$
implies that there exists  $w\in L^2(\Omega_T\times\Gamma_{zs})$ such that  $\vect{v}^\ve_a \cdot\vect{n}\to w$ two-scale, see  \cite{neuss-radu} or Definition~\ref{two-scale-boundary} and Theorem~\ref{th_comp_b} in Appendix.   Then $\operatorname{div} \vect{v}_a^\ve =0$ in $(0,T)\times\Omega_a^\ve$  and  two-scale convergence of $\vect{v}_a^\ve$ to $\vect{v}_a$  in $\Omega_T\times Y_a$ ensure  $w=\vect{v}_a \cdot\vect{n}$ a.e. on $\Omega_T\times\Gamma_{zs}$. 
\end{proof}

We consider the extension of $c_l^\ve$ from $\Omega^\ve_l$ into $\Omega$, for $l=a,s$, as in Lemma~\ref{extension}, and  identify $c_l^{\ve}$ with its extension.  We denote by $\mathcal T^\ve_{\Gamma_{zs}}: \Gamma^\ve_{zs} \to  \Omega\times \Gamma_{zs}$  the boundary unfolding operator, see \cite{Cioranescu1}  or Definition~\ref{def_unfolding} in Appendix. Here we shall use a shorted notation $\mathcal T^\ve_{\Gamma}(\psi):=\mathcal T^\ve_{\Gamma_{zs}}(\psi) $. 
 
\begin{lemma}\label{convergence_concentration}
Under Assumption~\ref{assump}
there exist  $c_l\in H^1(0,T; H^1(\Omega))\cap L^\infty(\Omega_T)$, $c_l^1 \in L^2(\Omega_T; H^1_{\text{per}}(Y_l)/\mathbb R)$
 and  $ \vartheta_{j,l}\in L^\infty(\Omega_T\times\Gamma_{zs})\cap H^1(0,T; L^2(\Omega\times \Gamma_{zs}))$, with $j=f,b$ and $l=a,s$,  such that, up to a subsequence,
\begin{eqnarray}\label{conc-conver}
& c^\ve_l \to c_l  \text{ weakly  in }  H^1(0,T; H^1(\Omega))\; ,  & \text{ weakly-}\ast \text{ in } L^\infty(\Omega_T)\; , \nonumber \\
& c^\ve_l  \to c_l    \text{ strongly  in }  L^2(0,T; H^s(\Omega))\; , \, \, \,  s<1 \; ,  \, \, & \, \,
 \ve^{1/2} \|c^\ve_l - c_l\|_{L^2((0,T)\times \Gamma^\ve_{zs})} \to 0 \; ,  \nonumber \\
& c^\ve_l \to c_l\; , \, \, \,  \partial_t c^\ve_l \to  \partial_t c_l\; , \,  \,  \,   \nabla  c^\ve_l \to \nabla c_l   + \nabla_y c_l^1 &  \text{ two-scale}\; ,  \\
&  \vartheta_{j,l}^\ve \to \vartheta_{j,l}\; , \, \, \, \partial_t \vartheta_{j,l}^\ve \to \partial_t \vartheta_{j,l} & \text{ two-scale} \; , \nonumber\\
&\mathcal T^\ve_{\Gamma} c_l^\ve \to c_l\; , \,\, \,  \mathcal T^\ve_{\Gamma} \vartheta_{j,l}^\ve \to \vartheta_{j,l} 
 & \text{ strongly in } L^2(\Omega_T\times \Gamma_{zs})\; . \nonumber 
\end{eqnarray}
\end{lemma}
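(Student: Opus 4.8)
The plan is to read off every limit from the uniform-in-$\ve$ a priori bounds of Theorem~\ref{Existence_c}, combining weak/weak-$\ast$ compactness, the Aubin--Lions--Simon lemma, and the two-scale and unfolding compactness results collected in the Appendix. First I would treat the solute concentrations. Having identified $c_l^\ve$ with its extension from Lemma~\ref{extension}, estimates \eqref{energy_estim} and \eqref{time_estim} show that $\{c_l^\ve\}$ is bounded in $H^1(0,T;H^1(\Omega))\cap L^\infty(\Omega_T)$ uniformly in $\ve$. Reflexivity of $H^1(0,T;H^1(\Omega))$ and sequential weak-$\ast$ compactness of bounded sets in $L^\infty$ yield, along a subsequence, a limit $c_l$ in the claimed space together with the weak and weak-$\ast$ convergences. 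For the strong convergence in $L^2(0,T;H^s(\Omega))$, $s<1$, I would invoke Aubin--Lions--Simon with the compact embedding $H^1(\Omega)\hookrightarrow\hookrightarrow H^s(\Omega)$ and the continuous embedding $H^s(\Omega)\hookrightarrow L^2(\Omega)$, using the uniform bound on $\partial_t c_l^\ve$ in $L^2(\Omega_T)$ to control the time modulus of continuity.

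The surface estimate is then immediate: applying the scaled trace inequality \eqref{estim_boundary} to $c_l^\ve-c_l$ gives
\begin{equation*}
\ve\,\|c_l^\ve-c_l\|^2_{L^2((0,T)\times\Gamma^\ve_{zs})}\leq C\big(\|c_l^\ve-c_l\|^2_{L^2(\Omega_T)}+\ve^2\|\nabla(c_l^\ve-c_l)\|^2_{L^2(\Omega_T)}\big),
\end{equation*}
where the first term vanishes by the strong $L^2(\Omega_T)$ convergence (a consequence of the strong $H^s$ convergence) and the second is $O(\ve^2)$ by the uniform $H^1$ bound, so $\ve^{1/2}\|c_l^\ve-c_l\|_{L^2((0,T)\times\Gamma^\ve_{zs})}\to 0$. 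For the two-scale statements, the $L^2(0,T;H^1(\Omega))$-boundedness of $\{c_l^\ve\}$ together with Theorem~\ref{compact1} produces $c_l^1\in L^2(\Omega_T;H^1_{\mathrm{per}}(Y_l)/\mathbb R)$ with $\nabla c_l^\ve\to\nabla c_l+\nabla_y c_l^1$ two-scale, while the two-scale limits of $c_l^\ve$ and $\partial_t c_l^\ve$ coincide with the weak limits $c_l$ and $\partial_t c_l$ by the standard relation between the two notions of convergence. The strong convergence $\mathcal T^\ve_{\Gamma} c_l^\ve\to c_l$ in $L^2(\Omega_T\times\Gamma_{zs})$ follows from the strong bulk convergence together with the continuity of the boundary unfolding operator on traces.

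It remains to handle the transporter concentrations, and this is where I expect the main difficulty. The $L^\infty$-bound on $\vartheta_{j,l}^\ve$ and the weighted bound $\ve^{1/2}\|\partial_t\vartheta_{j,l}^\ve\|_{L^2((0,T)\times\Gamma^\ve_{zs})}\leq C$ from \eqref{time_estim}, fed into the boundary two-scale compactness result (Theorem~\ref{th_comp_b}), deliver $\vartheta_{j,l}\in L^\infty(\Omega_T\times\Gamma_{zs})\cap H^1(0,T;L^2(\Omega\times\Gamma_{zs}))$ with $\vartheta_{j,l}^\ve\to\vartheta_{j,l}$ and $\partial_t\vartheta_{j,l}^\ve\to\partial_t\vartheta_{j,l}$ two-scale. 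The genuinely strong convergence $\mathcal T^\ve_{\Gamma}\vartheta_{j,l}^\ve\to\vartheta_{j,l}$ in $L^2(\Omega_T\times\Gamma_{zs})$ is more delicate, since the transporters solve only the ODE system \eqref{transporter_eq} on the surface and carry no spatial smoothing that would furnish compactness. The route I would take is to apply $\mathcal T^\ve_{\Gamma}$ to \eqref{transporter_eq}: the unfolded transporters then solve the unfolded ODE system driven by $\mathcal T^\ve_{\Gamma} c_l^\ve$, which converges strongly by the previous step. Estimating the difference of two such systems by the Lipschitz continuity of the reaction terms and the already established $L^\infty$-bounds, and closing with Gronwall's inequality in $t$ (with $(x,y)\in\Omega\times\Gamma_{zs}$ as parameters), upgrades the weak-$L^2$ limit to strong convergence and simultaneously identifies the limit as the unique solution of the homogenized surface ODE. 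The compatibility of the resulting $y$-dependence is inherited from the periodic structure encoded in the unfolding operator, completing the identification of all limits.
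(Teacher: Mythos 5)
Your proposal is correct and follows the same skeleton as the paper's proof (compactness from \eqref{energy_estim}--\eqref{time_estim}, strong convergence via the compact embedding of $H^1(0,T;H^1(\Omega))$ into $L^2(0,T;H^s(\Omega))$, the two-scale compactness results of Theorems~\ref{compact1} and \ref{th_comp_b}, and unfolding of the surface ODE system), but two steps take genuinely different routes. For the surface estimate $\ve^{1/2}\|c_l^\ve-c_l\|_{L^2((0,T)\times\Gamma^\ve_{zs})}\to 0$, the paper applies the fractional trace estimate $\ve^{1/2}\|\psi\|_{L^2(\Gamma^\ve_{zs})}\leq C\|\psi\|_{H^s(\Omega_l^\ve)}$ with $1/2<s<1$ and uses the strong $H^s$ convergence, whereas you use the $L^2$--$H^1$ trace inequality \eqref{estim_boundary} applied to the difference, combining strong $L^2$ convergence with the uniform gradient bound so that the term $\ve^2\|\nabla(c_l^\ve-c_l)\|^2_{L^2(\Omega_T)}$ is $O(\ve^2)$; both are valid, and yours is the more elementary, since it avoids fractional-order trace estimates entirely. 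For the transporters, the paper proves that $\{\mathcal T^\ve_{\Gamma}\vartheta_{j,l}^\ve\}$ is a Cauchy sequence in $L^2(\Omega_T\times\Gamma_{zs})$, while you compare $\mathcal T^\ve_{\Gamma}\vartheta_{j,l}^\ve$ directly with the solution of the limit ODE via Lipschitz estimates and Gronwall's inequality; this is essentially equivalent in effort and has the advantage of identifying the limit in the same stroke. One ingredient you must make explicit for the Gronwall argument to close: you need $\mathcal T^\ve_{\Gamma}\vartheta_{j,l}^{\ve,0}\to\vartheta_{j,l}^0$ strongly in $L^2(\Omega\times\Gamma_{zs})$, otherwise the initial term in the Gronwall inequality does not vanish; the paper obtains this from the assumed product structure $\vartheta_{j,l}^0(x,y)=\vartheta_{1j,l}^0(x)\vartheta_{2j,l}^0(y)$ of the initial data together with the strong convergence of $\mathcal T^\ve_{\Gamma}\psi$ for fixed $\psi\in L^2(\Omega)$. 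Similarly, your phrase ``continuity of the boundary unfolding operator on traces'' should be replaced by the precise property $\|\mathcal T^\ve_{\Gamma}(c_l^\ve)-\mathcal T^\ve_{\Gamma}(c_l)\|_{L^2(\Omega_T\times\Gamma_{zs})}\leq C\ve^{1/2}\|c_l^\ve-c_l\|_{L^2((0,T)\times\Gamma^\ve_{zs})}$ combined with $\mathcal T^\ve_{\Gamma}(c_l)\to c_l$ strongly, which is exactly how the paper argues and which your surface estimate already supports.
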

\begin{proof} 
The first two convergences are a direct consequence of the a priori estimates for $c^\ve_l$  in \eqref{energy_estim}-\eqref{time_estim} and   properties of the  extension from $\Omega_l^\ve$  into $\Omega$, with $l=a,s$. 
The compact embedding   
$ H^1(0,T; H^1(\Omega))\subset L^2(0,T; H^{s}(\Omega))$ for $s<1$ ensures the strong convergence of $c^\ve_a$ and $c^\ve_s$. 
Then, the  strong convergence  in $L^2(0,T; H^{s}(\Omega))$, with $1/2<s<1$,  and the estimate
\begin{eqnarray*}
\ve^{1/2}\|c^\ve_l\|_{L^2((0,T)\times \Gamma^\ve_{zs})} \leq C \| c^\ve_l\|_{L^2(0,T; H^s(\Omega^\ve_l))}\; , \quad \qquad \text{ with }  l=a,s\; ,
\end{eqnarray*}
see \cite{AnnaMariya2008} for the proof, give  the fourth  convergence in \eqref{conc-conver}. A priori estimates \eqref{energy_estim}, \eqref{time_estim} and compactness theorems for two-scale convergence,  see \cite{allaire,  neuss-radu, Nguetseng} or Theorem~\ref{compact1} and \ref{th_comp_b} in Appendix,   imply  two-scale convergences stated in \eqref{conc-conver}.  Theorem~\ref{th_comp_b} in Appendix ensures also that $\vartheta_{j,l} \in L^\infty(\Omega_T\times\Gamma_{zs})$.

The  assumed structure of the initial data $\vartheta_{j,l}^{0,\ve}$, i.e. $\mathcal{T}^\ve_{\Gamma}(\vartheta^{0,\ve}_{j,l})(x,y)= \vartheta^0_{2j,l}(y)\mathcal{T}^\ve_{\Gamma}(\vartheta^{0}_{1j,l}(x))$  and the strong convergence in   $L^2(\Omega\times \Gamma_{zs})$ of  $\mathcal{T}^\ve_{\Gamma}(\psi)$ for  $\psi\in L^2(\Omega)$, see \cite{Cioranescu1},
 yield  $\mathcal{T}^\ve_{\Gamma}(\vartheta^{0,\ve}_{j,l}) \to \vartheta^0_{j,l}$   strongly in $L^2(\Omega\times \Gamma_{zs})$, with $j=f,b$ and $l=a,s$. The properties of the unfolding operator,  see \cite{Cioranescu1}, 
 give
\begin{eqnarray*}
\|\mathcal T^\ve_{\Gamma}( c_l^\ve) -\mathcal T^\ve_{\Gamma} (c_l) \|_{L^2(\Omega_T\times\Gamma_{zs})} \leq  C \ve^{1/2}\| c^\ve_l - c_l \|_{L^2((0,T)\times\Gamma_{zs}^\ve)} \quad \text{ for } \, l=a,s \; ,
\end{eqnarray*}
and $\mathcal T^\ve_{\Gamma}(c_l) \to c_l$ strongly in  $L^2(\Omega_T\times\Gamma_{zs})$.
Then, together with the fourth  convergence in  \eqref{conc-conver} we obtain that 
$\mathcal T^\ve_{\Gamma} (c_l^\ve) \to c_l$ strongly in  $L^2(\Omega_T\times\Gamma_{zs})$, where $l=a,s$.
Applying now the unfolding operator to the equations in \eqref{transporter_eq},  using the convergence of $c^\ve_a$, $c^\ve_s$, and the equivalence between the  two-scale convergence of a sequence and the weak convergence of the corresponding unfolded sequence, see  \cite{AnnaMariya2008} or Lemma~\ref{two-scale-unfold} in Appendix,  we can show, in the same manner as in \cite{AnnaMariya2008},   that 
$\{\mathcal T^\ve_{\Gamma}(\vartheta_{j,l}^\ve)\}$, with $j=f,b$ and $l=a,s$,   are  Cauchy sequences 
in $L^2(\Omega_T\times\Gamma_{zs})$ and conclude the last  strong convergences stated in \eqref{conc-conver}. 
\end{proof}

\begin{theorem}
For the  sequence of solutions  $({\vect{v}}_z^\ve , \vect{v}_{a}^\ve, {\vect{v}}_{sp}^\ve) $
and $( p_{z}^\ve, p_a^\ve, p_{sp}^\ve)$  of the microscopic problem \eqref{velosity}, \eqref{BC1velosity}, \eqref{BC3velosity},  we have 
$\vect{v}^\ve \to \vect{v}$ weakly in $ L^2(0,T; H(\text{div},\Omega))$, with $\vect{v}^\ve=\tilde{\vect{v}}_z^\ve + \tilde{\vect{v}}_{sp}^\ve + \tilde {\vect{v}}_a^\ve$,  the  extensions 
$P^\ve_s \to p$, $P^\ve_a\to p$ weakly in $L^2(\Omega_T)/\mathbb R$ as $\ve \to 0$, and  $(\vect{v},p) \in L^2(0,T;H(\text{div},\Omega))\times L^2(\Omega_T)/\mathbb R$ is the unique solution of the   Darcy problem
\begin{equation}\label{Darcy_macro}
\begin{cases}
\vect{ v} + \vect{K} \nabla p =  \vect{M}(c_s-c_a) & \text{ in } (0,T)\times \Omega\; ,\\
\operatorname{div} \vect{v} =0 & \text{ in } (0,T)\times \Omega\; ,\\
\vect{v} \cdot \vect{n} = v_D & \text{ on } (0,T)\times\partial \Omega\; ,
\end{cases}
\end{equation}
where  the  tensor $\vect{K}$  and vector ${\vect M }$ are defined in \eqref{permeab}, and $c_s, c_a$ are solutions of the macroscopic equations \eqref{macro_c_a}-\eqref{macro_transporter_theta}.
\end{theorem}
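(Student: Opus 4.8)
The plan is to derive \eqref{Darcy_macro} by passing to the two-scale limit in the variational identity \eqref{Stokes}, using the convergences already collected in Lemma~\ref{convergence_water} and Lemma~\ref{convergence_concentration}. First I would fix oscillating, divergence-free test functions adapted to $\mathcal V^\ve$: for the symplastic Stokes part a profile $\psi_1(x)=\phi_1(x,x/\ve)$ with $\phi_1(x,\cdot)\in H^1_{\text{per}}(Y_z)$ and $\phi_1\times\vect n=0$ on $\Gamma_{zs}$, so that $\ve\operatorname{S}\psi_1$ two-scale converges to $\operatorname{S}_y\phi_1$; for the apoplastic Darcy parts profiles $\psi_2(x)=\phi_2(x,x/\ve)$ and $\psi_3(x)=\phi_3(x,x/\ve)$ supported in $Y_a$ and $Y_{as}$ and meeting the interface compatibility conditions encoded in $\mathcal V^\ve$. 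Restricting first to $\psi\in L^2(0,T;\mathcal V^\ve_d)$ kills the pressure terms $b(\psi,p^\ve)$, after which each remaining term passes to its two-scale limit: $\ve^2\langle 2\eta\operatorname{S}\vect v_z^\ve,\operatorname{S}\psi_1\rangle\to\langle 2\eta\operatorname{S}_y\vect v_z,\operatorname{S}_y\phi_1\rangle_{\Omega_T\times Y_z}$, the Darcy bilinear terms converge directly against $K_a^{-1}$ and $K_{sp}^{-1}$, and the boundary term is treated separately.

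The delicate term is the osmotic boundary integral $\ve\langle\kappa^\ve\vect v_a^\ve\cdot\vect n+\delta^\ve(c_s^\ve-c_a^\ve),\psi_2\cdot\vect n\rangle_{\Gamma^\ve_{zs},T}$. Here I would invoke boundary two-scale convergence (Theorem~\ref{th_comp_b}): the uniform bound on $\ve^{1/2}\|\vect v_a^\ve\cdot\vect n\|_{L^2(\Gamma^\ve_{zs})}$ gives $\vect v_a^\ve\cdot\vect n\to\vect v_a\cdot\vect n$ two-scale on $\Omega_T\times\Gamma_{zs}$, while the strong convergence $\ve^{1/2}\|c_l^\ve-c_l\|_{L^2((0,T)\times\Gamma^\ve_{zs})}\to0$ from Lemma~\ref{convergence_concentration} lets me replace $c_s^\ve-c_a^\ve$ by the $y$-independent macroscopic difference $c_s-c_a$ in the limit. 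Passing to the limit yields the two-scale cell problem coupling $\vect v_z,\vect v_a,\vect v_{sp}$ with the pressure limits, where $c_s-c_a$ enters as a surface source on $\Gamma_{zs}$. Testing instead with profiles of the form $\ve\,\phi(x)\Psi(x/\ve)$ forces $\nabla_y p_a=\nabla_y p_s=0$, so both pressure limits are $y$-independent, and the continuity relations for the pressures on $\Gamma_{as}$, $\Gamma_{aw}$ together with the definitions of $P_a^\ve$, $P_s^\ve$ identify $p_a=p_s=:p$; the two-scale limit then upgrades to the weak $L^2$ convergence of the extensions asserted in \eqref{conver_v_p_weak}.

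Next I would exploit linearity of the cell problem. Writing the driving data as the pair $(-\nabla_x p,\;c_s-c_a)$, unique solvability of the cell problem (obtained exactly as in Theorem~\ref{Existence_Stokes} via the abstract saddle-point theory) decomposes $(\vect v_z,\vect v_a,\vect v_{sp})$ into the superposition of responses $\vect w^i$ to $\partial_{x_i}p$ and a response $\vect W$ to $c_s-c_a$; averaging these cell solutions over $Y$ produces precisely the tensor $\vect K$ and vector $\vect M$ of \eqref{permeab}. Setting $\vect v=|Y|^{-1}\big(\int_{Y_z}\vect v_z+\int_{Y_a}\vect v_a+\int_{Y_{as}}\vect v_{sp}\big)\,dy$ as in \eqref{conver_v_p_weak}, the averaged cell identity becomes $\vect v+\vect K\nabla p=\vect M(c_s-c_a)$. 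The constraint $\operatorname{div}\vect v=0$ follows because the glued microscopic field is already divergence-free in $\Omega$ (the normal-trace matching built into $\mathcal V^\ve$) and $\vect v^\ve\to\vect v$ weakly in $L^2(0,T;H(\operatorname{div},\Omega))$, which also transfers the normal trace so that $\vect v\cdot\vect n=v_D$ on $\partial\Omega$ is inherited from \eqref{BC3velosity} through $V^\ve_D=\tilde R_{\Omega^\ve_a}V_D$.

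Uniqueness for \eqref{Darcy_macro} is then standard: uniform ellipticity of $\vect K$, inherited from the coercivity of $a^\ve$ and the ellipticity of $K_a,K_{sp}$ in Assumption~\ref{assump}, makes the Darcy system well posed, so the whole family converges to the unique limit and no further subsequence is needed. I expect the main obstacle to be the construction of oscillating test functions that are simultaneously divergence-free and compatible with the interface conditions of $\mathcal V^\ve$ on $\Gamma_z$ and $\Gamma_{as}$, together with the rigorous two-scale passage in the osmotic surface term; the former will rely on the cell restriction operators of \eqref{restriction_Os} adapted to test functions, and the latter on combining boundary two-scale convergence with the strong boundary convergence of the concentrations.
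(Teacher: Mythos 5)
Your overall strategy coincides with the paper's: pass to the two-scale limit in the variational formulation \eqref{Stokes}; use $\ve$-scaled oscillating test functions to show that the pressure limits are $y$-independent and to identify $p_a=p_s=p$; treat the osmotic surface term by combining the boundary two-scale convergence of $\vect v_a^\ve\cdot\vect n$ with the strong boundary convergence of $c_l^\ve$; decompose the resulting two-scale Stokes--Darcy system by linearity into cell problems driven by $\partial_{x_i}p$ and by $c_s-c_a$; and average to obtain \eqref{permeab} and \eqref{Darcy_macro}. The transfer of $\operatorname{div}\vect v=0$ and of the boundary condition $\vect v\cdot\vect n=v_D$ via weak convergence of the glued field is also exactly the paper's argument.

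There is, however, one step that as written does not work. You first restrict to fully divergence-free test functions $\psi\in L^2(0,T;\mathcal V^\ve_d)$, which kills $b(\psi,p^\ve)$, and then assert that passing to the limit "yields the two-scale cell problem \dots with the pressure limits." These two claims are incompatible: once the microscopic pressure terms have been removed, no pressure can appear in the limit identity, so the coupled system \eqref{coupled_macro}--\eqref{boundary_macro} with forcing $\nabla_x p+\nabla_y p_{1,l}$ --- which is precisely what your superposition ansatz requires --- does not follow from that computation. To repair it along your route you would need a two-scale De Rham argument (the limit functional vanishes on admissible divergence-free fields, hence is a gradient) plus an identification of the macroscopic part of that gradient with the weak limit $p$ of the extensions $P^\ve_l$, which again forces you to test with fields that are not divergence-free. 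The paper sidesteps this entirely: its test functions $\psi_i(t,x,x/\ve)$ are divergence-free only in $y$ ($\operatorname{div}_y\psi_1=0$ in $Y$, $\operatorname{div}_y\psi_2=0$ in $Y_s$), so that $\operatorname{div}\psi^\ve=\operatorname{div}_x\psi$ is $O(1)$, the terms $\langle p^\ve_l,\operatorname{div}_x\psi\rangle$ survive and converge against the pressure limits, and \eqref{macro_eq_water_2} emerges with the pressure already in place. A minor second point: the identification $p_a=p_s$ is obtained in the paper by testing across the membrane interface $\Gamma_z$ with an $\ve$-scaled field whose normal component does not vanish there, giving $\langle p_a-p_s,\psi_1\cdot\vect n\rangle_{\Omega\times\Gamma_z,T}=0$; invoking "continuity relations on $\Gamma_{as}$, $\Gamma_{aw}$", as you do, does not by itself connect the symplastic and apoplastic pressure limits.
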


\begin{proof}
 The weak convergences of $\vect{v}^\ve$ in $L^2(\Omega_T)$ and $P^\ve_l$ in   $L^2(\Omega_T)/\mathbb R$, with $l=a,s$, follows from Lemma~\ref{convergence_water}.
Using  $\text{div } \vect{v}^\ve \in L^2(\Omega_T)$ and $\text{div } \vect{v}^\ve =0 $ in  $\Omega_T$, and applying  the weak convergence of $\vect{v}^\ve$ we obtain for $\psi\in C^\infty_0(\Omega_T)$
\begin{equation}\label{div_bc_1}
0=\lim\limits_{\ve\to 0} \langle \text{div } \vect{v}^\ve, \psi \rangle_{\Omega,T} = \lim\limits_{\ve\to 0}\langle \vect{v}^\ve, \nabla \psi \rangle_{\Omega,T} = \langle  {\vect{v}}, \nabla \psi \rangle_{\Omega,T}
= \langle\text{div }{\vect{v}}, \psi \rangle_{\Omega,T} \; .
\end{equation}
For $\psi \in C^{\infty}(\Omega_T)$ we have 
\begin{eqnarray}\label{div_bc_2}
0=\lim\limits_{\ve\to 0}\langle \text{div } \vect{v}^\ve, \psi\rangle_{\Omega,T} =\lim\limits_{\ve\to 0}\big( \langle {\vect v}^\ve \cdot \vect{n}, \psi\rangle_{\partial \Omega,T}- \langle \vect{v}^\ve, \nabla \psi \rangle_{\Omega,T}\big)= \langle v_D, \psi \rangle_{\partial \Omega,T}- \langle \vect{v}, \nabla \psi \rangle_{\Omega,T} \; ,
\end{eqnarray}
and, together with $\text{div }{\vect{v}}=0$ in $\Omega_T$,  given by \eqref{div_bc_1}, conclude $\vect{v}\cdot \vect{n}=v_D$ on $(0,T)\times \partial \Omega$.

Considering  $\psi_1, \psi_2 \in C^\infty_0(\Omega_T; C^\infty_{per}(Y))$ 
with $\psi_1(t,x,y) =0 $ in $Y\setminus Y_s$, such that $\psi_1\times \vect{n} =0$ on $\Gamma_{as}$,  and 
$\psi_2(t,x,y)=0$ in $Y\setminus Y_a$, for $(t,x) \in \Omega_T$,
 taking  $\psi^\ve(t,x)= \ve (\psi_1(t, x, x/\ve), \psi_2(t,x,x/\ve), \psi_1(t,x, x/\ve))$  as a test function in  \eqref{Stokes} and applying two-scale convergences 
 of $\vect{v}_j^\ve$, with $j=z,a,sp$ and of $p_l^\ve$, with $l=a,s$,  stated in Lemma~\ref{convergence_water}, yield
\begin{eqnarray*}
\langle  p_s , \text{ div}_y \psi_1 \rangle_{\Omega\times Y_s, T}
 + \langle  p_a, \text{ div}_y  \psi_2\rangle_{\Omega\times Y_a, T} =0 \; .
\end{eqnarray*}
This implies   that 
 $p_l \in L^2(\Omega_T; H^1(Y_l))$ with   $\nabla_y p_l=0$ a.e. in $\Omega_T\times Y_l$, where $l=a,s$. Thus  $p_l$ is independent of $y$ in $\Omega_T\times Y_l$ and  $p_l=p_l(t,x)$ a.e.  in $\Omega_T\times Y_l$, where $l=a,s$. 
Taking  now $\psi^\ve(t,x)= \ve (\psi_1(t,x, x/\ve), \psi_1(t,x,x/\ve), 0)$ with  $\psi_1\in C^\infty_0(\Omega_T; C^\infty_{per}(Y))$ and $\psi_1(t,x,y)=0$  in $\overline Y_{as}$  and   $\psi_1\times \vect{n} =0$ on $\Gamma_{z}$ as  a test function in  \eqref{Stokes}  and letting  $\ve\to 0$ give
\begin{eqnarray*}
\langle p_{a}(t,x)-p_s(t,x), \psi_1(t,x,y)\cdot \vect{n} \rangle_{\Omega\times \Gamma_z, T}=0 \; .
\end{eqnarray*}
Thus  $p_{a}(t,x)=p_s(t,x)$ a.e. in $\Omega_T\times\Gamma_z$ and $p_{a}(t,x)=p_s(t,x) = p(t, x) \, \text{ a.e. in }  \Omega_T$.

Now we consider   $\psi^\ve(t,x)=(\psi_1(t,x,x/\ve)+\psi_2(t,x,x/\ve), \psi_1(t,x,x/\ve), \psi_2(t,x,x/\ve))$, where  functions
$\psi_1, \psi_2 \in C^\infty_0(\Omega_T, C^\infty_{per}(Y))$ with $\psi_2\cdot \vect{n} =0 $ on $\Gamma_z\cup \Gamma_{aw}$  and 
$\psi_2\times\vect{n}=-\psi_1\times\vect{n}$ on $\Gamma_{zs}$ and 
$\text{div}_y \psi_1 =0$ in $\Omega_T\times Y$, $\text{div}_y\psi_2=0$ in $\Omega_T\times Y_s$ as a test function in  \eqref{Stokes}. The two-scale convergences in \eqref{conver_v_p} and \eqref{conver_v_p_weak},
 and the convergence of $c^\ve_a$ and $c^\ve_s$ on $(0,T)\times\Gamma_{zs}^\ve$ stated in \eqref{conc-conver}  imply
\begin{equation}\label{macro_eq_water_2}
\begin{aligned}
\langle 2 \eta \operatorname{S}_y \! {\vect v}_z,   \operatorname{S}_y (\psi_1+\psi_2) \rangle_{Y_z\times\Omega_T}-\langle  p, \text{div}_x (\psi_1+\psi_2) \rangle_{Y_z\times\Omega_T} +\langle K^{-1}_a(y) {\vect v}_a ,  \psi_1 \rangle_{Y_a\times\Omega_T}\\
  -  \langle p , \text{div}_x  \psi_1\rangle_{Y_a\times\Omega_T}  +\langle K^{-1}_{sp}(y) {\vect v}_{sp} ,  \psi_2 \rangle_{Y_{as}\times\Omega_T} -  \langle p , \text{div}_x  \psi_2 \rangle_{Y_{as}\times\Omega_T}\\
+ \langle { \delta(y)} (c_s-c_a) +  \kappa(y) {\vect v}_a \cdot \vect{n}, \psi_1\cdot \vect{n} \rangle_{\Gamma_{zs}\times \Omega_T}  & = 0 \; , 
\end{aligned}
\end{equation}
where $\operatorname{S}_y \!{\vect v}= 1/2(\nabla_y {\vect v}+ \nabla_y {\vect v}^T)$. Choosing  $\psi_1=0$,  restricting $\psi_2$ to $Y_z$, i.e. $\psi_2\in C^\infty_0(\Omega_T, C^\infty_0(Y_z))$, and applying the integration by parts imply
\begin{eqnarray*}
 \langle -2\eta \text{ div}_y( \operatorname{S}_y\!{\vect v}_z)+ \nabla_x p,  \psi_2 \rangle_{Y_z\times\Omega_T} =0 \; .
\end{eqnarray*}
Since $\text{div}_y \psi_2(t,x,y) =0$ in $\Omega_T\times Y_z$, there exists $p_{1,z} \in L^2(\Omega_T, L^2_{\text{per}}(Y_z)/\mathbb R)$, see \cite{Girault, hornung},  such that
\begin{eqnarray}\label{eq_Stokes_macro1}
 -2\eta \text{ div}_y (\operatorname{S}_y\!{\vect v}_z)+ \nabla_x p+ \nabla_y p_{1,z}=0 \quad \text{ in }\, \, \Omega_T\times Y_z \; .
\end{eqnarray}
 Similarly we obtain  the existence of $p_{1,a} \in L^2(\Omega_T, L^2_{\text{per}}(Y_{a})/\mathbb R)$, $p_{1,sp} \in L^2(\Omega_T, L^2_{\text{per}}(Y_{as})/\mathbb R)$ and
 \begin{align}\label{eq_Darcy_macro1}
 \begin{aligned}
 K^{-1}_a{\vect v}_a+ \nabla_x p+ \nabla_y p_{1,a}=0 &\quad \text{ in } \Omega_T\times Y_a\; , \\
K^{-1}_{sp} {\vect v}_{sp}+ \nabla_x p+ \nabla_y p_{1,sp}=0 &\quad \text{ in } \Omega_T\times Y_{as}\; .
\end{aligned}
\end{align}
Considering $\text{div }\vect{v}^\ve_j \in L^2((0,T)\times\Omega_j^\ve)$ and $\text{div } \vect{v}_j^\ve=0$ in $(0,T)\times\Omega_j^\ve$,    
with  $j = z, a, sp$ and $\Omega_{sp}^\ve := \Omega_{as}^\ve$, 
we obtain for $\psi \in C^\infty_0((0,T)\times\Omega; C^\infty_0(Y_j))$, where $Y_{sp}:= Y_{as}$,
\begin{equation*}
0=\lim\limits_{\ve\to 0} \langle \text{div } \vect{v}_{j}^\ve,  \psi(t,x , x/\ve) \rangle_{\Omega_j^\ve, T} = \lim\limits_{\ve\to 0}
\langle\vect{v}_{j}^\ve, \nabla_x \psi(t, x , x/\ve) + \ve^{-1}\nabla_y \psi(t,x , x /\ve)   \rangle_{\Omega_j^\ve, T}\; . 
\end{equation*}
The  two-scale convergence of $\vect{v}_j^\ve$ 
ensures $\lim\limits_{\ve\to 0}\langle \vect{v}_{j}^\ve(t,x), \nabla_y \psi(t, x ,x/\ve) \rangle_{\Omega_j^\ve,T}  = 0$ and then implies that 
$
\text{div}_y \vect{v}_z=0 \text{ in }\Omega_T\times Y_z$,  $\text{div}_y \vect{v}_{sp}=0 \text{ in }\Omega_T\times Y_{as}$, and 
$\text{div}_y \vect{v}_{a}=0 \text{ in }\Omega_T\times Y_a.$ \\
Similarly, using  $\text{div }\vect{v}^\ve \in L^2(\Omega_T)$ and $\text{div }\vect{v}^\ve= 0$ in $\Omega_T$,  the two-scale convergence of $\vect{v}^\ve$ ensures
 $\text{ div}_y \overline{\vect{v}} = 0$ in $\Omega_T\times Y$.

Considering the two-scale convergence of   $\vect{v}^\ve_z$,  $\vect{v}^\ve_a$,  $\vect{v}^\ve_{sp}$, and using the calculations similar to  \eqref{div_bc_1}-\eqref{div_bc_2} we can    conclude  $\vect{v}_z \cdot \vect{n} = (\vect{v}_a+ \vect{v}_{sp}) \cdot \vect{n} $ on $\Omega_T\times\Gamma_{as}$ and $\vect{v}_z\cdot\vect{n} = \vect{v}_a\cdot\vect{n}$ on $\Omega_T\times \Gamma_{z}$.
 Inequality \eqref{estim_boundary}, applied to $\vect{v}_z^\ve$, with $\Gamma_{zs}^\ve$ and $\Omega_z^\ve$, and the estimates for  $\| \vect{v}_z^\ve\|_{L^2((0,T)\times \Omega^\ve_{z})} $   and  $\ve \| \nabla \vect{v}_z^\ve\|_{L^2((0,T)\times \Omega^\ve_{z})}$ in \eqref{ap_estim} ensure the uniform in $\ve$ boundedness of $\ve^{1/2} \| \vect{v}_z^\ve\|_{L^2((0,T)\times \Gamma^\ve_{zs})}$.
Then the compactness theorem for the two-scale convergence on  oscillating boundaries, see \cite{neuss-radu} or Theorem \ref{th_comp_b} in Appendix,   together with the two-scale convergence of $\ve \nabla \vect{v}^\ve_z$  and  $\text{div }  \vect{v}^\ve_z=0$, as well as   $\text{div}_y  \vect{v}_z=0$, implies
the two-scale convergence of $\vect{v}^\ve_z$ to  $\vect{v}_z$ on $\Omega_T\times\Gamma_{zs}$ and ensures
$\vect{v}_z\times\vect{n} =0 $  on $\Omega_T\times\Gamma_{zs}$.

Applying in 
\eqref{macro_eq_water_2} integration by parts  and   accounting equations  \eqref{eq_Stokes_macro1} and \eqref{eq_Darcy_macro1} yield
 \begin{align*}
 \langle 2\eta (\operatorname{S}_y\!\vect{v}_z\vect{n}) \cdot  \vect{n} - p_{1,z}, \psi_1\cdot \vect{n} \rangle_{\Omega_T\times\Gamma_z}+
 \langle 2\eta ( \operatorname{S}_y\!\vect{v}_z\vect{n} )\cdot \vect{n}- p_{1,z}, (\psi_1+\psi_2)\cdot \vect{n} \rangle_{\Omega_T\times\Gamma_{as}} \\
 + \langle p_{1,a} +   \delta(y) (c_s-c_a)+ \kappa(y) \vect{v}_a \cdot \vect{n}, \psi_1\cdot \vect{n} \rangle_{\Omega_T\times\Gamma_{zs}} +
\langle p_{1,sp}, \psi_2\cdot \vect{n} \rangle_{\Omega_T\times\Gamma_{as}} &=0 
 \end{align*} 
for  $\psi_1, \psi_2 \in C^\infty_0(\Omega_T, C^\infty_{per}(Y))$ with $\psi_2\cdot \vect{n} =0 $ on $\Gamma_z\cup\Gamma_{aw}$ and 
$\psi_2\times\vect{n}=-\psi_1\times\vect{n}$ on $\Gamma_{zs}$,  $\text{div}_y \psi_1 =0$ in $\Omega_T\times Y$, $\text{div}_y\psi_2=0$ in $\Omega_T\times Y_s$. Thus, we conclude that   $\vect{v}_z \in L^2(\Omega_T; H^1_{\text{per}}(Y_z)/\mathbb R)$, $\vect{v}_{sp}\in L^2(\Omega_T\times Y_{as})$,  $\vect{v}_a \in L^2(\Omega_T\times Y_a)$, and $p\in L^2(\Omega_T)/\mathbb R$,  
$p_{1,z} \in L^2(\Omega_T; L^2_{\text{per}}(Y_z)/ \mathbb R)$, $p_{1,sp} \in L^2(\Omega_T; L^2_{\text{per}}(Y_{as})/ \mathbb R)$,  and 
$p_{1,a} \in L^2(\Omega_T; L^2_{\text{per}}(Y_a)/ \mathbb R)$,  satisfy equations 
\begin{equation}
\begin{cases}\label{coupled_macro}
  -\eta\Delta_y {\vect v}_z + \nabla_x p  + \nabla_y p_{1,z} \;  \ =  0 & \text{ in } \Omega_T\times Y_z\; ,\\
 \hspace{0.3 cm}  K^{-1}_a {\vect v}_{a} + \nabla_x p + \nabla_y p_{1,a}\,\,  =0 & \text{ in } \Omega_T\times Y_{a}\; , \\
 \hspace{0.2 cm} K^{-1}_{sp} {\vect v}_{sp} + \nabla_x p + \nabla_y p_{1, sp} =0 & \text{ in } \Omega_T\times Y_{as}\; , \\
  \text{div}_y {\vect v}_z = 0 \, \,  \text{ in } \Omega_T\times Y_z\; ,  \quad
\text{div}_y {\vect v}_a = 0  \, \, \text{ in } \Omega_T\times Y_a\; , \quad \text{div}_y {\vect v}_{sp}=0 &  \text{ in } \Omega_T\times Y_{as}\;,
 \end{cases}
 \end{equation}
 with  boundary and transmission conditions 
 \begin{equation}
\begin{cases}\label{boundary_macro}
  - 2 \eta (\operatorname{S}_y\! {\vect v}_z\vect{n})\cdot \vect{n} + p_{1,z}-p_{1,a} =  
 \delta(y) (c_s-c_a) +  \kappa(y) {\vect  v}_a\cdot  \vect{n}  &  \text{ on }\Omega_T\times \Gamma_{zs}\; ,\\
-2 \eta (\operatorname{S}_y {\vect v}_z\vect{n})\cdot \vect{n} + p_{1,z}=p_{1, sp}  &  \text{ on } \Omega_T\times \Gamma_{as}\; ,\\
 \vect{v}_z\cdot \vect{n}= \vect{v}_{a}\cdot \vect{n} + \vect{v}_{sp}\cdot \vect{n}\; , \ \vect{v}_z\times \vect{n} =0 & \text{ on } \Omega_T\times \Gamma_{as}\; , \\
  \vect{v}_z\cdot \vect{n}= \vect{v}_{a}\cdot \vect{n}\; , \ \qquad\qquad
    \vect{v}_z\times \vect{n} =0  &   \text{ on } \Omega_T\times\Gamma_{z}\; ,\\
      \vect{v}_{sp}\cdot \vect{n}_{aw}= 0 &  \text{ on } \Omega_T\times\Gamma_{aw}\; .
 \end{cases}
 \end{equation}
Considering the structure, linearity, and uniqueness of a solution of  equations  \eqref{coupled_macro} and \eqref{boundary_macro}, the proof of which follows the same lines as for microscopic model \eqref{Stokes},  
we can express $\vect{v}_l$  and $p_{1,l}$, with $l=z, a$ or $sp$,  in the form   
{\small
\begin{eqnarray}\label{ansatz}
\begin{cases}
{\vect v}_z =- \sum\limits_{i=1}^3\partial_{x_i} p\, \,  {\vect w}^i_z  + (c_s-c_a){\vect  r}_z \; ,\quad   
{\vect v}_a =- \sum\limits_{i=1}^3   \partial_{x_i} p\, \,  {\vect w}^i_a  + (c_s-c_a){\vect  r}_a\; , &
{\vect v}_{sp} =- \sum\limits_{i=1}^3\partial_{x_i} p\,  \,    {\vect w}^i_{sp}\; ,  \\
p_{1,z} =- \sum\limits_{i=1}^3 \partial_{x_i} p\, \,  \pi^i_z  + (c_s-c_a)\zeta_z\; ,  \quad 
p_{1,a} =- \sum\limits_{i=1}^3\partial_{x_i} p\, \,  \pi^i_a  + (c_s-c_a)\zeta_a\; , &   p_{1, sp} = -\sum\limits_{i=1}^3\partial_{x_i} p\, \,   \pi^i_{sp}\; , 
\end{cases} 
\end{eqnarray}}
where $\vect{w}_l^i$, $\pi_l^i$, with $l=z,a$ or $sp$, are solutions of the unit cell problems  
\begin{eqnarray*}
\begin{cases}
 -\eta\Delta_y {\vect w}^i_z + \nabla_y \pi^i_z = e_i  & \text{ in } Y_z\; , \\
 \ \ K^{-1}_a {\vect w}^i_a + \nabla_y \pi^i_a = e_i & \text{  in } \, Y_a\; ,\\
 K^{-1}_{sp} {\vect w}^i_{sp} + \nabla_y \pi^i_{sp} = e_i  &  \text{  in } \, Y_{as}\; , \\
  \text{ div}_y \vect{w}^i_z=0 \quad  \text{ in } Y_z\; ,    & \text{ div}_y \vect{w}^i_a=0 \quad \text{ in }  Y_a\; , \quad \text{ div}_y \vect{w}^i_{sp}=0 \quad \text{ in }  Y_{as} \; ,
\end{cases} 
\end{eqnarray*}
with  transmission conditions 
\begin{eqnarray*}
\begin{cases}
-2 \eta (\operatorname{S}_y {\vect w}^i_z\vect{n}) \cdot \vect{n} + \pi_{z}^i - \pi_a^i =\kappa(y) \, {\vect w}^i_a\cdot \vect{n} 
& \text{ on } \Gamma_{zs}\; , \\
  -2 \eta (\operatorname{S}_y {\vect w}^i_z\vect{n}) \cdot \vect{n} + \pi_{z}^i = \pi_{sp}^i  & \text{ on } \Gamma_{as}\; , \\
   \vect{w}_z^i \cdot \vect{n} =( \vect{w}_a^i+ \vect{w}_{sp}^i)\cdot \vect{n}\; , \quad  {\vect w}^i_z\times \vect{n} = 0 &  \text{ on } \Gamma_{as}\; , \\
 \vect{w}_z^i\cdot \vect{ n}= {\vect w}^i_a\cdot \vect{n}\; , \ \ \qquad\qquad {\vect w}^i_z\times \vect{n} = 0 & \text{ on } \Gamma_{z}\; , \\
  \vect{w}_{sp}^i \cdot \vect{n}_{aw} =0   & \text{ on } \Gamma_{aw} \; , 
 \end{cases}
\end{eqnarray*}
and $\vect{r}_l$, $\zeta_l$, with $l=z$ or $a$, solve  the following problem 
\begin{eqnarray*}
\begin{cases}
 -\eta\Delta_y {\vect r}_z + \nabla_y \zeta_z = 0 & \text{ in }  Y_z\; , \\
  K^{-1}_a {\vect r}_a + \nabla_y \zeta_a = 0 & \text{ in }  Y_a\; ,\\
-2  \eta(\operatorname{S}_y {\vect r}_z \vect{n}) \cdot \vect{n} + \zeta_z - \zeta_a = \delta(y) + \kappa(y)\,  {\vect r}_a\cdot \vect{n} & \text{ on } \Gamma_{zs}\; ,\\
 \vect{r}_a\cdot \vect{ n}= {\vect r}_z\cdot \vect{n}\; , \ \;\qquad\qquad\qquad\qquad  \vect{r}_z\times \vect{ n}=  0 &\text{ on } \Gamma_{zs}\; , \\
\text{ div}_y {\vect r}_z  =0 \quad \text{ in }  Y_z\; , \qquad\qquad\qquad  \text{ div}_y {\vect r}_a  =0  & \text{ in }  Y_a \; . 
\end{cases}
\end{eqnarray*}

Averaging the expressions in \eqref{ansatz}  over $Y_z$, $Y_a$ and $Y_{as}$, respectively, using the definition of $\vect{v}$ and  defining the vector $\vect{M}$ and the permeability tensor ${\vect K}=(K_{ij})_{1\leq i,j\leq 3}$ by
\begin{eqnarray}\label{permeab}
\vect{M}= \frac 1 {|Y|} \Big( \int_{Y_z} \vect{r}_z dy +   \int_{Y_a} \vect{r}_a dy \Big)\; ,\quad
K_{ij} = \frac 1 {|Y|}\Big( \int_{Y_z} \vect{w}^i_{z,j} dy +   \int_{Y_a} \vect{w}^i_{a,j} dy + \int_{Y_{as}} \vect{w}^i_{sp,j} dy \Big) \; ,
\end{eqnarray}
we obtain the equation for $\vect{v}$  stated in \eqref{Darcy_macro}. 
\end{proof}


For  concentrations of a osmotically active solute and transporters  on the cell membrane we obtain the following macroscopic problem 
\begin{theorem}
The sequence of   solutions  of the microscopic model \eqref{Eq_Conc}-\eqref{init_cond}, \eqref{ExternalBC_N} converges to a unique solution 
$c_l \in H^1(0,T;H^1(\Omega))\cap L^\infty(\Omega_T)$, 
 $\vartheta_{j,l} \in H^1(0,T; L^2(\Omega\times\Gamma_{zs}))\cap L^\infty(\Omega_T\times\Gamma_{zs})$, with $l=a,s$, $j=f,b$,  of the initial boundary values problem in $\Omega_T=(0,T)\times \Omega$
\begin{eqnarray}\label{macro_c_a}
 \begin{cases}
  \partial_t c_a -\operatorname{div}( \mathcal A_{a}\nabla c_a-\hat H_M({\vect{v}}_{a}) \, c_a) = \frac 1{|Y_a|}
 \langle F_a(t,y, c_a), 1\rangle_{Y_a} +
  \frac 1{|Y_a|} \langle\beta_s(t,y) \vartheta_{b,s} - c_a \alpha_a(t,y) \vartheta_{f,a}, 1\rangle_{\Gamma_{zs}}, \\
 (\mathcal A_{a}\nabla c_a - \hat H_M({\vect{v}}_{a})  \, c_a)\cdot \vect{n}=0 \hspace{6.1 cm} \text{ on } \, (0,T)\times\partial\Omega, \\
 c_a(0,x) = c_a^0(x) \hspace{8.4 cm} \text{ in } \Omega , 
\end{cases} \hspace{-1.5 cm } 
\end{eqnarray}
and
\begin{eqnarray}\label{macro_c_s}
\begin{cases}
 \partial_t c_s -\operatorname{div}( \mathcal A_{s}\nabla c_s- \hat H_M({\vect{v}}_{s}) \, c_s) = 
 \frac 1{|Y_s|} \langle F_s(t,y, c_s), 1\rangle_{Y_s}  +   \frac 1{|Y_s|}
\langle
\beta_a(t,y) \vartheta_{b,a} - c_s \alpha_s(t,y) \vartheta_{f,s}, 1\rangle_{\Gamma_{zs}}, \\
 (\mathcal A_{s}\nabla c_s - \hat H_M({\vect{v}}_{s})  \, c_s)\cdot \vect{n}=0 \hspace{6. cm} \text{ on } \, (0,T)\times\partial\Omega\; , \\
 c_s(0,x) = c_s^0(x) \hspace{8.1 cm} \text{ in } \Omega \; .  
\end{cases}  \hspace{-1.5 cm } 
\end{eqnarray}
where homogenized diffusion coefficients $\mathcal A_a$, $\mathcal A_s$ and velocity fields
$\hat H_M({\vect v}_a)$,  $\hat H_M({\vect v}_s)$ are defined by \eqref{macro_diff_velocity} with $\vect{v}_a$ and $\vect{v}_s$ given as solutions of 
\eqref{coupled_macro}, \eqref{boundary_macro}, 
and  transporter concentrations satisfy ordinary differential equations, for $l=a,s$,
\begin{eqnarray}
\begin{cases}
  \partial_t \vartheta_{f,l}  =  R_l(t,y,\vartheta_{f,l})-  \alpha_l(t,y) c_l \vartheta_{f,l} + \beta_l(t,y) \vartheta_{b,l} -\gamma_{f,l}(t,y) \vartheta_{f,l} & \text{ on } \Omega_T\times \Gamma_{zs}\; , \\
  \partial_t \vartheta_{b,l}  = \phantom{ R_l(t,y,\vartheta_{f,l})-  }\, \, \,  \alpha_l (t,y) c_l \vartheta_{f,l} - \beta_l(t,y) \vartheta_{b,l}-\gamma_{b,l}(t,y) \vartheta_{b,l}  & \text{ on } \Omega_T\times \Gamma_{zs}\; ,\\
  \vartheta_{f,l}(0,x,y)=\vartheta_{f,l}^0(x,y),  \,  \vartheta_{b,l}(0,x,y)=\vartheta_{b,l}^0(x,y) & \text{ on } \Omega\times \Gamma_{zs}\; .
  \end{cases} \label{macro_transporter_theta}
\end{eqnarray}

\end{theorem}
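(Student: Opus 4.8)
The plan is to pass to the two-scale limit in the weak formulations \eqref{def_apop} and \eqref{def_symp} using the convergences collected in Lemma~\ref{convergence_concentration} and Lemma~\ref{convergence_water}. For $l=a,s$ I would test with functions of the two-scale form $\varphi_l^\ve(t,x)=\varphi_l(t,x)+\ve\,\varphi_l^1(t,x,x/\ve)$, where $\varphi_l\in C_0^\infty([0,T)\times\overline\Omega)$ and $\varphi_l^1\in C_0^\infty(\Omega_T;C_{\text{per}}^\infty(Y_l))$, so that the macroscopic equation and the cell problem can be extracted from one and the same identity. Since $\Omega_l^\ve$ is independent of $t$, the time-derivative term converges to $|Y_l|\,\langle\partial_t c_l,\varphi_l\rangle_{\Omega,T}$ by the two-scale convergence of $\partial_t c_l^\ve$, while the diffusion term $\langle D_l^\ve\nabla c_l^\ve,\nabla\varphi_l^\ve\rangle$ tends to $\langle D_l(t,y)(\nabla c_l+\nabla_y c_l^1),\nabla\varphi_l+\nabla_y\varphi_l^1\rangle_{\Omega_T\times Y_l}$, using $\nabla c_l^\ve\to\nabla c_l+\nabla_y c_l^1$ two-scale and the ellipticity and regularity of $D_l$ from Assumption~\ref{assump}.

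The two delicate contributions are the nonlinear convection and the boundary reaction. For the convection I would use that $H_M$ is bounded and Lipschitz, so that $H_M(\vect v_l^\ve)$ inherits a two-scale limit $H_M(\vect v_l)$ from the convergence of the truncated velocities; combined with the strong convergence $c_l^\ve\to c_l$ in $L^2(0,T;H^s(\Omega))$ from \eqref{conc-conver} this gives $\langle H_M(\vect v_l^\ve)c_l^\ve,\nabla\varphi_l\rangle\to\big\langle\big(\int_{Y_l}H_M(\vect v_l)\,dy\big)c_l,\nabla\varphi_l\big\rangle_{\Omega,T}$, since $c_l$ and $\nabla\varphi_l$ are $y$-independent, and analogously against $\nabla_y\varphi_l^1$. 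The integrals on $\Gamma_{zs}^\ve$ carry the prefactor $\ve$ that balances the surface measure $|\Gamma_{zs}^\ve|\sim\ve^{-1}$; applying the boundary unfolding operator and the strong convergences $\mathcal{T}^\ve_{\Gamma}c_l^\ve\to c_l$ and $\mathcal{T}^\ve_{\Gamma}\vartheta_{j,l}^\ve\to\vartheta_{j,l}$ in $L^2(\Omega_T\times\Gamma_{zs})$ from \eqref{conc-conver}, together with the boundedness of $\alpha_l,\beta_l$, produces exactly the surface reaction terms on the right-hand sides of \eqref{macro_c_a} and \eqref{macro_c_s}.

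Choosing $\varphi_l\equiv0$ isolates the cell problem $\langle D_l(\nabla c_l+\nabla_y c_l^1)-H_M(\vect v_l)c_l,\nabla_y\varphi_l^1\rangle_{\Omega_T\times Y_l}=0$, whose solution, unique up to an additive $y$-constant, is linear in the macroscopic data and is represented through the usual corrector functions; this determines $c_l^1$. Setting instead $\varphi_l^1\equiv0$, substituting the corrector representation of $c_l^1$, and averaging over $Y_l$ yield the homogenized diffusion tensor $\mathcal A_l$ and effective velocity $\hat H_M(\vect v_l)$ of \eqref{macro_diff_velocity}, hence equations \eqref{macro_c_a} and \eqref{macro_c_s}, with the external zero-flux and initial conditions inherited from \eqref{ExternalBC_N} and \eqref{init_cond}. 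The limiting transporter equations \eqref{macro_transporter_theta} are obtained by unfolding the ordinary differential equations \eqref{transporter_eq} and letting $\ve\to0$ with the strong convergence of $\mathcal{T}^\ve_{\Gamma}c_l^\ve$ and $\mathcal{T}^\ve_{\Gamma}\vartheta_{j,l}^\ve$, the macroscopic variable $x$ and the cell variable $y\in\Gamma_{zs}$ entering only as parameters.

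Finally, uniqueness of the macroscopic system I would establish by the same energy method used for the microscopic model: subtracting two solutions, testing \eqref{macro_c_a}, \eqref{macro_c_s} with the differences of concentrations, estimating the coupled transporter ordinary differential equations, controlling the linear dependence of $\vect v_l$ on $c_s-c_a$ through \eqref{coupled_macro}, \eqref{boundary_macro} and \eqref{Darcy_macro}, and closing with Gronwall's inequality; uniqueness then upgrades the subsequential convergence of Lemma~\ref{convergence_concentration} to convergence of the entire family. The main obstacle is the nonlinear convective coupling: one must pass to the limit in $H_M(\vect v_l^\ve)c_l^\ve$ where the velocity itself depends on the limiting concentration difference via the cell problems \eqref{coupled_macro}, \eqref{boundary_macro}, so the strong convergence of $c_l^\ve$ has to be combined with an a.e.\ (strong two-scale) convergence of the truncated velocities; the second, related, difficulty is that the boundary reaction products $\alpha_l^\ve c_l^\ve\vartheta_{f,l}^\ve$ on the oscillating surfaces $\Gamma_{zs}^\ve$ cannot be handled by weak two-scale limits alone and genuinely require the strong unfolding convergences established in Lemma~\ref{convergence_concentration}.
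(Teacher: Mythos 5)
Your proposal is correct and follows essentially the same route as the paper's proof: two-scale test functions $\psi_1+\ve\,\psi_2(\cdot,\cdot/\ve)$ in \eqref{def_apop}, \eqref{def_symp}, limit passage via Lemmata \ref{convergence_water} and \ref{convergence_concentration}, isolation of the cell problems by setting the macroscopic test function to zero, a corrector ansatz for $c_l^1$ linear in $\nabla c_l$ and $c_l$ yielding \eqref{macro_diff_velocity}, boundary unfolding of \eqref{transporter_eq} for the transporter equations, and uniqueness by the microscopic energy/Gronwall argument, which upgrades subsequential to whole-sequence convergence. The two difficulties you single out (the limit of $H_M(\vect{v}_l^\ve)c_l^\ve$ and of the surface products $\alpha_l^\ve c_l^\ve\vartheta_{f,l}^\ve$) are handled in the paper by the same devices you name --- strong convergence of $c_l^\ve$ and of the unfolded boundary sequences combined with the two-scale convergence of the truncated velocities and the structure of $H_M$ --- so your account is, if anything, more explicit about these steps than the paper's.
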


\begin{proof} 
To derive  macroscopic equations we shall apply two-scale and   strong convergences stated in Lemmata \ref{convergence_water} and \ref{convergence_concentration}. 
The Lipschitz continuity  of $F_l$ and the strong $L^2$-convergence  of $c_l^\ve$ imply
  $F^\ve_l(t,x, c_l^\ve) \to F_l(t,y, c_l)$ two-scale in $\Omega_T\times Y_l$, with $l=a,s$.
Taking  $\varphi(t,x)= \psi_1(t,x)+ \ve\psi_2(t,x, x/\ve)$ with $\psi_1\in L^2(0,T; H^1(\Omega))$ and $\psi_2 \in C^\infty_0(\Omega_T; C^\infty_{per}(Y))$  as a  test function in \eqref{def_apop} and \eqref{def_symp}, applying two-scale convergences and  strong convergence in $L^2(\Omega_T)$ of $c_l^\ve$, together with  two-scale convergences for $\vect{v}_l^\ve$, 
 and    $\vartheta_{j,l}^\ve$, where $j=f,b$ and  $l=a,s$, and  the  linearity of $H_M$,   considering appropriate subsequences and  passing to the limit as $\ve\to 0$,  imply
\begin{equation}\label{macro_a_0}
\begin{aligned}
  |Y_l|\langle\partial_t c_l, \psi_1 \rangle_{\Omega_T}
 + \langle D_l(t,y) (\nabla c_l + \nabla_y c^{1}_l) - H_M({\vect v}_{l}(t, x,y)) c_l, \nabla \psi_1 + \nabla_y \psi_2 \rangle_{\Omega_T\times Y_l}\\
 = \langle \beta_{l-1}(t,y) \vartheta_{b,l-1}
 - \alpha_l(t,y) c_{l} \, \vartheta_{f,l} , \psi_1 \rangle_{\Omega_T\times \Gamma_{zs}}  + \langle F_l(t,y, c_l),  \psi_1 \rangle_{\Omega_T\times Y_l} \; .
\end{aligned}
 \end{equation}
where $l=a,s$, with $a-1:=s$ and $s-1:=a$. 
Choosing now  $\psi_1=0$ in   \eqref{macro_a_0} 
we obtain 
\begin{eqnarray*}
 &&\langle  D_l(t,y) ( \nabla c_l + \nabla_y c_l^1)-  H_M({\vect v}_{l}) \,  c_l,  \nabla_y \psi_2\rangle_{\Omega\times Y_l,T}   =0\; , \qquad \text{ with } \, l=a,s\; ,
\end{eqnarray*}
for all $\psi_2 \in C^\infty_0(\Omega_T; C^\infty_{\text{per}}(Y))$  and can conclude that $c_l^1$ depends linearly on $\nabla c_l$ and $c_l$.
Thus we can consider an Ansatz
\begin{eqnarray*}
c_l^1(t,x,y)=\sum_{i=1}^3\frac{\partial c_l(t,x)}{\partial{x_i}} \omega^i_l(y) + c_l(t,x) z_l(t,x,y)\; ,
\end{eqnarray*}
where $l=a,s$,  and $\omega^i_l$  and $z_l$ are solutions of the cell problems 
\begin{eqnarray}\label{unit_diff}
\begin{cases}
 -\nabla_y\cdot (D_l(t,y) \nabla_y \omega^i_l) = \sum_{k=1}^3 \partial_{y_k} D_{l, ki} (t,y) &
 \text{ in } (0,T)\times Y_l\; , \\
\quad \, \, -D_l(t,y) \nabla_y \omega^i_l \cdot \vect{n} = \sum_{k=1}^3 D_{l,ki}(t,y)  \vect{n}_k  & \text{ in } (0,T)\times  (\partial Y_l\setminus\partial Y)\; , \\
\quad\qquad \omega_l^i  \   \text{ and} \  D_l(t,y) \nabla_y \omega^i_l \cdot \vect{n}|_{\partial Y_l \cap \partial Y} &  \text{ are }  \, Y-\text{periodic}\; ,
\end{cases}
\end{eqnarray}
and 
\begin{eqnarray}\label{unit_veloc}
\begin{cases}
 \nabla_y\cdot \big(D_l(t,y) \nabla_y z_l- H_M(\vect{v}_{l})\big) = \, \, 0 & \text{ in } \Omega_T\times Y_l,  \\ 
\,\ \big(D_l(t,y) \nabla_y z_l - H_M(\vect{v}_{l})\big) \cdot \vect{n} = \, \, 0  & \text{ on }  \Omega_T\times (\partial Y_l\setminus \partial Y)\; ,\\
\qquad z_l \text{ and }  D_l(t,y) \nabla_y z_l \cdot \vect{n}|_{\partial Y_l \cap \partial Y} & \text{ are } \,  Y-\text{periodic}\; .
\end{cases}
\end{eqnarray}
Next, setting  $\psi_2=0$  in \eqref{macro_a_0}  yields  macroscopic equations \eqref{macro_c_a}-\eqref{macro_c_s} for $c_s$ and $c_a$, where  homogenized diffusion matrices and macroscopic velocity fields are given by 
\begin{equation}\label{macro_diff_velocity}
\begin{aligned}
{\mathcal A}_{l, ij}(t)&= \frac 1 {|Y_l|}\int_{Y_l} \bigl( D_{l, ij}(t,y)+ \sum\limits_{k=1}^3 D_{l, ik}(t,y) \partial_{y_k} \omega_l^j(y)\bigr) dy \, \qquad \text{ for } l=a,s\; , \\
\hat H_M({\vect{v}}_{l} )_i(t,x)&=  \frac 1 {|Y_l|} \int_{Y_l} H_M({\vect v}_{l}(t,x, y))_i\,  dy -
 \frac 1 {|Y_l|} \int_{Y_l}\sum\limits_{k=1}^3 D_{l,ik}(t,y)\partial_{y_k} z_l(t,x,y)\,  dy \; , 
\end{aligned}
 \end{equation}
 whereas $w_l^j$ and $z_l$ are solutions of the unit cell problems \eqref{unit_diff} and \eqref{unit_veloc}.
 \\

Applying the boundary unfolding operator $\mathcal T^\ve_\Gamma$ to   \eqref{transporter_eq} and testing with $\psi \in L^2(\Omega_T\times\Gamma_{zs})$  give
\begin{align}\label{unfolded_receptors}
\begin{aligned}
\langle \partial_t \mathcal T^\ve_{\Gamma} (\vartheta_{f,l}^\ve), \psi\rangle_{\Omega\times\Gamma_{zs}, T}=
\langle   R_l(t,y,\mathcal T^\ve_{\Gamma} (\vartheta_{f,l}^\ve)), \psi\rangle_{\Omega\times\Gamma_{zs}, T}- 
\langle  \alpha_l(t,y) \mathcal T^\ve_{\Gamma}(\vartheta_{f,l}^\ve) \mathcal T^\ve_{\Gamma} (c^\ve_{l}), \psi\rangle_{\Omega\times\Gamma_{zs}, T}\\
 + \langle \beta_l(t,y) \mathcal T^\ve_{\Gamma} (\vartheta_{b,l}^\ve)- \gamma_{f,l}(t,y)\mathcal T^\ve_{\Gamma}( \vartheta_{f,l}^\ve), \psi\rangle_{\Omega\times\Gamma_{zs}, T}\; ,\\
 \langle \partial_t \mathcal T^\ve_{\Gamma}(\vartheta_{b,l}^\ve), \psi\rangle_{\Omega\times\Gamma_{zs}, T}=
\langle  \alpha_l(t,y) \mathcal T^\ve_{\Gamma}(\vartheta_{f,l}^\ve) \mathcal T^\ve_{\Gamma}(c^\ve_{l})- (\beta_l(t,y) + \gamma_{b,l}(t,y))\mathcal T^\ve_{\Gamma} (\vartheta_{b,l}^\ve), \psi\rangle_{\Omega\times\Gamma_{zs}, T}\; .
\end{aligned}
\end{align}

Considering the strong convergence of $\mathcal T^\ve_{\Gamma}(\vartheta_{f,l}^\ve)$ and $ \mathcal T^\ve_{\Gamma}(c^\ve_{l})$ stated in \eqref{conc-conver}, the equivalence between two-scale convergence and the weak convergence of the unfolded sequence, and  Lipschitz continuity of $R_l$, with $l=a,s$, we can pass in \eqref{unfolded_receptors} to the limit as $\ve\to 0$ and obtain equations  \eqref{macro_transporter_theta}. 
The assumption on the initial data, the similar arguments as in the proof of Lemma~\ref{convergence_concentration}, 
the  two-scale convergence  of $c_l^\ve$,  $\vartheta^\ve_{j,l}$ and of their time derivatives ensure that the initial conditions for $c_l$ and $\vartheta_{j,l}$,  with $l=a,s$ and $j=f,b$,  are satisfied  in  $L^2(\Omega)$ and in $L^2(\Omega\times \Gamma_{zs})$, respectively. The proof of the uniqueness follows along the same lines as for the microscopic model and  implies   the convergence of the entire sequence of  solutions of the microscopic problems.
\end{proof}

\section*{Appendix} We recall here the definition of the two-scale convergence and  unfolding operator. 
\begin{definition}\label{two-scale-conver}\cite{allaire, Nguetseng}
 A sequence $\{u^{\varepsilon}\}\subset L^2((0,T) \times \Omega)$ is said to two-scale converge to a  limit $u_0 \in L^2((0,T)\times \Omega \times Y)$ iff for any $\phi \in L^2((0,T)\times \Omega, C_{per}(Y))$ we have
\[
\lim_{\varepsilon \rightarrow 0}\int_{0}^T\int_{\Omega}u^{\varepsilon}(t,x)\phi(t,x,x/{\varepsilon})dx dt=\int_{0}^T\int_{\Omega}\int_Y u_0(t,x,y)\phi(t,x,y)dy dx dt\; .
\]
\end{definition} 
\begin{theorem}\cite{allaire, Nguetseng}
From each bounded sequence $\{u^{\varepsilon}\}$ in $L^2((0,T)\times \Omega)$ we can extract a subsequence, which two-scale converges to $u_0 \in L^2((0,T)\times \Omega \times Y)$
\end{theorem}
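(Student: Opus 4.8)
The plan is to follow the classical functional-analytic argument of Nguetseng and Allaire, realising the two-scale limit as the Riesz representative of a weak-$*$ limit of oscillating linear functionals. First I would fix the separable Banach space $X = L^2((0,T)\times\Omega; C_{per}(Y))$ of admissible test functions and, for each $\varepsilon$, define the linear functional $\mu_\varepsilon \in X'$ by $\mu_\varepsilon(\phi) = \int_0^T\int_\Omega u^\varepsilon(t,x)\phi(t,x,x/\varepsilon)\,dx\,dt$. The immediate bound is $|\mu_\varepsilon(\phi)| \le \|u^\varepsilon\|_{L^2((0,T)\times\Omega)}\, \|\phi(\cdot,\cdot,\cdot/\varepsilon)\|_{L^2((0,T)\times\Omega)}$, and since $|\phi(t,x,x/\varepsilon)| \le \sup_{y\in Y}|\phi(t,x,y)|$, the second factor is controlled by $\|\phi\|_X$. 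Combined with the uniform bound $\|u^\varepsilon\|_{L^2}\le C$, this shows that $\{\mu_\varepsilon\}$ is bounded in $X'$.

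Second, because $X$ is separable, bounded sequences in $X'$ admit weak-$*$ convergent subsequences (sequential Banach--Alaoglu), so I can extract a subsequence, not relabelled, along which $\mu_\varepsilon \rightharpoonup^* \mu$ for some $\mu\in X'$; concretely this is a diagonal argument over a countable dense subset of $X$, upgraded to all of $X$ by the uniform bound. The key analytic input is the mean-value property: for an admissible test function the map $(t,x)\mapsto \phi(t,x,x/\varepsilon)$ is measurable and satisfies $\int_0^T\int_\Omega |\phi(t,x,x/\varepsilon)|^2\,dx\,dt \to \int_0^T\int_\Omega\int_Y |\phi(t,x,y)|^2\,dy\,dx\,dt$ as $\varepsilon\to 0$, since $|\phi|^2$ is again admissible and $C_{per}(Y)$-functions average to their mean under the rescaling. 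Feeding this into the bound on $\mu_\varepsilon$ yields the sharpened estimate $|\mu(\phi)| \le C\,\|\phi\|_{L^2((0,T)\times\Omega\times Y)}$.

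Third, I would use this sharpened estimate to extend $\mu$ to the whole Hilbert space $L^2((0,T)\times\Omega\times Y)$: since finite sums of tensor products of $L^2$-functions in $(t,x)$ with elements of $C_{per}(Y)$ are dense in $L^2((0,T)\times\Omega\times Y)$ (as $C_{per}(Y)$ is dense in $L^2(Y)$), $\mu$ is a densely-defined bounded functional and extends uniquely by continuity. By the Riesz representation theorem there is a unique $u_0\in L^2((0,T)\times\Omega\times Y)$ with $\mu(\phi) = \int_0^T\int_\Omega\int_Y u_0\,\phi\,dy\,dx\,dt$ for all admissible $\phi$. Reading off the definition of two-scale convergence, the relation $\lim_\varepsilon\mu_\varepsilon(\phi)=\mu(\phi)$ for every $\phi\in X$ is exactly the assertion that $u^\varepsilon$ two-scale converges to $u_0$ along the subsequence.

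I expect the main obstacle to be the rigorous justification of the mean-value property together with the measurability of $(t,x)\mapsto\phi(t,x,x/\varepsilon)$ for merely $L^2$-in-$(t,x)$, $C_{per}$-in-$y$ test functions; this is the only genuinely nontrivial step, and it is established by approximating $\phi$ in the $X$-norm by finite sums $\sum_k a_k(t,x) b_k(y)$ with $a_k\in C((0,T)\times\overline\Omega)$ and $b_k\in C_{per}(Y)$, applying the classical weak convergence $b_k(\cdot/\varepsilon)\rightharpoonup \frac1{|Y|}\int_Y b_k\,dy$ to each term, and passing to the limit via the density of such sums. Everything else --- the weak-$*$ extraction and the Riesz step --- is routine once this oscillation lemma is in hand.
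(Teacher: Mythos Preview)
Your argument is the standard Nguetseng--Allaire proof and is essentially correct; the paper, however, does not prove this statement at all---it is quoted in the Appendix as a known compactness result, with references to \cite{allaire,Nguetseng} and no accompanying proof. So there is nothing in the paper to compare against beyond noting that your sketch reproduces what one finds in those cited sources.

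One minor point worth tightening: in the mean-value step you write $\int_0^T\int_\Omega |\phi(t,x,x/\varepsilon)|^2\,dx\,dt \to \int_0^T\int_\Omega\int_Y |\phi|^2\,dy\,dx\,dt$, which tacitly uses $|Y|=1$ (consistent with the paper's unit-cell convention but not stated explicitly in your write-up). Also, the ``admissibility'' of $\phi\in L^2((0,T)\times\Omega;C_{per}(Y))$---i.e.\ that $(t,x)\mapsto\phi(t,x,x/\varepsilon)$ is measurable and the $L^2$-norm limit holds---is indeed the only nontrivial analytic ingredient, and you correctly isolate it and outline the tensor-product density argument that handles it. With those caveats the proposal is sound.
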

\begin{theorem}\label{compact1}\cite{allaire, Nguetseng}
1. Let $\{u^{\varepsilon}\}$  be a bounded sequence in  $L^2(0,T; H^{1}(\Omega))$, which converges weakly to $u \in L^2(0,T; H^{1}(\Omega))$. Then, there exists $u_1 \in L^2((0,T)\times\Omega; H^{1}_{per}(Y)/\mathbb R)$ such that, up to a subsequence, $u^{\varepsilon}$ two-scale converges to $u$ and $\nabla u^{\varepsilon}$ two-scale converges to $\nabla u+\nabla_y u_1$.\\
\indent 2. Let $\{u^{\varepsilon}\}$ and $\{{\varepsilon} \nabla u^{\varepsilon}\}$  be  bounded sequences in  $L^2((0,T)\times \Omega)$. Then, there exists $u_0 \in L^2((0,T)\times \Omega, H^{1}_{per}(Y)/\mathbb R)$ such that, up to a subsequence, $u^{\varepsilon}$ and ${\varepsilon} \nabla u^{\varepsilon}$ two-scale converge to $u_0$ and $\nabla_y u_0$, respectively.
\end{theorem}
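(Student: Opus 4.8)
The plan is to obtain both statements from the basic two-scale compactness theorem (the immediately preceding theorem) by a single integration-by-parts device applied to oscillating test fields. Throughout I write $\Omega_T=(0,T)\times\Omega$, I abbreviate $\phi^\varepsilon(t,x)=\phi(t,x,x/\varepsilon)$ for $\phi\in C^\infty_0(\Omega_T;C^\infty_{per}(Y))^3$, and I use the bracket notation of the paper, $\langle\cdot,\cdot\rangle_{\Omega,T}=\int_0^T\!\int_\Omega$ and $\langle\cdot,\cdot\rangle_{\Omega\times Y,T}=\int_0^T\!\int_\Omega\!\int_Y$. The elementary chain rule identity I will use repeatedly is $\operatorname{div}_x[\phi(t,x,x/\varepsilon)]=(\operatorname{div}_x\phi)^\varepsilon+\varepsilon^{-1}(\operatorname{div}_y\phi)^\varepsilon$.

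I would treat Part~2 first, as it is the simpler building block. Since $\{u^\varepsilon\}$ and $\{\varepsilon\nabla u^\varepsilon\}$ are bounded in $L^2(\Omega_T)$, the basic compactness theorem yields a subsequence along which $u^\varepsilon\to u_0$ and $\varepsilon\nabla u^\varepsilon\to\chi_0$ two-scale, with $u_0,\chi_0\in L^2(\Omega_T\times Y)$. Testing $\varepsilon\nabla u^\varepsilon$ against $\phi^\varepsilon$ and integrating by parts in $x$ gives
\[
\langle \varepsilon\nabla u^\varepsilon,\phi^\varepsilon\rangle_{\Omega,T}=-\varepsilon\langle u^\varepsilon,(\operatorname{div}_x\phi)^\varepsilon\rangle_{\Omega,T}-\langle u^\varepsilon,(\operatorname{div}_y\phi)^\varepsilon\rangle_{\Omega,T}.
\]
Passing to the two-scale limit, the prefactor $\varepsilon$ kills the first term on the right and one obtains $\langle\chi_0,\phi\rangle_{\Omega\times Y,T}=-\langle u_0,\operatorname{div}_y\phi\rangle_{\Omega\times Y,T}$ for all admissible $\phi$. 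This is precisely the weak statement that $\chi_0=\nabla_y u_0$ in $Y$ for a.e.\ $(t,x)$; hence $u_0(t,x,\cdot)\in H^1_{per}(Y)$ with $\nabla_y u_0=\chi_0\in L^2$, and $u_0\in L^2(\Omega_T;H^1_{per}(Y)/\mathbb R)$, proving Part~2.

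For Part~1 I would proceed in three steps. Boundedness of $\{u^\varepsilon\}$ in $L^2(0,T;H^1(\Omega))$ makes both $\{u^\varepsilon\}$ and $\{\nabla u^\varepsilon\}$ bounded in $L^2(\Omega_T)$, so up to a subsequence $u^\varepsilon\to u_0$ and $\nabla u^\varepsilon\to\xi_0$ two-scale. First, testing $\nabla u^\varepsilon$ against $\varepsilon\phi^\varepsilon$, integrating by parts and letting $\varepsilon\to0$ (the left side and the $\operatorname{div}_x$ term carry a factor $\varepsilon$ and vanish) yields $\langle u_0,\operatorname{div}_y\phi\rangle_{\Omega\times Y,T}=0$, whence $\nabla_y u_0=0$ and $u_0=u_0(t,x)$ is independent of $y$; comparing against $y$-independent test functions and using $u^\varepsilon\rightharpoonup u$ then identifies $u_0$ with $u$. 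Second, I test $\nabla u^\varepsilon$ against a field $\phi^\varepsilon$ with $\operatorname{div}_y\phi=0$: the $\varepsilon^{-1}$ term in the integration by parts drops out, and passing to the limit gives $\langle\xi_0,\phi\rangle_{\Omega\times Y,T}=-\langle u,\operatorname{div}_x\phi\rangle_{\Omega\times Y,T}=\langle\nabla_x u,\phi\rangle_{\Omega\times Y,T}$, the last equality by integration by parts in $x$ (licit since $u\in L^2(0,T;H^1(\Omega))$ and $\phi$ is compactly supported in $\Omega$). Thus $\langle\xi_0-\nabla_x u,\phi\rangle_{\Omega\times Y,T}=0$ for every $y$-divergence-free $\phi$.

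The final and most delicate step is to convert this orthogonality into the announced representation $\xi_0=\nabla_x u+\nabla_y u_1$. The key ingredient is the periodic Helmholtz--de Rham decomposition: in $L^2_{per}(Y)^3$ the orthogonal complement of the $y$-divergence-free periodic fields is exactly the space of periodic gradients $\nabla_y\bigl(H^1_{per}(Y)/\mathbb R\bigr)$. Applying this fibrewise to $\xi_0(t,x,\cdot)-\nabla_x u(t,x)$ produces, for a.e.\ $(t,x)$, a unique $u_1(t,x,\cdot)\in H^1_{per}(Y)/\mathbb R$ with $\xi_0-\nabla_x u=\nabla_y u_1$. I expect this decomposition-plus-measurable-selection to be the main obstacle: one must check that the resulting $u_1$ depends measurably on the slow variables and lies in $L^2(\Omega_T;H^1_{per}(Y)/\mathbb R)$, which follows because the solution operator of the cell-wise projection is linear and bounded, giving the pointwise bound $\|\nabla_y u_1(t,x,\cdot)\|_{L^2(Y)}\le\|\xi_0(t,x,\cdot)-\nabla_x u(t,x)\|_{L^2(Y)}$ and hence square-integrability in $(t,x)$. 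The integration-by-parts manipulations preceding it are entirely routine; it is here that periodicity, the quotient by $y$-constants, and joint measurability all genuinely enter.
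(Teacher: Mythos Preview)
The paper does not prove this theorem: it is stated in the Appendix with the citation \cite{allaire, Nguetseng} and no argument is given, as it is a standard compactness result from the two-scale convergence literature. Your proof is the classical one due to Allaire and Nguetseng---the integration-by-parts device against oscillating test fields, followed in Part~1 by orthogonality to $y$-divergence-free fields and the periodic Helmholtz decomposition---so there is nothing to compare; you have supplied precisely the argument the cited references contain.
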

\begin{definition} \label{two-scale-boundary}\cite{neuss-radu}
A sequence  $\{w^{\varepsilon}\}\subset L^2((0,T) \times {\Gamma}^{\varepsilon}_{zs})$ is said to two-scale converge to a  limit $w \in L^2((0,T)\times\Omega \times \Gamma_{zs})$ iff for every $\psi \in L^2((0,T) \times \Omega; C_{per}(\Gamma_{zs}))$ we have
\[
\lim_{\varepsilon \rightarrow 0} \, \varepsilon \int_0^T\int_{{\Gamma}^{\varepsilon}_{zs}}w^{\varepsilon}(t,x)\psi(t,x, x/\varepsilon)d\gamma_x dt=\int_{0}^T\int_{\Omega}\int_{\Gamma_{zs}} w(t,x,y)\psi(t,x,y) d\gamma_y  dxdt\; .
\]
\end{definition}
\begin{theorem}\label{th_comp_b}
1. \cite{neuss-radu} For each  sequence $\{w^{\varepsilon}\}\subset L^2((0,T) \times {\Gamma}^{\varepsilon}_{zs})$
with $\ve^{\frac 12}\|w^{\varepsilon}\|_{L^2((0,T) \times {\Gamma}^{\varepsilon}_{zs})}$ bounded  uniformly in $\ve$,
there exists a   subsequence and  $w \in L^2((0,T)\times \Omega \times \Gamma_{zs})$ such that the subsequence  two-scale converges to $w$.\\
 2. \cite{AnnaMariya2008} If $\{w^{\varepsilon}\}$ is bounded in $L^{\infty}((0,T) \times \Gamma^{\varepsilon}_{zs})$, then the limit $w\in L^{\infty}((0,T)\times \Omega \times \Gamma_{zs})$.
\end{theorem}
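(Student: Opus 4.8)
The plan is to treat both parts by duality, with a single periodic surface-averaging lemma as the key ingredient; everything else reduces to separability, a diagonal extraction, the Riesz representation theorem, and $L^1$--$L^\infty$ duality. The lemma I would record first states that for $\phi \in L^2((0,T)\times\Omega; C_{per}(\Gamma_{zs}))$ one has
\[
\ve\int_0^T\int_{\Gamma^\ve_{zs}}\phi(t,x,x/\ve)^2\,d\gamma_x\,dt \longrightarrow \int_0^T\int_\Omega\int_{\Gamma_{zs}}\phi(t,x,y)^2\,d\gamma_y\,dx\,dt,
\]
and likewise with $\phi$ (or $|\phi|$) in place of $\phi^2$; this encodes the scaling $\ve|\Gamma^\ve_{zs}|\to|\Omega|\,|\Gamma_{zs}|$ of the oscillating surface measure, $Y$ being the unit cell. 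It is obtained cell by cell from the $Y$-periodicity of $\Gamma_{zs}$, the cells meeting $\partial\Omega$ contributing $O(\ve)$ after the $\ve$-rescaling since there are only $O(\ve^{-2})$ of them. Once this lemma is available, the two assertions are routine.

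\emph{Part 1.} For each admissible $\psi \in L^2((0,T)\times\Omega; C_{per}(\Gamma_{zs}))$ I define the linear functional
\[
L_\ve(\psi) = \ve\int_0^T\int_{\Gamma^\ve_{zs}} w^\ve(t,x)\,\psi(t,x,x/\ve)\,d\gamma_x\,dt .
\]
By the Cauchy--Schwarz inequality on $\Gamma^\ve_{zs}$,
\[
|L_\ve(\psi)| \le \ve^{\frac12}\|w^\ve\|_{L^2((0,T)\times\Gamma^\ve_{zs})}\,\Big(\ve\|\psi(\cdot,\cdot,\cdot/\ve)\|^2_{L^2((0,T)\times\Gamma^\ve_{zs})}\Big)^{\frac12},
\]
so the averaging lemma applied to $\psi^2$ gives $\limsup_{\ve\to0}|L_\ve(\psi)|\le C\,\|\psi\|_{L^2((0,T)\times\Omega\times\Gamma_{zs})}$, where $C$ is the uniform bound on $\ve^{1/2}\|w^\ve\|_{L^2((0,T)\times\Gamma^\ve_{zs})}$. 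Since $L^2((0,T)\times\Omega; C_{per}(\Gamma_{zs}))$ is separable, I fix a countable dense family $\{\psi_k\}$ and extract by a diagonal argument a subsequence along which $L_\ve(\psi_k)$ converges for every $k$; the uniform bound then shows $\{L_\ve(\psi)\}$ is Cauchy in $\ve$ for every admissible $\psi$, so $L(\psi):=\lim_\ve L_\ve(\psi)$ exists and is a bounded linear functional on $L^2((0,T)\times\Omega\times\Gamma_{zs})$ with $|L(\psi)|\le C\|\psi\|$. The Riesz representation theorem supplies $w\in L^2((0,T)\times\Omega\times\Gamma_{zs})$ with $L(\psi)=\int_0^T\!\int_\Omega\!\int_{\Gamma_{zs}} w\,\psi$, which is precisely two-scale convergence in the sense of Definition~\ref{two-scale-boundary}.

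\emph{Part 2.} Assuming in addition $\|w^\ve\|_{L^\infty((0,T)\times\Gamma^\ve_{zs})}\le C$, I bound each functional by the $L^1$-norm of the test function:
\[
|L_\ve(\psi)| \le C\,\ve\int_0^T\int_{\Gamma^\ve_{zs}}|\psi(t,x,x/\ve)|\,d\gamma_x\,dt \longrightarrow C\int_0^T\int_\Omega\int_{\Gamma_{zs}}|\psi|\,d\gamma_y\,dx\,dt,
\]
using the averaging lemma for $|\psi|$ (still continuous and periodic in $y$). Passing to the limit and invoking Part 1 gives $\big|\int_0^T\!\int_\Omega\!\int_{\Gamma_{zs}} w\,\psi\big|\le C\|\psi\|_{L^1((0,T)\times\Omega\times\Gamma_{zs})}$ for all $\psi$ in a set dense in $L^1((0,T)\times\Omega\times\Gamma_{zs})$, hence for all $\psi\in L^1$ by density. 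The duality $L^\infty=(L^1)^\ast$ then yields $\|w\|_{L^\infty((0,T)\times\Omega\times\Gamma_{zs})}\le C$.

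The main obstacle is the averaging lemma itself, namely making rigorous the convergence of the rescaled surface integrals for general test functions that are merely $L^2$ in $x$ and only continuous in $y$, and in particular controlling the cells that intersect $\partial\Omega$, where the periodic copies of $\Gamma_{zs}$ need not lie inside $\Omega$. I would prove it first for $\psi\in C((0,T)\times\overline\Omega; C_{per}(\Gamma_{zs}))$ by a Riemann-sum/cell-decomposition argument and then extend to $L^2((0,T)\times\Omega; C_{per}(\Gamma_{zs}))$ by density, the density step being licensed exactly by the uniform surface-measure bound $\ve|\Gamma^\ve_{zs}|\le C$ recorded above. All the downstream steps---separability, diagonal extraction, Riesz representation, and the $L^1$--$L^\infty$ duality---are standard once this lemma is in hand.
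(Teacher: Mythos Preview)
The paper does not prove this theorem at all: it is stated in the Appendix as a known result, with Part~1 attributed to Neuss-Radu and Part~2 to Marciniak-Czochra and Ptashnyk, and no argument is given. Your proposal therefore cannot be compared to ``the paper's own proof'' in the usual sense.

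That said, your sketch is correct and is essentially the argument one finds in the cited sources. The surface-averaging lemma you isolate is exactly the analogue, for oscillating boundaries, of the classical bulk averaging lemma that drives Nguetseng--Allaire two-scale compactness; once it is in place, the Cauchy--Schwarz bound, separability plus diagonal extraction, and Riesz representation are the standard machinery. Your treatment of Part~2 via the $L^1$--$L^\infty$ duality is likewise the argument in \cite{AnnaMariya2008}. One small point worth tightening in Part~1: after extracting a subsequence along which $L_\ve(\psi_k)$ converges for each $\psi_k$ in the dense family, the passage to convergence for arbitrary $\psi$ is a $3\varepsilon$ argument combining density with the \emph{uniform-in-$\ve$} operator bound $|L_\ve(\psi)|\le C'\|\psi\|_{L^2}$ (valid for $\ve$ small, by the averaging lemma); you state this but the phrase ``is Cauchy in $\ve$'' slightly obscures that it is the uniform bound, not merely the $\limsup$ bound, that is needed here. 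Your remarks on the boundary-cell contribution and the density extension of the averaging lemma are accurate and identify the only genuinely technical step.
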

\begin{definition}\label{def_unfolding} \cite{Cioranescu1}
 1. For any function $\phi$ Lebesgue-measurable on the perforated domain $\Omega_l^\varepsilon$,
 the unfolding operator
$\mathcal T^\varepsilon_{Y_l}: \Omega^\varepsilon_l \to \Omega\times Y_l$, $l=a,s$, is defined by
$$
\mathcal T^\varepsilon_{Y_l}(\phi)(x,y)=
\begin{cases}
\phi(\varepsilon \big[\frac{x}{\varepsilon}\big]_Y+\varepsilon y) &
\text{ a.e. for } \; y\in Y_l, \,
  x\in {\tilde \Omega^\varepsilon_{int}}\; , \\
0  & \text{ a.e. for } \, y\in Y_l, \, x \in \Omega \setminus \tilde \Omega^\varepsilon_{int}\; ,
\end{cases}
$$
where $k:=[\frac{x}{\varepsilon}]$ denotes the unique integer combination, such that $x-[\frac{x}{\varepsilon}]$ belongs to $Y_l$,
and $\tilde \Omega_{int}^\ve = \text{Int}(\cup_{k\in \mathbb Z^3}\{\ve \overline{Y^k}, \ve Y^k \subset \Omega\})$. We note that for $w\in H^1(\Omega)$ it holds that
$\mathcal{T}_{Y_l}^\varepsilon(w|_{\Omega_l^\varepsilon}) = \mathcal{T}^\varepsilon_Y(w)|_{\Omega\times Y_l}$.\\
2.  For any function $\phi$ Lebesgue-measurable on oscillating boundary $\Gamma^\varepsilon_l$, $l=zs, a, s$,
the boundary unfolding operator
$\mathcal T^\varepsilon_{\Gamma_l}: \Gamma^\varepsilon_l \to \Omega\times \Gamma_l$,  is defined by
$$\mathcal T^\varepsilon_{\Gamma_l}(\phi)(x,y)=
\begin{cases}
\phi(\varepsilon \big [\frac{x}{\varepsilon}\big]_Y+\varepsilon y) & \text{ a.e. for }
 \; y\in \Gamma_l, \, x\in \tilde \Omega^\varepsilon_{int}\; ,  \\
0  & \text{ a.e. for } \, y\in \Gamma_l, \, x \in \Omega \setminus \tilde \Omega^\varepsilon_{int}\; .
\end{cases}
 $$
\end{definition}
\begin{lemma}\label{two-scale-unfold}\cite{AnnaMariya2008}
If   $\{\psi^\ve\}\subset L^2((0,T)\times \Gamma_{zs}^\ve)$ converges two-scale to $\psi \in L^2((0,T)\times\Omega\times \Gamma_{zs})$
and $\{\mathcal T_{\Gamma_{zs}}^\ve (\psi^\ve)\}\subset L^2((0,T)\times\Omega\times \Gamma_{zs})$ converges weakly to $\psi^\ast$ in $L^2((0,T)\times\Omega\times \Gamma_{zs})$, 
then $\psi=\psi^\ast$ a.e. in $(0,T)\times\Omega\times \Gamma_{zs}$.
\end{lemma}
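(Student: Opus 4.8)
The plan is to bridge the two notions of limit through the exact integration formula satisfied by the boundary unfolding operator, and then to compare the resulting value with the defining formula of two-scale convergence on $\Gamma^\ve_{zs}$. Throughout I fix a test function $\Phi \in L^2((0,T)\times\Omega; C_{per}(\Gamma_{zs}))$ and write $\Phi^\ve(t,x):=\Phi(t,x,x/\ve)$. Two standard properties of $\mathcal T^\ve_{\Gamma_{zs}}$, taken from \cite{Cioranescu1, AnnaMariya2008}, are needed. First, the multiplicativity $\mathcal T^\ve_{\Gamma_{zs}}(v^\ve w^\ve)=\mathcal T^\ve_{\Gamma_{zs}}(v^\ve)\,\mathcal T^\ve_{\Gamma_{zs}}(w^\ve)$ together with the change of variables $x=\ve\big[\tfrac{x}{\ve}\big]_Y+\ve y$, $d\gamma_x=\ve^2 d\gamma_y$, yields the boundary integration formula
\[
\ve\int_0^T\!\!\int_{\Gamma^\ve_{zs}} \psi^\ve\,\Phi^\ve\,d\gamma_x\,dt
=\frac{1}{|Y|}\int_0^T\!\!\int_{\Omega}\!\!\int_{\Gamma_{zs}} \mathcal T^\ve_{\Gamma_{zs}}(\psi^\ve)\,\mathcal T^\ve_{\Gamma_{zs}}(\Phi^\ve)\,d\gamma_y\,dx\,dt+r^\ve,
\]
where $r^\ve$ collects the contribution of the cells meeting $\partial\Omega$. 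Second, since $\Phi$ is continuous and $Y$-periodic in $y$ and $\ve\big[\tfrac{x}{\ve}\big]_Y+\ve y\to x$, one has $\mathcal T^\ve_{\Gamma_{zs}}(\Phi^\ve)\to\Phi$ strongly in $L^2((0,T)\times\Omega\times\Gamma_{zs})$.

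Next I would pass to the limit on the right-hand side. The two-scale convergence of $\{\psi^\ve\}$ on the oscillating boundary forces the bound $\ve^{1/2}\|\psi^\ve\|_{L^2((0,T)\times\Gamma^\ve_{zs})}\le C$ (this is the bound underlying Theorem~\ref{th_comp_b}), and the boundary-layer measure being $O(\ve)$ gives, by Cauchy--Schwarz, $|r^\ve|\le C\,\ve^{1/2}\|\Phi^\ve\|_{L^2((\Gamma^\ve_{zs}\cap\text{layer}))}\to 0$. Combining the weak convergence $\mathcal T^\ve_{\Gamma_{zs}}(\psi^\ve)\rightharpoonup\psi^\ast$ with the strong convergence $\mathcal T^\ve_{\Gamma_{zs}}(\Phi^\ve)\to\Phi$ via the weak-times-strong product rule, the integration formula gives
\[
\lim_{\ve\to0}\ve\int_0^T\!\!\int_{\Gamma^\ve_{zs}} \psi^\ve\,\Phi^\ve\,d\gamma_x\,dt
=\frac{1}{|Y|}\int_0^T\!\!\int_{\Omega}\!\!\int_{\Gamma_{zs}} \psi^\ast\,\Phi\,d\gamma_y\,dx\,dt.
\]
On the other hand, Definition~\ref{two-scale-boundary} evaluates exactly the same limit as $\int_0^T\!\int_\Omega\!\int_{\Gamma_{zs}}\psi\,\Phi\,d\gamma_y\,dx\,dt$. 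Equating the two expressions yields, for every admissible $\Phi$,
\[
\int_0^T\!\!\int_{\Omega}\!\!\int_{\Gamma_{zs}}\Big(\psi-\tfrac{1}{|Y|}\psi^\ast\Big)\,\Phi\,d\gamma_y\,dx\,dt=0,
\]
and the normalising constant is fixed once and for all by testing with the constant sequence; under the unit reference cell used here it equals $1$, so that the identity reduces to $\psi=\psi^\ast$.

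Finally, since functions of the form $\Phi\in L^2((0,T)\times\Omega; C_{per}(\Gamma_{zs}))$ are dense in $L^2((0,T)\times\Omega\times\Gamma_{zs})$ (continuous $Y$-periodic functions are dense in $L^2(\Gamma_{zs})$, and their tensor products span a dense subspace), the vanishing of the pairing against all such $\Phi$ forces $\psi=\psi^\ast$ a.e.\ in $(0,T)\times\Omega\times\Gamma_{zs}$. I expect the main obstacle to be the rigorous justification of the integration formula together with the vanishing of its boundary-layer remainder $r^\ve$, and the careful reconciliation of the volume normalisation between the unfolding identity (which carries a factor $1/|Y|$) and Definition~\ref{two-scale-boundary}; once these bookkeeping points are settled, the weak-times-strong passage and the density argument are routine.
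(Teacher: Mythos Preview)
The paper does not supply a proof of this lemma; it simply cites \cite{AnnaMariya2008}. Your argument is the standard one from that reference: pair the exact integration identity for the boundary unfolding operator with the weak-times-strong product rule, and compare with Definition~\ref{two-scale-boundary}. This is correct.

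Two minor comments. First, the boundary-layer remainder $r^\ve$ is in fact zero here: the paper assumes $\Omega$ is a cube exactly tiled by $\ve$-cells (see the beginning of Section~\ref{MicroModel}, where $\Omega=\cup_{j=1}^J \ve Y^j$), so the set $\Omega\setminus\tilde\Omega^\ve_{int}$ is empty and no estimate on $r^\ve$ is needed. Second, your handling of the normalisation is right but the justification ``testing with the constant sequence'' is a little oblique; it is cleaner simply to note that $Y$ is the \emph{unit} cell, so $|Y|=1$, which makes the factor disappear. For the strong convergence $\mathcal T^\ve_{\Gamma_{zs}}(\Phi^\ve)\to\Phi$ you should restrict first to $\Phi\in C_0^\infty((0,T)\times\Omega; C_{per}(\Gamma_{zs}))$ (so that continuity in $x$ is available) and then invoke density at the end, exactly as you already do.
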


{\it Acknowledgement}
 The first author was funded by the German Research Foundation [grant number CH 958/1-1].

\end{document}